\newtheorem{theorem}{Theorem}[section]
\newtheorem{hypothesis}{Hypothesis}[section]
\newtheorem{corollary}{Corollary}[section]
\newtheorem{lemma}[theorem]{lemma}
\newtheorem{definition}{Definition}[section]
\newtheorem{proposition}{Proposition}[section]
\title{Deep neural network solutions for oscillatory Fredholm integral equations
\thanks{\textit{\underline{Citation}}: 
\textbf{Authors. Title. Pages.... DOI:000000/11111.}} 
}
\author{
  Jie Jiang \\
  School of Computer Science and Engineering\\
  Sun Yat-sen University \\
  Guangzhou\\
  \texttt{jiangj73@mail2.sysu.edu.cn} \\
   \And
  Yuesheng Xu \\
  Department of Mathematics and Statistics \\
  Old Dominion University \\
  Norfolk\\
  \texttt{y1xu@odu.edu} \\
}
\begin{document}
\maketitle

\begin{abstract}
We studied the use of deep neural networks (DNNs) in the numerical solution of the oscillatory Fredholm integral equation of the second kind. It is known that the solution of the equation exhibits certain oscillatory behaviors due to the oscillation of the kernel. It was pointed out recently that standard DNNs favour low frequency functions, and as a result, they often produce poor approximation for functions containing high frequency components. We addressed this issue in this study. We first developed a numerical method for solving the equation with DNNs as an approximate solution by designing a numerical quadrature that tailors to computing oscillatory integrals involving DNNs. We proved that the error of the DNN approximate solution of the equation is bounded by the training loss and the quadrature error.
We then proposed a multi-grade deep learning (MGDL) model to overcome the spectral bias issue of neural networks. Numerical experiments demonstrate that the MGDL model is effective in extracting multiscale information of the oscillatory solution and overcoming the spectral bias issue from which a standard DNN model suffers. 
\end{abstract}

\keywords{oscillatory Fredholm integral equation \and deep neural network \and spectral bias}

\section{Introduction}
The goal of this paper is to develop an effective deep neural network (DNN) method for the numerical solution of the oscillatory Fredholm integral equation of the second kind. The integral equation has wide applications in physics and engineering, such as  
electromagnetic scattering \cite{2010The, 2003Inverse}. Even though the numerical solution for the equation is a classical research topic  \cite{Atkinson, 2010The, 2015Multiscale, wang2015oscillation, Xiang2023}, it is desirable to investigate how DNNs can be effectively used to represent its numerical solution, given the recent success   of the use of DNNs in the numerical solution of partial differential equations \cite{raissi2018deep, raissi2019physics}  and variational problems \cite{2018The}.

The solution of the oscillatory Fredholm integral equation of the second kind exhibits certain oscillation due to the oscillation of its kernel, see \cite{wang2015oscillation}. In other words, the solution of the equation contains high frequency components as a result of its involvement of an oscillatory kernel. A similar circumstance also exists in the solution of the oscillatory Volterra integral equation \cite{Brunner2015}. This fact places a barrier for using DNNs as an approximate solution of the equation because of the spectral bias phenomenon of neural networks. The spectral bias phenomenon was discovered in \cite{Rahaman2019}, which showed that while neural networks can approximate arbitrary functions, they favour low frequency ones, and as a result, they exhibit a bias towards smooth functions. It is the aim of this paper to investigate how to overcome the spectral bias barrier.

Because of the nonlinearity of DNNs, solving the integral equation for a DNN approximate solution boils down to solve a non-convex optimization problem by an iterative scheme. Every step of the iteration requires to evaluate the objective function which involves oscillatory integrals. Hence, it is inevitable to compute oscillatory integrals at every step of the iteration. There is a large literature on the numerical quadrature of oscillatory integrals. Computing integrals of the product of a smooth non-oscillatory function and a structured oscillatory kernel function has a long history in numerical analysis. For example, computing integrals involving the Fourier kernel, the Bessel kernel and the Gauss kernel was studied in \cite{Arieh2005Efficient, 1960A},  \cite{2004Stability, 2011Asymptotic} and \cite{MaXu2018}, respectively.
Existing quadrature methods for oscillatory integrals may be divided into four major categories: asymptotic methods \cite{Arieh2005Efficient}, Filon-type methods \cite{1960A, 2015Computing, MaXu2018}, Levin-type methods \cite{1997Analysis, Olver2009GMRES, 2010Fast} and numerical steepest descent methods \cite{1984Asymptotic}. All these methods require to know the oscillation level of the integrand in advance. 
The oscillatory integrals appearing in the context of using DNNs as approximate solutions are generated during the iteration and thus, their oscillatory levels cannot be determined before hand. Therefore, aforementioned methods cannot be applied directly to the current situation. We will design a numerical quadrature that tailors to oscillatory integrals generated during the iteration process and estimate its error bound. Moreover, we will establish the error estimate that bounds the error of the DNN solution by the training loss and the quadrature error.

It is well understood that DNNs have the great expressiveness to present different scales of a function. However, the standard DNN model is not always competent to extract multiscale information of a function as discussed previously. The multi-grade deep learning (MGDL) model tailors to the needs of the extraction. The effectiveness of the MGDL model was demonstrated in \cite{Xu:2023aa} and \cite{XuZeng2023} in the context of function approximation and numerical solutions of partial differential equations, respectively. We will develop a MGDL model for the DNN solution of the oscillatory Fredholm integral equation and demonstrate numerically that the proposed MGDL model can effectively extract multiscale components of its oscillatory solution, leading to a promising method, unlike the standard DNN model which suffers from the spectral bias.

We organize this paper in eight sections. In section 2,  we outline the oscillatory Fredholm integral equations under consideration and discuss the oscillatory property of its solution. In  section 3, we describe the DNN model for the numerical solution of the oscillatory integral equation and its associated optimization problem.  Section 4 is devoted to the development of a numerical quadrature scheme for computing the oscillatory integrals involving DNNs generated during the iteration that solves the optimization problem, and its error analysis.  In section 5, we establish the error estimate for the DNN approximate solution bounded by the training loss and the quadrature error. We describe in section 6 the MGDL model for the numerical solution of the integral equation. Numerical results are presented in section 7. Finally, we make conclusive remarks in section 8.

\section{Oscillatory Fredholm Integral Equation of the Second Kind}
In this section, we describe the Fredholm integral equation of the second kind with an oscillatory kernel to be considered in this paper. 

Let $I:=[-1,1]$. We denote by $C(I)$ the space of continuous complex-valued functions defined on $I$ and $C(I^2)$ the space of continuous complex-valued bivariate functions on $I^2$. Suppose that $K\in C(I^2)$ and $f\in C(I)$ are given. We consider the oscillatory integral equation 
\begin{equation}\label{fredholm_equation}
y(s)-\int_{I} K(s,t)e^{i\kappa |s-t|}y(t)\mathrm{d}t=f(s), \ \ s\in I, 
\end{equation}
where  $\kappa\geqslant 1$ is the wavenumber and $y\in C(I)$ is the solution to be solved. 
For simplicity, in this paper we always assume the kernel function
\begin{equation*}
    K(s,t):=\lambda,\ \ (s,t)\in I\times I
\end{equation*}
where $\lambda\in \mathbb{C}$. By defining the integral operator $\mathcal{K}$ for $h\in C(I)$ by
\begin{equation*}
(\mathcal{K}h)(s):=\int_{I} e^{i\kappa |s-t|}h(t)\mathrm{d}t, \ \  s\in I, \label{def_K}
\end{equation*} 
integral equation \eqref{fredholm_equation} can be written in its operator form
\begin{equation}
    (\mathcal{I}-\lambda\mathcal{K})y=f, \label{fredholm_equation_operator}
\end{equation}
where $\mathcal{I}$ denotes the identity operator on $C(I)$.

It is known that the solution $y$ of equation \eqref{fredholm_equation_operator} in general exhibits certain oscillatory behaviour. It is advantageous to classify the degree of oscillation for a given function.
We recall a definition from   \cite{wang2015oscillation}. 

\begin{definition}
     Let $n$ be a positive number and $(X, \|\cdot\|_X)$ be a normed space. A function $u$ is called $\kappa$-oscillatory of order $n$ in $X$ if it satisfies
 \begin{enumerate}
     \item  $u$ is $\kappa$-oscillatory in $X$,
     that is, $u$ is $\kappa$-parameterized and $u\in X$ for any $\kappa$,
     \item there exist a positive constant $c$  such that for all $\kappa>1$,
     $$ \kappa^{-n}\|u\|_X\leqslant c.$$
\end{enumerate}
When $n=0$, we say that $u$ is non-$\kappa$-oscillatory in $X$.
\end{definition}
 Let $m$ be a fixed positive integer. For Sobolev space $H^m(I):=\{u\in L^2(I):u^{(n)}\in L^2(I), n\in \mathbb{Z}_{m+1}\}$ where $\mathbb{Z}_{m+1}:=\{0,1,2,\dots,m\}$ with the norm
$$ \|u\|_{H^m}:=\left(\sum_{j\in \mathbb{Z}_{m+1}}\left\|u^{(j)}\right\|_2^2\right)^{1/2},$$
define the space of the oscillatory functions $H^m_{\kappa,0}(I)$ as
\begin{equation*}
    H^m_{\kappa,0}(I):=\{u_1+u_2e^{i\kappa \cdot}+u_3e^{-i\kappa \cdot}: u_j\ \ \mbox{is non-$\kappa$-oscillatory in}\ \ H^m(I),\ j\in \mathbb{N}_3 \}.
\end{equation*}
where $\mathbb{N}_n:=\{1,2,\dots,n\}$. Then Theorem 5.8 of \cite{wang2015oscillation} shows that the solution $y$ exhibits certain oscillatory behaviour. We state the result below for convenience of reference.

\begin{theorem}\label{Solution:oscilation}
Let $y$ be the solution of equation \eqref{fredholm_equation}. If $K\in C^{[m]}(I^2)$ where $C^{[n]}(I^2):=\{L\in C^n(I^2):max_{p,q\in \mathbb{Z}_{n+1}}\|L^{(p,q)}\|<\infty\}$ is independent of $\kappa$ and $f\in H^m_{\kappa,0}(I)$, then $y\in H^m_{\kappa,0}(I)$.
\end{theorem}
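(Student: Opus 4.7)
The plan is to work from the operator identity $y = f + \lambda \mathcal{K} y$ and show that the operator $\mathcal{K}$ preserves the oscillatory space $H^m_{\kappa,0}(I)$ with bounds uniform in $\kappa$; combined with invertibility of $\mathcal{I}-\lambda\mathcal{K}$, this will force $y$ to inherit the three-term oscillatory structure of $f$.

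The first step is to exploit the piecewise nature of the kernel. Writing
$$e^{i\kappa|s-t|} = e^{i\kappa s}e^{-i\kappa t}\mathbf{1}_{\{t\le s\}} + e^{-i\kappa s}e^{i\kappa t}\mathbf{1}_{\{t>s\}},$$
the operator splits cleanly as
$$(\mathcal{K}h)(s) = e^{i\kappa s}\int_{-1}^{s} e^{-i\kappa t}h(t)\,dt + e^{-i\kappa s}\int_{s}^{1} e^{i\kappa t}h(t)\,dt.$$
Now I would substitute the putative form $h = h_1 + h_2 e^{i\kappa\cdot} + h_3 e^{-i\kappa\cdot}$ and expand the nine resulting products $e^{\pm i\kappa t}\cdot h_j(t)e^{0,\pm i\kappa t}$. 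Three of these products are non-oscillatory (the pairings that cancel), and their primitives give immediately non-$\kappa$-oscillatory contributions in $H^m$. The remaining six carry an oscillatory factor $e^{\pm i\kappa t}$ or $e^{\pm 2i\kappa t}$, and I would apply repeated integration by parts to each, using the factor $1/(\pm i\kappa)$ or $1/(\pm 2i\kappa)$ gained at every step to absorb derivatives of the non-oscillatory $h_j$. After $m$ iterations, the boundary terms at $t=\pm 1$ and $t=s$ contribute (multiplied by $e^{\pm i\kappa s}$) to the three coefficient slots of the decomposition of $\mathcal{K}h$, while the remainder integrals carry a prefactor $\kappa^{-m}$.

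The second step is to read off the decomposition of $\mathcal{K}h$. The boundary terms at $t=s$ turn the factor $e^{\pm i\kappa s}$ in front of the integral into $1$ or $e^{\pm 2i\kappa s}$; together with the prefactors $e^{\pm i\kappa s}$, this yields exactly contributions to the three channels $1$, $e^{i\kappa s}$, $e^{-i\kappa s}$. The boundary terms at $t=\pm 1$ produce pure $e^{\pm i\kappa s}$ contributions. Each resulting coefficient function involves only derivatives of $h_j$ up to order $m$, and the hypothesis on $K$ (here the constant $\lambda$, but the argument is general) and on $h_j$ gives an $H^m$-bound independent of $\kappa$. This establishes that $\mathcal{K}:H^m_{\kappa,0}(I)\to H^m_{\kappa,0}(I)$ is bounded uniformly in $\kappa$.

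The final step is to invert. Given $f\in H^m_{\kappa,0}(I)$, one writes $y=f+\lambda\mathcal{K}y$ and iterates: the Neumann candidate $y=\sum_{k\ge 0}\lambda^k\mathcal{K}^k f$ stays in $H^m_{\kappa,0}(I)$ by the previous step. For a general $\lambda$ for which $\mathcal{I}-\lambda\mathcal{K}$ is invertible on $C(I)$, one instead observes that $y-f=\lambda\mathcal{K}y$, and since $\mathcal{K}y\in H^m_{\kappa,0}(I)$ by the preservation property applied to the already-known $C(I)$ solution (after a bootstrap establishing $y\in H^m(I)$ by standard smoothing arguments for the resolvent), one obtains $y\in H^m_{\kappa,0}(I)$. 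The hardest part of the argument is the bookkeeping in the second step: ensuring that after $m$ integrations by parts every boundary and interior term lands in exactly one of the three channels of $H^m_{\kappa,0}(I)$ with an $H^m$-norm that does not degrade as $\kappa\to\infty$. This is precisely the content of Theorem 5.8 of \cite{wang2015oscillation}, whose detailed proof carries out this channel tracking.
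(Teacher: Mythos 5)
The paper does not prove this theorem: it quotes Theorem 5.8 of \cite{wang2015oscillation} ``for convenience of reference'' and gives no argument of its own, so there is no in-paper proof to compare your sketch against. Your outline (kernel split, channel tracking via integration by parts, inversion) is the right shape of argument, and you correctly flag the regularity bookkeeping in step~2 as the crux. Two concrete gaps remain, however. The first is that ``$m$ integrations by parts'' does not by itself yield $H^m$-bounded channel coefficients. After the $l$-th IBP the $t=s$ boundary term contributes $\kappa^{-l-1}h_j^{(l)}(s)$ to the non-oscillatory slot, and $h_j^{(l)}$ is only in $H^{m-l}$; collecting all $m$ such terms in a single coefficient $u_1$ therefore produces a function that is merely $H^1$, not $H^m$. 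The remainder $\kappa^{-m}\int_{-1}^{s}h_j^{(m)}(t)e^{\pm i\kappa t}\,dt$ has the same problem (and, when it does lie in $H^m$, its $H^m$ seminorm grows like $\kappa^{m-1}$). So the naive $m$-fold IBP does not by itself certify membership in $H^m_{\kappa,0}(I)$; the decomposition has to be arranged more carefully, which is exactly where the real work in \cite{wang2015oscillation} lies.

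The second gap is a circularity in your step~3 for general $\lambda$. The preservation property you establish is $\mathcal{K}\colon H^m_{\kappa,0}(I)\to H^m_{\kappa,0}(I)$, so to invoke it on $y$ and conclude $\mathcal{K}y\in H^m_{\kappa,0}(I)$ you already need $y\in H^m_{\kappa,0}(I)$, which is the conclusion being sought. The bootstrap you propose only delivers $y\in H^m(I)$, and since $\|y\|_{H^m}$ generically grows like $\kappa^m$, $y$ is \emph{not} a non-$\kappa$-oscillatory element of $H^m(I)$ and cannot be fed into the preservation lemma as a single $u_1$-channel input. The Neumann-series variant does close the loop, but only when $|\lambda|$ times the ($\kappa$-uniform) operator norm of $\mathcal{K}$ on $H^m_{\kappa,0}(I)$ is below one; for general invertible $\mathcal{I}-\lambda\mathcal{K}$ your sketch supplies no substitute argument.
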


This theorem shows the solution $y$ is in the space of oscillatory function $H^m_{\kappa,0}(I)$, which is the base for our further discussion and numerical experiments.

For numerical solutions of the integral equation \eqref{fredholm_equation_operator}, traditional methods typically focus on constructing an appropriate function space where a viable solution exists, and subsequently seek the solution within this space. Commonly utilized function spaces encompass polynomial spaces, trigonometric function spaces, piecewise polynomial spaces, and multi-scale spaces. The construction of a function space typically necessitates a profound understanding of the solution's characteristics.
In contrast, the DNN method directly seeks the solution through training data. Specifically, this methodology employs a DNN with unknown parameters to approximate the desired solution. The equation and constraints are then transformed into an optimization problem. Finally, the optimal parameters of the network are determined. In the subsequent section, we will introduce the DNN method to solve the equation \eqref{fredholm_equation_operator}.

\section{DNN Learning Model}
In this section, we describe a DNN learning model for solving the oscillatory Fredholm integral equation. 

We begin by describing DNNs to be used as approximate solutions of the integral equation.  We adopt the notation from \cite{Xu:2021aa, XuZhang2023}.
A DNN is  a function formed by compositions of vector-valued functions, each of which is defined by an activation function applied to an affine map.
Given a uni-variate activation function $\sigma:\mathbb{R}\to \mathbb{R}$, the vector-valued activation function $\boldsymbol{\sigma}:\mathbb{R}^d\to \mathbb{R}^d$ is defined as
$$
\boldsymbol{\sigma}(\mathbf{v}):= [\sigma(v_1), \sigma(v_2), \ldots, \sigma(v_d)]^T, \ \ \mbox{for}\ \ \mathbf{v} := [v_1, v_2, \ldots, v_{d}]^T\in \mathbb{R}^{d}.
$$
For vector-valued functions $f_j$, $j\in \mathbb{N}_n$, satisfying the condition that the range of $f_j$ is contained in the domain of $f_{j+1}$, their  consecutive composition is denoted by
$$
\bigodot_{j=1}^n f_j:=f_n\circ f_{n-1}\circ \dots \circ f_1.
$$
Suppose that positive integers $m_j$, $j\in \mathbb{Z}_{n+1}$,  with $m_0:=1, m_n:=2$, are chosen.
For weight matrices $\mathbf{W}_j\in \mathbb{R}^{m_j\times m_{j-1}}$ and the bias vectors $\mathbf{b}_j\in \mathbb{R}^{m_j}$, $j\in \mathbb{N}_{n}$, a DNN is a function defined by
\begin{equation}
    \mathcal{N}_n(\{\mathbf{W}_j, \mathbf{b}_j\}_{j=1}^n;s) := \left(\mathbf{W}_n\bigodot_{j=1}^{n-1}\boldsymbol{\sigma}(\mathbf{W}_j\cdot+\mathbf{b}_j)+\mathbf{b}_n\right)(s), \quad s\in I. \label{dnn_output}
\end{equation}
In particular, the output of the last hidden layer is called the feature of the DNN, which is defined by
\begin{equation}
    \mathcal{F}_{n-1}(\{\mathbf{W}_j, \mathbf{b}_j\}_{j=1}^{n-1};s) := \left(\bigodot_{j=1}^{n-1}\boldsymbol{\sigma}(\mathbf{W}_j\cdot+\mathbf{b}_j)\right)(s), \quad s\in I.  \label{dnn_feature}
\end{equation}

The goal of this paper is to construct approximate solutions, of integral equation \eqref{fredholm_equation}, which take the form of \eqref{dnn_output}. Note that the solution of equation \eqref{fredholm_equation} is a complex-valued function. This requires us to define an operator $\mathcal{T}$ that transforms a real two dimensional vector-valued function to a complex-valued function.
Specifically, for an $\mathbf{f}(s):=[f_1(s), f_2(s)]^T$, $s\in I$, with $f_1, f_2$ being real-valued functions defined on $I$, the operator $\mathcal{T}$ is defined as 
$(\mathcal{T} \mathbf{f})(s):=f_1(s)+if_2(s)$, $s\in I$.
With this operator, we define the loss function as
\begin{flalign}
    e\left(\{\mathbf{W}_j, \mathbf{b}_j\}_{j=1}^{n};s\right):= 
    (\mathcal{I}-\lambda\mathcal{K}) \mathcal{T}\mathcal{N}_n(\{\mathbf{W}_j, \mathbf{b}_j\}_{j=1}^{n};\cdot) (s)-f(s),\quad s\in I. \label{hat_single_grade_error_function_K}
\end{flalign}
We then find the optimal parameters $\{\mathbf{W}^*_j, \mathbf{b}^*_j\}_{j=1}^n$ by solving the  optimization problem
\begin{flalign}
\min_{\{\mathbf{W}_j, \mathbf{b}_j\}_{j=1}^n} \left\|e\left(\{\mathbf{W}_j, \mathbf{b}_j\}_{j=1}^{n};\cdot\right)\right\|_2^2.  \label{hat_DNN_opt_K}
\end{flalign}
Once the optimal parameters $\{\mathbf{W}^*_j, \mathbf{b}^*_j\}_{j=1}^n$ is obtained, the function
$y^*:=\mathcal{T}\mathcal{N}_{n}(\{\mathbf{W}_j^*, \mathbf{b}_j^*\}_{j=1}^n;\cdot)$
provides a DNN solution of equation \eqref{fredholm_equation}.

Numerical implementation for solving optimization problem \eqref{hat_DNN_opt_K} requires the availability of collocation data $\{(x_j,f(x_j))$, $j\in \mathbb{N}_N\}$ and discretization of the integral operator \eqref{def_K}. 
The $L_2$ norm used in equation \eqref{hat_DNN_opt_K} may be approximated by its discrete form
\begin{equation*}
    \|\phi\|_N:=\sqrt{\frac{1}{N}\sum_{j\in \mathbb{N}_N} |\phi(x_j)|^2}, \ \ \mbox{for}\ \ 
    \phi \in C(I),
\end{equation*}
where $| \cdot |$ is the modulus of a complex number. 
It is clear that the functional $\|\cdot\|_N$ is a semi-norm on $C(I)$, but it is not a norm since $\|\phi\|_N=0$ does not imply $\phi=0$. This semi-norm $\|\cdot\|_N$ is associated with the $l_2$ norm in $\mathbb{C}^N$. For any $\phi\in C(I)$, defining $\mathbf{v}_\phi:=[\phi(x_j), j\in \mathbb{N}_N]$, it follows from the definition of $\|\cdot\|_N$ that
\begin{equation}
    \|\phi\|_N=\frac{\|\mathbf{v}_\phi\|_{\ell_2}}{\sqrt{N}}. \label{relation_N_2}
\end{equation} 
We then assume that there is a discrete oscillatory integral operator $\mathcal{K}_{p_{_\kappa}}$ that approximates the operator $\mathcal{K}$ defined by \eqref{def_K}, where $p_\kappa$ is a positive integer dependent on $\kappa$. We postpone the construction of operator $\mathcal{K}_{p_{_\kappa}}$ to the next section. 

With the availability of the discrete form of the $L_2$-norm and the discrete oscillatory integral operator $\mathcal{K}_{p_{_\kappa}}$, the discrete loss function is defined by
 \begin{flalign}
    \tilde{e}\left(\{\mathbf{W}_j, \mathbf{b}_j\}_{j=1}^{n};s\right):= \left(f-
    (\mathcal{I}-\lambda\mathcal{K}_{p_{_\kappa}}) \mathcal{T}\mathcal{N}_n(\{\mathbf{W}_j, \mathbf{b}_j\}_{j=1}^{n};\cdot)\right) (s),\quad  s\in I, \label{single_grade_error_function}
\end{flalign}
which approximates the loss function defined by \eqref{hat_single_grade_error_function_K}.
We then find the optimal parameters $\{\tilde{\mathbf{W}}^*_j, \tilde{\mathbf{b}}^*_j\}_{j=1}^n$ by solving the discrete optimization problem
\begin{flalign}
\arg\min_{\{\mathbf{W}_j, \mathbf{b}_j\}_{j=1}^n} \left\|\tilde{e}\left(\{\mathbf{W}_j, \mathbf{b}_j\}_{j=1}^{n};\cdot\right)\right\|_{N}^2,  \label{DNN_opt}
\end{flalign}
where
\begin{equation*}
    \left\|\tilde{e}\left(\{\mathbf{W}_j, \mathbf{b}_j\}_{j=1}^{n};\cdot\right)\right\|_{N}^2:=\frac{1}{N}\sum_{l\in\mathbb{N}_N}|\tilde{e}\left(\{\mathbf{W}_j, \mathbf{b}_j\}_{j=1}^{n};x_l\right)|^2.
\end{equation*}
Upon finding the parameters $\{\tilde{\mathbf{W}}^*_j, \tilde{\mathbf{b}}^*_j\}_{j=1}^n$, we obtain a numerical solution of equation \eqref{fredholm_equation_operator} given by 
\begin{equation}
     \tilde{y}^*:=\mathcal{T}\mathcal{N}_{n}(\{\tilde{\mathbf{W}}_j^*, \tilde{\mathbf{b}}_j^*\}_{j=1}^n;\cdot) \label{def_fin_y_star}
\end{equation}
with an error defined by
\begin{equation}
\tilde{e}^*(s):= \tilde{e}\left(\{\tilde{\mathbf{W}}_j^*, \tilde{\mathbf{b}}_j^*\}_{j=1}^{n};s\right),\quad s\in I. \label{def_fin_e}
\end{equation}

Optimization problem \eqref{DNN_opt} is often solved by employing gradient-based algorithms. This motivates us to construct the discrete operator $\mathcal{K}_{p_{_\kappa}}$ via a numerical integration method.

\section{Numerical Quadrature of Oscillatory Integrals}
This section is devoted to the development of the discrete oscillatory integral operator $\mathcal{K}_{p_{_\kappa}}$ and its error estimates.

The discrete oscillatory integral operator $\mathcal{K}_{p_{_\kappa}}$ that approximates the  oscillatory integral operator $\mathcal{K}$ is a key attribute of the DNN model. The discrete operator $\mathcal{K}_{p_{_\kappa}}$ should effectively approximate the continuous oscillatory integral operator $\mathcal{K}$. Typically, in the optimization process, the optimization problem \eqref{DNN_opt} is solved by using gradient-based optimization algorithms. These algorithms iteratively update the model parameters $\theta_j:=\{\mathbf{W}_{l,j}, \mathbf{b}_{l,j}\}_{l=1}^{n}$, where $j\in \mathbb{N}:=\{1,2,3,\dots\}$. By defining 
\begin{equation*}
    g_{j}(s):= \mathcal{T} \mathcal{N}_n(\theta_j ;s)\in C(I), \quad j\in \mathbb{N}, \quad s\in I, 
\end{equation*}
we will construct $\mathcal{K}_{p_{_\kappa}}$ in a way that the errors $| (\mathcal{K} g_{j}-\mathcal{K}_{p_{_\kappa}} g_{j}) (x_l)|, l\in \mathbb{N}_N, j\in \mathbb{N}$ are as small as possible. Noting that for any $l\in \mathbb{N}_N$, 
\begin{equation*}
    |(\mathcal{K} g_{j}-\mathcal{K}_{p_{_\kappa}} g_{j}) (x_l)|\leqslant \|\mathcal{K} g_{j}-\mathcal{K}_{p_{_\kappa}} g_{j}\|_\infty, \quad j\in \mathbb{N}, 
\end{equation*} 
our objective is to construct the operator $\mathcal{K}_{p_{_\kappa}}$ to control $\|\mathcal{K}\chi-\mathcal{K}_{p_{_\kappa}} \chi\|_\infty$
when wavenumber $\kappa$ is large for $\chi$ in the class of functions $\{g_{j}\}_{j\in \mathbb{N}}$.

The behavior regarding the level of oscillation in the function sequence $\{g_{j}\}_{j\in \mathbb{N}}$ is a crucial consideration, especially given the use of the $\sin$ function as the activation function in the DNN, which directly influences the oscillatory behavior through the parameter $\theta_j, j\in \mathbb{N}$.  The initial guess $\theta_1$ proposed in \cite{he2015delving} leads to a non-oscillatory function $g_1$ independent of $\kappa$, setting the starting point for the optimization.  In the subsequent optimization process, if the model runs normally, the oscillation of the generated function sequence $\{g_{j}\}_{j\in \mathbb{N}}$ will not deviate significantly from the oscillation of the exact solution $y$. Below, we define a class of oscillatory functions with a relaxation oscillation level so that the class contains the exact solution $y$ and approximate solutions $g_j$ of the $j$-th iteration steps.

Motivated from Theorem \ref{Solution:oscilation}, we assume that there exist functions $u_j$ , $j\in \mathbb{N}_3$, 
satisfying the condition for some  $\tau>0$,
\begin{equation*}
|u_j^{(l)}(s)| \leqslant \tau , \quad \mbox{for all}\ \ s\in I, \  \ l\in \mathbb{Z}_{m+1}, 
\end{equation*}
such that the solution $y$ of integral equation \eqref{fredholm_equation_operator} is in $H^m_{\kappa, 0}(I)$. In other words, $y$ can be expressed as
\begin{equation}
    y(s) = u_1(s) + u_2(s) e^{i\kappa s} + u_3(s) e^{-i\kappa s}, \quad s\in I. \label{solution}
\end{equation}
The functions $g_j$ are not exactly in $H^m_{\kappa, 0}(I)$ but in its perturbation. Below, we define a perturbation class of $H^m_{\kappa, 0}(I)$ that covers the functions $g_j$. 
Let $\Gamma \geqslant 0$ and $r\in \mathbb{N}$. Suppose that  $\alpha_j\in \mathbb{R}$, $j\in \mathbb{N}_{r}$, are unknown but satisfy $|\alpha_j|\leqslant 1+\Gamma$, and functions $w_j$, $j\in \mathbb{N}_{r}$, satisfy
 \begin{equation}
     |w_j^{(l)}(s)|\leqslant \tau, \quad s\in I, \ \ l\in \mathbb{Z}_{m+1}. \label{requirement}
\end{equation}
The functions $\chi_{\kappa}$ in the perturbation class have the form 
\begin{equation}
    \chi_{\kappa}(s):=\sum_{j=1}^r w_j(s) e^{i\alpha_j\kappa s}, \quad s\in I.\label{def_chi}
\end{equation}

We next introduce a sequence of discrete operators $\mathcal{K}_{p}$, for $p \in \mathbb{N}$, which approximate the integral operator $\mathcal{K}$ by using the compound trapezoidal quadrature formula.
Specifically, for each $p \in \mathbb{N}$, let $h:=2/p$, $s_j:=-1+jh$ for $j\in\mathbb{Z}_{p+1}$, and  for any $F\in C(I)$, we define
\begin{equation}
    (\mathcal{K}_{p} F)(s):= \frac{h}{2} \left[F(s_0)e^{i\kappa|s-s_0|}+2\sum_{j=1}^{p-1} F(s_j)e^{i\kappa|s-s_j|}+F(s_p)e^{i\kappa|s-s_p|}\right], \quad s\in I. \label{def_mathcal_K_p}
\end{equation}
In the rest of this section, we will choose the value of $p$ according to the growth of the error $\|\mathcal{K} \chi_{\kappa}-\mathcal{K}_{p}\chi_{ \kappa}\|_\infty$ with respect to the wavenumber $\kappa$. 
Error analysis of the compound trapezoidal quadrature formula for smooth non-oscillatory integrands is well-understood (for example, see \cite{davis2007methods}). However, since the integrand that we consider here is oscillatory and non-differentiable, we will estimate the error by taking the special property of the integrand into account. 

When analyzing the error $\|\mathcal{K} \chi_{\kappa}-\mathcal{K}_{p}\chi_{ \kappa}\|_\infty$, due to \eqref{def_chi}, 
it suffices to consider 
\begin{equation}
    \tilde{\chi}_\kappa(t):=w(t)e^{i\alpha \kappa t}, \quad t\in I, \label{def_tilde_chi}
\end{equation} 
where $\alpha\in \mathbb{R}$ satisfies $|\alpha|\leqslant 1+\Gamma$, and  function $w$ is $m$ times differentiable and satisfies \eqref{requirement}. Our goal is to develop an effective  quadrature scheme for numerical computation of the integral
\begin{equation*}
(\mathcal{K} w(\cdot)e^{i\alpha \kappa \cdot})(s):=\int_{I} \left[w(t) e^{i\kappa\alpha t}\right]e^{i\kappa |s-t|} \mathrm{d}t, \ \  s\in I, 
\end{equation*} 
for the purpose of constructing the discrete operator $\mathcal{K}_{p_\kappa}$.

Next, we estimate $\|\mathcal{K}\tilde{\chi}_{\kappa}-\mathcal{K}_{p} \tilde{\chi}_{\kappa}\|_\infty$.
To this end, for a fixed $s\in I$, we define 
\begin{equation}
    \phi (t):=w(t)e^{i \kappa (\alpha t+|s-t|)} , \quad t\in I.  \label{def_phi}
\end{equation}
Then,  $(\mathcal{K}\tilde{\chi}_{\kappa}-\mathcal{K}_{p} \tilde{\chi}_{\kappa})(s)$ can be expressed as
\begin{equation}
    (\mathcal{K}\tilde{\chi}_{\kappa}-\mathcal{K}_{p} \tilde{\chi}_{\kappa})(s)= \sum_{j=0}^{p-1}\left[\int_{s_j}^{s_{j+1}} \phi(t)\mathrm{d}t-\frac{h}{2} \left(\phi(s_j)+\phi(s_{j+1})\right)\right], \label{prop1_equ1}
\end{equation}
where $h=\frac{2}{p}$, $s_j= -1+jh$ for $j\in \mathbb{Z}_{p+1}$. Thus for any $j\in \mathbb{Z}_p$, it yields that
$s_{j+1}-s_j = h$.

Since the integrand $\phi$ defined by \eqref{def_phi} is $m$ times differentiable at $I\setminus\{s\}$ for any positive integer $m$, we partition interval $I$ into three subintervals such that except for the interval containing the point $s$, the function $\phi$ remains $m$ times differentiable over the other two intervals.
Specifically, we choose 
$$
j^*:= \begin{cases}
 \frac{s+1}{h}-1, &\textrm{if\,} \frac{s+1}{h} \in \mathbb{N},\\
 \lfloor \frac{s+1}{h}\rfloor, &\textrm{otherwise,}
\end{cases}
$$       
where for $x\in \mathbb{R}$, $\lfloor x\rfloor$ represents the largest integer not exceeding $x$. Thus, $j^*\in \mathbb{Z}_{p}$. Noticing
\begin{equation}
|s-t|=\begin{cases}
s-t, & t\in [s_0, s_{j^*}],\\
t-s,    & t\in [s_{j^*+1}, s_{p}],
\end{cases} \label{val_s-t}
\end{equation}
together with the definition \eqref{def_phi} of function $\phi$, we conclude that $\phi$ is analytic in the intervals $[s_0, s_{j^*}]$ and $[s_{j^*+1}, s_{p}]$. 
If we define an $m$-times differentiable function 
\begin{equation}
    \psi(t):=\begin{cases}
        w(t)e^{i(\alpha-1)\kappa t}, & t\in [s_0, s_{j^*}],\\
        w(t)e^{i(\alpha+1)\kappa t}, & t\in [s_{j^*+1}, s_{p}],
    \end{cases}\label{def_psi}
\end{equation}
together with the relation \eqref{val_s-t}, we obtain
\begin{equation*}
    \phi(t)=\begin{cases}
        e^{i\kappa s}\psi(t), & t\in [s_0, s_{j^*}],\\
        e^{-i\kappa s}\psi(t), & t\in [s_{j^*+1}, s_{p}].
    \end{cases}
\end{equation*}
Upon defining
\begin{flalign}
    T_1:=e^{i\kappa s}\sum_{j=0}^{j^*-1}\left[\int_{s_j}^{s_{j+1}} \psi(t)\,\mathrm{d}t-\frac{h}{2} \left(\psi(s_j)+\psi(s_{j+1})\right)\right], \label{def_T1}
\end{flalign}
\begin{equation}
    T_2:=\int_{s_{j^*}}^{s_{j^*+1}} \phi(t)\mathrm{d}t-\frac{h}{2} \left(\phi(s_{j^*})+\phi(s_{j^*+1})\right), \label{def_T2}
\end{equation}
\begin{flalign}
    T_3:=e^{-i\kappa s}\sum_{j=j^*+1}^{p-1}\left[\int_{s_j}^{s_{j+1}} \psi(t)\,\mathrm{d}t-\frac{h}{2} \left(\psi(s_j)+\psi(s_{j+1})\right)\right], \label{def_T3}
\end{flalign}
the equation \eqref{prop1_equ1} can be rewritten as
\begin{equation*}
    (\mathcal{K}\tilde{\chi}_{\kappa}-\mathcal{K}_{p} \tilde{\chi}_{\kappa})(s)= T_1+T_2+T_3. 
\end{equation*}
Using the triangle inequality in the above equation, it yields that 
\begin{flalign}
|(\mathcal{K}\tilde{\chi}_{\kappa}-\mathcal{K}_{p} \tilde{\chi}_{\kappa})(s)|\leqslant  |T_1|+|T_2|+|T_3|. \label{estimate_T1_T2_T3}
\end{flalign}
We will estimate $T_j$, $j=1,2,3$, separately.

We first estimate $|T_2|$ in the following lemma.

\begin{lemma}\label{lem_T2}
If $T_2$ is defined in equation \eqref{def_T2}, then
\begin{equation*}
    |T_2| \leqslant \frac{4\tau}{p} .
\end{equation*}
\end{lemma}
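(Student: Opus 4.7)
The plan is to bound $|T_2|$ by brute force, exploiting only the pointwise bound on $\phi$ (not any smoothness), since the interval $[s_{j^*}, s_{j^*+1}]$ is exactly the subinterval containing the non-differentiable point $s$, so the standard trapezoidal error estimate (which relies on second derivatives) is unavailable here.

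First I would observe that, by the definition \eqref{def_phi} of $\phi$ and the assumption \eqref{requirement} on $w$ (for $l=0$), one has $|\phi(t)| = |w(t)| \cdot |e^{i\kappa(\alpha t + |s-t|)}| \leq \tau$ for every $t \in I$, because the exponential factor has modulus one. This is the only ingredient about the integrand that I need.

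Next, I would apply the triangle inequality directly to the definition \eqref{def_T2} of $T_2$:
\begin{equation*}
|T_2| \leq \left|\int_{s_{j^*}}^{s_{j^*+1}} \phi(t)\, \mathrm{d}t\right| + \frac{h}{2}\bigl(|\phi(s_{j^*})| + |\phi(s_{j^*+1})|\bigr).
\end{equation*}
Bounding the integral by $\tau \cdot (s_{j^*+1}-s_{j^*}) = \tau h$ and each of the two point evaluations by $\tau$, I get $|T_2| \leq \tau h + \tau h = 2\tau h = \frac{4\tau}{p}$, using $h=2/p$.

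There is no real obstacle here: the lemma is a worst-case estimate on the single subinterval where $\phi$ fails to be smooth, and the constant $4$ in the bound is precisely the sum of the crude bounds on the integral term and the two endpoint terms. The more delicate work (using smoothness of $\psi$ and oscillation in $\kappa$) is deferred to the estimates of $T_1$ and $T_3$ in the subsequent lemmas.
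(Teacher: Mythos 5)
Your proof is correct and follows essentially the same route as the paper: apply the triangle inequality to $T_2$, bound both the integral and the two endpoint terms crudely by $\tau$ times the subinterval length $h$, and substitute $h=2/p$. The only cosmetic difference is that you establish $|\phi|\leqslant\tau$ pointwise at the start, whereas the paper works with $\|\phi\|_\infty$ and bounds it by $\tau$ at the end.
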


\begin{proof}
Applying the triangle inequality to the definition of $T_2$, it yields that
\begin{flalign*}
    |T_2| 
    \leqslant  \int_{s_{j^*}}^{s_{j^*+1}}  \left| \phi(t)\right|\,\mathrm{d}t+ h\|\phi\|_\infty
    \leqslant (s_{j^*+1}-s_{j^*}+h)\|\phi\|_\infty.
\end{flalign*}
This together with the relation $s_{j^*+1}-s_{j^*}=h$ leads to the bound
\begin{equation}
    |T_2| \leqslant 2h\|\phi\|_\infty. \label{prop1_equ3}
\end{equation}
Using the inequality \eqref{requirement} with $l:=0$, we know that $\|w\|_\infty \leqslant \tau$. Combining this with the definition of $\phi$, we observe that  $\|\phi\|_\infty=\|w\|_\infty\leqslant \tau$. Thus, the inequality \eqref{prop1_equ3} reduces to
$|T_2| \leqslant 2\tau h$, which
together with the fact $h=2/p$ yields the desired result.
\end{proof}

We next estimate the terms $|T_1|$ and $|T_3|$. In the following consideration, we unify the cases $|T_1|$ and $|T_3|$ into one.
For $\mu, \nu\in \mathbb{Z}_{p}$ satisfying $0\leqslant \mu\leqslant \nu\leqslant p-1$ with $j^*\notin [\mu,\nu]$, we consider
\begin{equation}
T:=\sum_{j=\mu}^{\nu}\left[\int_{s_j}^{s_{j+1}} \psi(t)\,\mathrm{d}t-\frac{h}{2} \left(\psi(s_j)+\psi(s_{j+1})\right)\right]. \label{def_T}
\end{equation}
Clearly, if we choose $\mu:=0, \nu:=j^*-1$, then $|T|=|T_1|$ by equation \eqref{def_T1}, and if we choose $\mu:=j^*+1, \nu:=p-1$, then $|T|=|T_3|$ by equation \eqref{def_T3}.

To estimate $|T|$, we will make use of the Taylor expansion of the function $\psi$ at the midpoint of each subinterval into the definition of $T$. Specifically, for each $j\in \mathbb{Z}_{[\mu,\nu]}:=\{\mu, \mu+1, \dots, \nu\}$, we let $s_{j+1/2}:=\frac{1}{2}(s_j+s_{j+1})$, and write $\psi^{(l)}_{j+1/2}:=\psi^{(l)}(s_{j+1/2})$ for $l\in \mathbb{Z}_{m+1}$. By the Taylor theorem, for each $t\in [s_j, s_{j+1}]$, there exists a $\xi_t$ between $t$ and $s_{j+1/2}$ such that
\begin{equation}
    \psi(t) =  \psi_{j+1/2}^{(0)} + \sum_{l=1}^{m-1} \frac{(t-s_{j+1/2})^l}{l!}\psi^{(l)}_{j+1/2} + \frac{(t-s_{j+1/2})^{m}}{m!} \psi^{(m)}(\xi_t). \label{Taylor}
\end{equation}
For each $j\in \mathbb{Z}_{[\mu,\nu]}$, we let
$$
D_{j,1} := \frac{(-h)^m}{2^{m}m!} \psi^{(m)}(\xi_{s_j}) + \frac{h^m}{2^m m!} \psi^{(m)}(\xi_{s_{j+1}}).
$$
By the Taylor expansion \eqref{Taylor},  we obtain that
\begin{equation}
    \frac{h}{2}[\psi(s_j) + \psi(s_{j+1})] = h\psi_{j+1/2}^{(0)} + \sum_{l=1}^{\tilde{m}} \frac{2}{(2l)!}\left(\frac{h}{2}\right)^{2l+1} \psi_{j+1/2}^{(2l)} + \frac{hD_{j,1}}{2},
    \label{lem_estimation1_T_equ1_else}
\end{equation}
where $\tilde{m}:=\lfloor (m-1)/2\rfloor$.
Meanwhile, 
by integrating both sides of equation \eqref{Taylor} from $s_j$ to $s_{j+1}$, and letting
$$
D_{j,2} := \int_{s_j}^{s_{j+1}} \frac{(t-s_{j+1/2})^m}{m!} \psi^{(m)}(\xi_t)\mathrm{d}t,
$$
we have that
\begin{equation}
    \int_{s_j}^{s_{j+1}} \psi(t)\,\mathrm{d}t = h\psi_{j+1/2}^{(0)} + \sum_{l=1}^{\tilde{m}} \frac{2}{(2l+1)!}\left(\frac{h}{2}\right)^{2l+1} \psi_{j+1/2}^{(2l)} + D_{j,2}.\label{lem_estimation1_T_equ2_else}
\end{equation}
Substituting equations \eqref{lem_estimation1_T_equ1_else} and \eqref{lem_estimation1_T_equ2_else} into the right-hand side of equation \eqref{def_T} yields
\begin{equation}
    T = \sum_{j=\mu}^{\nu}\left(-\sum_{l=1}^{\tilde{m}}\frac{2l h^{2l+1}}{4^l(2l+1)!}\psi_{j+1/2}^{(2l)}   + D_{j,2} - \frac{hD_{j,1}}{2}\right). \label{T_Taylor_expansion}
\end{equation}
Upon defining
\begin{equation}
    U_l:=h\sum_{j=\mu}^{\nu} \psi_{j+1/2}^{(2l)},\quad l\in \mathbb{N}_{\tilde{m}},\label{def_U_l}
\end{equation}
and 
\begin{equation}
    \eta_l:=\frac{2l}{4^{l}(2l+1)!},\quad l\in \mathbb{N}_{\tilde{m}}, \label{def_eta_sequence}
\end{equation}
we rewrite the equation \eqref{T_Taylor_expansion} as
\begin{equation}
    T = -\sum_{l=1}^{\tilde{m}}   \eta_l  h^{2l}U_l+\sum_{j=\mu}^{\nu}\left( D_{j,2} - \frac{hD_{j,1}}{2}\right). \label{T_Taylor_expansion_U_eta}
\end{equation}

We begin estimating $|T|$ by establishing the inequality
\begin{equation}
|T|\leqslant \left|T + \sum_{l=1}^{\tilde{m}}   \eta_l  h^{2l}U_l\right|+\left|\sum_{l=1}^{\tilde{m}} \eta_l h^{2l}U_l\right|. \label{est1}
\end{equation}
The next lemma estimates the first term of the right-hand side of \eqref{est1}.

\begin{lemma} \label{lem_estimation_T}
If $T$ is defined as in equation \eqref{def_T}, then
\begin{equation}
    \left|T+ \sum_{l=1}^{\tilde{m}}   \eta_l h^{2l}U_{l}\right|\leqslant 3(\nu-\mu+1)\|\psi^{(m)}\|_{L_\infty([s_\mu, s_{\nu+1}])}\left(\frac{h}{2}\right)^{m+1} .\label{form_T}
\end{equation}
\end{lemma}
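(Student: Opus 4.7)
The plan is to start from the Taylor-expansion identity \eqref{T_Taylor_expansion_U_eta}, which says
\[
T + \sum_{l=1}^{\tilde m}\eta_l h^{2l} U_l \;=\; \sum_{j=\mu}^{\nu}\Bigl(D_{j,2}-\tfrac{h}{2}D_{j,1}\Bigr),
\]
so the whole task reduces to bounding each $D_{j,1}$ and $D_{j,2}$ uniformly in $j\in\mathbb{Z}_{[\mu,\nu]}$ and then summing. Since $[\mu,\nu]$ contains no index equal to $j^\ast$, the function $\psi$ is $m$-times differentiable on each $[s_j,s_{j+1}]\subset[s_\mu,s_{\nu+1}]$, so all the remainder values $\psi^{(m)}(\xi_{s_j})$, $\psi^{(m)}(\xi_{s_{j+1}})$, and $\psi^{(m)}(\xi_t)$ lie in $[s_\mu,s_{\nu+1}]$; throughout I write $M:=\|\psi^{(m)}\|_{L_\infty([s_\mu,s_{\nu+1}])}$.

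For $D_{j,1}$ the triangle inequality on its definition gives immediately
\[
|D_{j,1}|\;\leqslant\;\frac{h^m}{2^{m-1}\,m!}\,M,\qquad\text{hence}\qquad \bigl|\tfrac{h}{2}D_{j,1}\bigr|\;\leqslant\;\frac{h^{m+1}}{2^{m}\,m!}\,M.
\]
For $D_{j,2}$ I pull the sup of $|\psi^{(m)}|$ out of the integral and use the elementary computation $\int_{s_j}^{s_{j+1}}|t-s_{j+1/2}|^m\,\mathrm{d}t = 2\cdot(h/2)^{m+1}/(m+1)$, which yields
\[
|D_{j,2}|\;\leqslant\;\frac{h^{m+1}}{2^{m}(m+1)!}\,M.
\]

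Adding these two estimates gives per-subinterval bound
\[
\bigl|D_{j,2}-\tfrac{h}{2}D_{j,1}\bigr|\;\leqslant\;\frac{h^{m+1}}{2^{m}\,m!}\cdot\frac{m+2}{m+1}\,M
\;=\;\Bigl(\frac{h}{2}\Bigr)^{m+1}\cdot\frac{2(m+2)}{(m+1)!}\,M.
\]
A direct check shows $\frac{2(m+2)}{(m+1)!}\leqslant 3$ for every $m\geqslant 1$ (equality at $m=1$ where the ratio equals $3$, strictly less for $m\geqslant 2$), so each summand is bounded by $3(h/2)^{m+1}M$. Summing the $\nu-\mu+1$ terms and applying the triangle inequality to the right-hand side of the identity above yields \eqref{form_T}.

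There is no real obstacle here; the only place one has to be careful is the constant, which is tight at $m=1$ and forces the factor $3$. Everything else is bookkeeping of the Taylor remainders and the observation that $\psi$ is smooth on $[s_\mu,s_{\nu+1}]$ because the singular point $s$ sits in the excluded subinterval $[s_{j^\ast},s_{j^\ast+1}]$.
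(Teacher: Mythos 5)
Your proof is correct and follows the same route as the paper: expand via \eqref{T_Taylor_expansion_U_eta}, bound $|D_{j,1}|$ and $|D_{j,2}|$ by the $L_\infty$ norm of $\psi^{(m)}$ on $[s_\mu,s_{\nu+1}]$ using the Taylor remainder, and absorb the constant into $3$ via the observation that $\tfrac{2(m+2)}{(m+1)!}\leqslant 3$ (equivalently, the paper's $\tfrac{m+2}{(m+1)!}\leqslant\tfrac{3}{2}$). Your remark on why $\psi$ is smooth on $[s_\mu,s_{\nu+1}]$ is a useful clarification but the substance is identical.
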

\begin{proof}
By equation \eqref{T_Taylor_expansion_U_eta}  and the triangle inequality, we have that
\begin{equation}
    \left|T + \sum_{l=1}^{\tilde{m}}   \eta_l  h^{2l}U_l\right| \leqslant \sum_{j=\mu}^{\nu}\left(|D_{j,2}| + \frac{h|D_{j,1}|}{2}\right).\label{T_else}
\end{equation}
It suffices to estimate the reminder $D_{j,1}, D_{j,2}, j\in \mathbb{Z}_{[\mu,\nu]}$ of the Taylor expansion. Clearly, for each $j\in \mathbb{Z}_{[\mu,\nu]}$, we have the estimates
$$
|D_{j,1}| \leqslant \frac{h^m\|\psi^{(m)}\|_\infty}{2^{m-1} m!},
$$
where  $\|\psi^{(m)}\|_\infty:=\|\psi^{(m)}\|_{L_\infty([s_\mu, s_{\nu+1}])}$
and
$$
|D_{j,2}| \leqslant \frac{\|\psi^{(m)}\|_\infty}{m!}\int_{s_j}^{s_{j+1}}|t-s_{j+1/2}|^{m}\mathrm{d}t = \frac{h^{m+1}\|\psi^{(m)}\|_\infty}{2^m(m+1)!}.
$$
Substituting the above two estimates into the right-hand side of inequality \eqref{T_else} with noticing $\frac{m+2}{(m+1)!}\leqslant \frac{3}{2}$ yields the desired result. 
\end{proof}

We now consider the second term of the right-hand side of \eqref{est1}, which involves $U_l$ defined by \eqref{def_U_l}. Note that when $h=\frac{2}{p}$ and $\nu-\mu\leqslant p-1$, there holds
$$
|U_l|=\left|h\sum_{j=\mu}^{\nu} \psi_{j+1/2}^{(2l)}\right|\leqslant \frac{2(\nu-\mu+1)\|\psi^{(2l)}\|_\infty}{p}\leqslant 2\|\psi^{(2l)}\|_\infty,\quad l\in \mathbb{N}_{\tilde{m}}. 
$$
We will see later that $\|\psi^{(2l)}\|_\infty$ is in the order of $\kappa^{2l}$, which will lead to the conclusion  that the second term of the right-hand side of \eqref{est1} has a leading term in the order of $\kappa^{2}$. We would like to obtain a better estimate so that the leading term is in the order of $\kappa$, not $\kappa^2$. 
%
%
To this end, we need the Taylor expansion of the function $\psi^{(2l)}, l\in \mathbb{N}_{\tilde{m}}$, at the midpoint of the every subinterval. For each $j \in \mathbb{Z}_{[\mu,\nu]}$, and $l \in \mathbb{N}_{\tilde{m}}$, the Taylor Theorem ensures that for each $t \in [s_j, s_{j+1}]$, there exists a $\xi_{l,t}$ between $t$ and $s_{j+1/2}$ such that
\begin{equation}
    \psi^{(2l)}(t) = \psi_{j+1/2}^{(2l)} + \sum_{b=1}^{m-2l-1} \frac{(t-s_{j+1/2})^{(b)}}{b!}\psi^{(2l+b)}_{j+1/2} + \frac{(t-s_{j+1/2})^{m-2l}}{(m-2l)!} \psi^{(m)}(\xi_{l,t}). \label{another_Taylor}
\end{equation}
Integrating both sides of equation \eqref{another_Taylor} from $s_j$ to $s_{j+1}$ and defining
$$
R_{j,l} := \int_{s_j}^{s_{j+1}} \frac{(t-s_{j+1/2})^{m-2l}}{(m-2l)!} \psi^{(m)}(\xi_{l,t}) \mathrm{d}t,
$$ 
we obtain that
\begin{equation}\label{psi-l}
    \psi^{(2l-1)}(s_{j+1}) - \psi^{(2l-1)}(s_{j}) = h\psi_{j+1/2}^{(2l)} + \sum_{b=1}^{\tilde{m}-l} \frac{2}{(2b+1)!}\left(\frac{h}{2}\right)^{2b+1} \psi_{j+1/2}^{(2l+2b)} + R_{j,l}, \quad j \in \mathbb{Z}_{[\mu,\nu]}. 
\end{equation}
Summing up equation \eqref{psi-l} for $j \in \mathbb{Z}_{[\mu,\nu]}$, together with the definition \eqref{def_eta_sequence} of $\eta_l$,  we obtain that
\begin{equation*} 
\psi^{(2l-1)}(s_{\nu+1}) - \psi^{(2l-1)}(s_\mu) = h\sum_{j=\mu}^{\nu} \psi^{(2l)}_{j+1/2} + \sum_{j=\mu}^{\nu}\sum_{b=1}^{\tilde{m}-l} \frac{\eta_b}{2b} h^{2b+1} \psi_{j+1/2}^{(2l+2b)} + \sum_{j=\mu}^{\nu} R_{j,l}, \quad l \in \mathbb{N}_{\tilde{m}}. 
\end{equation*}
With the definition of $U_l, l\in \mathbb{N}_{\tilde{m}}$ in equation \eqref{def_U_l}, the above equation can be rewritten as 
\begin{equation}
    U_{l} + \sum_{b=1}^{\tilde{m}-l} \frac{\eta_b}{2b} h^{2b} U_{b+l}=\psi^{(2l-1)}(s_{\nu+1}) - \psi^{(2l-1)}(s_\mu)-\sum_{j=\mu}^{\nu} R_{j,l},\quad l \in \mathbb{N}_{\tilde{m}}. \label{important_relation}
\end{equation}

We now return to investigating the second term of the right-hand side of equation \eqref{est1}.

\begin{lemma} If
\begin{equation}
    \delta_l := \eta_l - \sum_{b=1}^{l-1} \frac{\eta_{l-b}\delta_{b}}{2l-2b}, \quad l \in \mathbb{N},
 \label{def_sequence_delta}
\end{equation}
then 
\begin{equation}
    \sum_{l=1}^{\tilde{m}} \eta_l h^{2l}U_{l}=\sum_{l=1}^{\tilde{m}} \delta_l h^{2l}\left(\psi^{(2l-1)}(s_{\nu+1}) - \psi^{(2l-1)}(s_\mu)-\sum_{j=\mu}^{\nu} R_{j,l}\right). \label{U_O_V}
\end{equation} 
\end{lemma}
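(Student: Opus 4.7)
The strategy is to substitute the identity \eqref{important_relation} directly into the right-hand side of \eqref{U_O_V}, then reorganize the resulting double sum and collect the coefficient of each $U_k$. The recursion \eqref{def_sequence_delta} defining $\delta_l$ is tailor-made so that these coefficients reduce exactly to $\eta_k h^{2k}$, producing the left-hand side.

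Concretely, by \eqref{important_relation} the parenthetical expression on the right-hand side of \eqref{U_O_V} equals $U_l + \sum_{b=1}^{\tilde{m}-l} \tfrac{\eta_b}{2b} h^{2b} U_{b+l}$ for every $l\in\mathbb{N}_{\tilde{m}}$. Substituting this and splitting the product yields
\[
\sum_{l=1}^{\tilde{m}} \delta_l h^{2l}\Bigl(\psi^{(2l-1)}(s_{\nu+1}) - \psi^{(2l-1)}(s_\mu)-\sum_{j=\mu}^{\nu} R_{j,l}\Bigr)
= \sum_{l=1}^{\tilde{m}} \delta_l h^{2l} U_l + \sum_{l=1}^{\tilde{m}}\sum_{b=1}^{\tilde{m}-l} \frac{\delta_l \eta_b}{2b}\, h^{2(l+b)} U_{l+b}.
\]
The next step is to change variables in the double sum by setting $k:=l+b$, so that $(l,b)$ with $1\le l\le \tilde{m}$ and $1\le b\le \tilde{m}-l$ is reindexed by $(l,k)$ with $2\le k\le \tilde{m}$ and $1\le l\le k-1$. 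After swapping the order of summation and factoring out $h^{2k}U_k$, the right-hand side becomes
\[
\delta_1 h^2 U_1 + \sum_{k=2}^{\tilde{m}} h^{2k}\, U_k\left(\delta_k + \sum_{l=1}^{k-1} \frac{\eta_{k-l}\,\delta_l}{2(k-l)}\right).
\]

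To finish, I would invoke the definition \eqref{def_sequence_delta}, which states precisely $\delta_k + \sum_{l=1}^{k-1} \tfrac{\eta_{k-l}\delta_l}{2(k-l)} = \eta_k$ for every $k\in\mathbb{N}$ (with the convention that the empty sum at $k=1$ gives $\delta_1=\eta_1$). Substituting this identity collapses the coefficient of $h^{2k}U_k$ to $\eta_k$ for each $k\in\mathbb{N}_{\tilde{m}}$, so the expression reduces to $\sum_{k=1}^{\tilde{m}}\eta_k h^{2k}U_k$, which is the left-hand side of \eqref{U_O_V}.

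Conceptually, the manipulation amounts to inverting the upper-triangular linear relation \eqref{important_relation} between the vectors $(U_l)$ and $(G_l)$: the sequence $(\delta_l)$ is nothing but the coefficients that express the linear functional $U\mapsto\sum_l \eta_l h^{2l} U_l$ in terms of the $G_l$'s. The only real obstacle is bookkeeping—keeping the index change $k=l+b$ and the re-indexing of the $\delta$-recursion straight—since no analytic estimate is required at this stage.
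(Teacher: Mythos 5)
Your proof is correct and follows essentially the same route as the paper: substitute \eqref{important_relation} into the right-hand side, rearrange the double sum so that the coefficient of each $h^{2k}U_k$ appears, and then invoke the recursion \eqref{def_sequence_delta} to reduce that coefficient to $\eta_k$. The paper simply labels the index-change $k=l+b$ and the summation swap a ``direct computation,'' whereas you have written out that bookkeeping explicitly; there is no substantive difference.
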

\begin{proof}
According to equation \eqref{important_relation}, it suffices to prove that
\begin{equation} 
\sum_{l=1}^{\tilde{m}} \eta_l  h^{2l}U_{l}=\sum_{l=1}^{\tilde{m}} \delta_l h^{2l} \left(U_{l}+\sum_{b=1}^{\tilde{m}-l} \frac{\eta_b}{2b} h^{2b} U_{b+l}\right) . \label{U_O_V:equ1}
\end{equation}
By direct computation,  we observe that
\begin{equation} 
\sum_{l=1}^{\tilde{m}} \delta_l h^{2l} \left(U_{l}+\sum_{b=1}^{\tilde{m}-l} \frac{\eta_b}{2b} h^{2b} U_{b+l}\right) = \sum_{l=1}^{\tilde{m}} \left(\delta_l+\sum_{b=1}^{l-1} \frac{\eta_{l-b}\delta_b}{2l-2b}\right) h^{2l}U_{l}. \label{lem_estimation2_T:equ3}
\end{equation}
By the definition \eqref{def_sequence_delta} of $\delta_l$, $l\in \mathbb{N}$, we have that
$$
\eta_l=\delta_l+\sum_{b=1}^{l-1} \frac{\eta_{l-b}\delta_b}{2l-2b},\quad l\in \mathbb{N}.
$$
Substituting this into the right-hand side of equation \eqref{lem_estimation2_T:equ3} yields the desired equation \eqref{U_O_V:equ1}.
\end{proof}

Substituting equation \eqref{U_O_V} into the second term in the right-hand side of inequality \eqref{est1} yields that
\begin{equation}
    |T| \leqslant \left|T+ \sum_{l=1}^{\tilde{m}} \eta_l h^{2l}U_{l}\right| + \sum_{l=1}^{\tilde{m}} |\delta_l| h^{2l}\left(|\psi^{(2l-1)}(s_{\nu+1}) - \psi^{(2l-1)}(s_\mu)|+\sum_{j=\mu}^{\nu} |R_{j,l}|\right).  \label{main3_T}
\end{equation}
Substituting
\begin{equation*}
    |\psi^{(2l-1)}(s_{\nu+1}) - \psi^{(2l-1)}(s_\mu)| \leqslant 2 \|\psi^{(2l-1)}\|_\infty, 
\end{equation*}
$$
|R_{j,l}| \leqslant \frac{\|\psi^{(m)}\|_\infty}{(m-2l)!}\int_{s_j}^{s_{j+1}}|t-s_{j+1/2}|^{m-2l}\mathrm{d}t = \frac{h^{m-2l+1}\|\psi^{(m)}\|_\infty}{2^{m-2l}(m-2l+1)!} \leqslant 2\|\psi^{(m)}\|_\infty\left(\frac{h}{2}\right)^{m-2l+1}, 
$$
and estimate \eqref{form_T} into the right-hand side of inequality \eqref{main3_T}, we obtain that
\begin{equation}
    |T| \leqslant 2\sum_{l=1}^{\tilde{m}} h^{2l}|\delta_l| \|\psi^{(2l-1)}\|_\infty + (\nu-\mu+1)\left(3+2\sum_{l=1}^{\tilde{m}}4^l|\delta_l|\right) \|\psi^{(m)}\|_\infty\left(\frac{h}{2}\right)^{m+1}. \label{lem_estimation2_main_T:equ1}
\end{equation}

By the above inequality, we estimate the growth of derivatives of function $\psi$ in the following lemma.

\begin{lemma} \label{lem_derivative_of_T}
If $\psi$ is defined in equation \eqref{def_psi}, then
\begin{equation}
    \| \psi^{(l)}\|_\infty\leqslant \tau [(\Gamma+2)\kappa+1]^l,\quad l\in \mathbb{Z}_{m+1}, \label{psi_dev_estimation}
\end{equation}
where $\|\psi^{(m)}\|_\infty:=\|\psi^{(m)}\|_{L_\infty([s_\mu, s_{\nu+1}])}$.
\end{lemma}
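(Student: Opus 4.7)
The plan is to apply the Leibniz product rule on each of the two subdomains of $\psi$. On $[s_0, s_{j^*}]$, $\psi$ has the form $w(t) e^{i\beta \kappa t}$ with $\beta = \alpha - 1$, and on $[s_{j^*+1}, s_p]$ it has the same form with $\beta = \alpha + 1$. Since $|\alpha| \leq 1 + \Gamma$, the triangle inequality gives $|\beta| \leq |\alpha| + 1 \leq \Gamma + 2$ in either case. Moreover, since $[s_\mu, s_{\nu+1}]$ sits entirely in one of these two subintervals (by the hypothesis $j^* \notin [\mu,\nu]$ used earlier), a bound proved uniformly on each subinterval transfers directly to $\|\psi^{(l)}\|_{L_\infty([s_\mu, s_{\nu+1}])}$.

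First I would write, by the Leibniz rule, for any $l \in \mathbb{Z}_{m+1}$,
\begin{equation*}
\frac{d^l}{dt^l}\bigl[w(t) e^{i\beta \kappa t}\bigr] = \sum_{k=0}^{l} \binom{l}{k} w^{(k)}(t)\, (i\beta \kappa)^{l-k} e^{i\beta \kappa t}.
\end{equation*}
Taking moduli and using $|e^{i\beta \kappa t}| = 1$ together with the hypothesis $|w^{(k)}(t)| \leq \tau$ for all $k \in \mathbb{Z}_{m+1}$ yields
\begin{equation*}
\left|\frac{d^l}{dt^l}\bigl[w(t) e^{i\beta \kappa t}\bigr]\right| \leq \tau \sum_{k=0}^{l} \binom{l}{k} (|\beta|\kappa)^{l-k}\cdot 1^{k} = \tau (|\beta|\kappa + 1)^{l}
\end{equation*}
by the binomial theorem. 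Since $|\beta| \leq \Gamma + 2$ on each of the two pieces, we obtain the uniform bound
\begin{equation*}
\left|\psi^{(l)}(t)\right| \leq \tau\bigl[(\Gamma+2)\kappa + 1\bigr]^{l}
\end{equation*}
on both $[s_0, s_{j^*}]$ and $[s_{j^*+1}, s_p]$, which gives the claimed estimate on $[s_\mu, s_{\nu+1}]$.

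There is no real obstacle here beyond bookkeeping; the crux is just recognizing that the $1^k$ factor coming from $|w^{(k)}(t)| \leq \tau$ is what promotes the binomial sum $\sum_k \binom{l}{k}(|\beta|\kappa)^{l-k}$ into the closed form $(|\beta|\kappa + 1)^l$. This is also why the stated bound has the shape $[(\Gamma+2)\kappa + 1]^l$ rather than the cruder $(\Gamma+3)^l \kappa^l$ one would get by pulling $\kappa$ out of each summand — retaining the ``$+1$'' is what will later allow the analysis to yield a leading term of order $\kappa$ rather than higher powers when this bound is substituted into \eqref{lem_estimation2_main_T:equ1}.
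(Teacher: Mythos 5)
Your proof is correct and follows essentially the same approach as the paper: both repeatedly differentiate the product $w(t)e^{i\beta\kappa t}$, bound the sum of the magnitudes of the resulting coefficients by $(1+|\beta|\kappa)^l$, and then use $|\beta|\leqslant\Gamma+2$. The only difference is that you identify the coefficients in closed form via the Leibniz rule and apply the binomial theorem directly, whereas the paper defines the coefficients $C_{l,d}$ through a recurrence and establishes $\sum_d|C_{l,d}|\leqslant(1+|\alpha-1|\kappa)^l$ by induction — your closed form shows this inequality is in fact an equality, so your version is slightly more direct but not a different route.
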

\begin{proof}
Without loss of generality, we will assume that $0\leqslant \mu \leqslant \nu\leqslant j^*-1$. In this case, the definition \eqref{def_psi} of $\psi$ yields that
$\psi(t)=w(t)e^{i(\alpha-1)\kappa t}$, $t\in [s_\mu, s_{\nu+1}]$.
Repeatedly differentiating $\psi$ yields
\begin{equation}
    \psi^{(l)}(s) = e^{i(\alpha-1) \kappa s}\sum_{d=0}^{l} C_{l,d} w^{(d)}(s), \quad s\in [s_\mu, s_\nu], \quad l\in \mathbb{Z}_{m+1}, \label{lem_h1_kappa_equ1}
\end{equation}
where $C_{0,0}:=1$ and  for $l\in \mathbb{N}_{m}$,
$$
C_{l, d}:=\begin{cases}
i (\alpha-1)\kappa C_{l-1,d}, &  d=0,\\
i(\alpha-1)\kappa C_{l-1,d}+C_{l-1,d-1}, &  d = 1,2,\dots, l-1,\\
C_{l-1,d-1}, &  d=l.
\end{cases}
$$
It can be shown by induction on $l$ that
\begin{equation}
    \sum_{d=0}^{l}|C_{l, d}| \leqslant (1+|\alpha-1|\kappa)^l, \quad l\in \mathbb{Z}_{m+1}. \label{lem_h1_kappa_equ2}
\end{equation}
Now, by applying the $L_\infty$-norm of $\psi^{(l)}$ to equation \eqref{lem_h1_kappa_equ1}, we obtain that
\begin{equation}
    \|\psi^{(l)}\|_\infty  \leqslant \sum_{d=0}^{l}|C_{l,d}| \|w^{(d)}\|_\infty, \quad l\in \mathbb{Z}_{m+1}. \label{lem_h1_kappa_equ3}
\end{equation}
Substituting \eqref{requirement} and  \eqref{lem_h1_kappa_equ2} into the right-hand side of \eqref{lem_h1_kappa_equ3} yields 
$$
\| \psi^{(l)}\|_\infty\leqslant \tau (1+|\alpha-1|\kappa)^l,\quad l\in \mathbb{Z}_{m+1}.   
$$
The above inequality with $|\alpha-1|\leqslant |\alpha|+1\leqslant \Gamma+2$ leads to the desired estimate.
\end{proof}

We next estimate $|T|$. To this end, we define $\Delta(\tilde m):=\sum_{l=1}^{\tilde{m}}4^l|\delta_l|$.

\begin{lemma} \label{lem_estimation_main_T}
If $p\in \mathbb{N}$ is chosen to satisfy $p\geqslant (\Gamma+2)\kappa+1$, then
\begin{equation}
    |T| \leqslant \frac{2\tau ((\Gamma+2)\kappa+1)}{p^2}\Delta(\tilde m)+\left(3+2\Delta(\tilde m)\right)\frac{\tau }{p} \left(\frac{(\Gamma+2)\kappa+1}{p}\right)^m.  \label{main_T}
\end{equation}
\end{lemma}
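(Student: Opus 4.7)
The plan is to start from the inequality \eqref{lem_estimation2_main_T:equ1} already assembled in the preceding discussion,
\begin{equation*}
|T| \leqslant 2\sum_{l=1}^{\tilde{m}} h^{2l}|\delta_l|\,\|\psi^{(2l-1)}\|_\infty + (\nu-\mu+1)\bigl(3+2\Delta(\tilde m)\bigr)\|\psi^{(m)}\|_\infty\left(\frac{h}{2}\right)^{m+1},
\end{equation*}
and feed into it the derivative bound $\|\psi^{(l)}\|_\infty\leqslant \tau A^l$ supplied by Lemma \ref{lem_derivative_of_T}, where for brevity I set $A:=(\Gamma+2)\kappa+1$. After that, the remainder of the argument is essentially bookkeeping: substitute $h=2/p$, pull constants into $\Delta(\tilde m)$, and use the two elementary estimates $\nu-\mu+1\leqslant p$ (forced by $0\leqslant\mu\leqslant\nu\leqslant p-1$) and $A/p\leqslant 1$, the latter being exactly the hypothesis $p\geqslant (\Gamma+2)\kappa+1$ of the lemma.

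For the first sum, substituting $h^{2l}=4^l/p^{2l}$ and $\|\psi^{(2l-1)}\|_\infty\leqslant \tau A^{2l-1}$ rewrites it as
\begin{equation*}
\frac{2\tau}{A}\sum_{l=1}^{\tilde m} 4^l|\delta_l|\left(\frac{A}{p}\right)^{\!2l}.
\end{equation*}
The hypothesis $p\geqslant A$ is precisely what is needed here: it yields $A/p\leqslant 1$, so $(A/p)^{2l}\leqslant (A/p)^2$ for every $l\geqslant 1$. Factoring this leading power out and recognising $\Delta(\tilde m)=\sum_l 4^l|\delta_l|$ produces the first summand $2\tau A\Delta(\tilde m)/p^2$ of the stated bound. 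The second term of \eqref{lem_estimation2_main_T:equ1} is then handled by direct substitution of $\|\psi^{(m)}\|_\infty\leqslant \tau A^m$ and $(h/2)^{m+1}=1/p^{m+1}$, together with $\nu-\mu+1\leqslant p$, which collapses into a constant multiple of $\tau (A/p)^m$ and, arranged as in the statement, yields the second summand.

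I expect the computation to be routine; no new analytic or combinatorial insight is required beyond Lemmas \ref{lem_estimation_T} and \ref{lem_derivative_of_T}. The only place where real work happens is the collapse of the first sum, and the main obstacle is conceptual rather than technical: one must recognise that the threshold $p\geqslant (\Gamma+2)\kappa+1$ is exactly what keeps the geometric series in $(A/p)^{2l}$ under control and thereby prevents the leading $\kappa$-dependence from deteriorating from $O(\kappa/p^2)$ to $O(\kappa^{2\tilde m}/p^{2\tilde m})$. Once this observation is in hand, the rest of the proof is a tidy substitution.
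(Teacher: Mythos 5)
Your treatment of the first term is correct and coincides with the paper's route. The problem is the second term.

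The second term of \eqref{lem_estimation2_main_T:equ1} carries the explicit factor $(\nu-\mu+1)$:
\begin{equation*}
(\nu-\mu+1)\bigl(3+2\Delta(\tilde m)\bigr)\|\psi^{(m)}\|_\infty\left(\frac{h}{2}\right)^{m+1}.
\end{equation*}
You replace $\nu-\mu+1$ by the crude bound $p$, substitute $\|\psi^{(m)}\|_\infty\leqslant\tau A^m$ with $A:=(\Gamma+2)\kappa+1$, and use $(h/2)^{m+1}=p^{-(m+1)}$, which gives
\begin{equation*}
p\cdot\bigl(3+2\Delta(\tilde m)\bigr)\,\tau A^m\,p^{-(m+1)}
=\bigl(3+2\Delta(\tilde m)\bigr)\,\tau\left(\frac{A}{p}\right)^m.
\end{equation*}
But the second summand claimed in \eqref{main_T} is $\bigl(3+2\Delta(\tilde m)\bigr)\,\frac{\tau}{p}\left(\frac{A}{p}\right)^m$, which carries an extra $1/p$. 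Your bound is therefore a full factor of $p$ larger than the lemma asserts, and your closing claim that the computation ``yields the second summand'' does not hold.

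The discrepancy is about \emph{where} the factor $(\nu-\mu+1)$ gets absorbed. In the paper's scheme this factor is kept through the estimate: Lemma \ref{lem_estimation2_main_T}, which is obtained from the present lemma by applying Lemma \ref{lem_seq}, has second term $\frac{27(\nu-\mu+1)\tau}{5p}\left(\frac{A}{p}\right)^m$, and Proposition \ref{prop_estimation_chi} only later uses $j^*+(p-j^*-1)\leqslant p$ after summing the $T_1$ and $T_3$ bounds. Discarding the information $\nu-\mu+1$ at the present stage by bounding it by $p$ pays a factor of $p$ that the stated bound does not pay; applied later to $T_1$ and $T_3$ separately it costs an extra factor of $2$. (Note also that the displayed inequality \eqref{main_T} in the paper, as well as the corresponding display in the paper's own proof, appear to have silently dropped the factor $(\nu-\mu+1)$; the form consistent with Lemma \ref{lem_estimation2_main_T} and with carrying the factor through is
\begin{equation*}
|T|\leqslant \frac{2\tau A}{p^2}\,\Delta(\tilde m)+(\nu-\mu+1)\bigl(3+2\Delta(\tilde m)\bigr)\frac{\tau}{p}\left(\frac{A}{p}\right)^m,
\end{equation*}
which follows immediately from \eqref{lem_estimation2_main_T:equ1} and Lemma \ref{lem_derivative_of_T} without ever invoking $\nu-\mu+1\leqslant p$, and is what your argument should be aiming for.)
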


\begin{proof}
We prove this lemma by employing inequality \eqref{lem_estimation2_main_T:equ1}.
As for the first term in the right-hand side of inequality \eqref{lem_estimation2_main_T:equ1}, by inequality \eqref{psi_dev_estimation} in Lemma \ref{lem_derivative_of_T} and the fact $h=2/p$, with noting $p\geqslant (\Gamma+2)\kappa+1$, we observe that
\begin{flalign*}
    2\sum_{l=1}^{\tilde{m}} h^{2l}|\delta_l| \|\psi^{(2l-1)}\|_\infty&\leqslant \frac{2\tau ((\Gamma+2)\kappa+1)}{p^2}\sum_{l=1}^{\tilde{m}}4^l|\delta_l|\left(\frac{(\Gamma+2)\kappa+1}{p}\right)^{2l-2}\\
    &\leqslant \frac{2\tau ((\Gamma+2)\kappa+1)}{p^2}\Delta(\tilde{m}).   
\end{flalign*}
As for the second term in the right-hand side of inequality \eqref{lem_estimation2_main_T:equ1}, for the same reason, we obtain that
\begin{flalign*}
    \left(3+2\Delta(\tilde m)\right) \|\psi^{(m)}\|_\infty\left(\frac{h}{2}\right)^{m+1}&\leqslant \left(3+2\Delta(\tilde m)\right)\frac{\tau }{p} \left(\frac{(\Gamma+2)\kappa+1}{p}\right)^m.   
\end{flalign*}
Substituting the above two inequalities into the right-hand side of inequality \eqref{lem_estimation2_main_T:equ1} yields the desired estimate \eqref{main_T}.
\end{proof}

Note that $\Delta(\tilde m)\leqslant\sum_{l=1}^\infty 4^l|\delta_l|$, which is estimated in the next lemma.

\begin{lemma} \label{lem_seq}
If $\delta_l$,  $l \in \mathbb{N}$, is the sequence defined in \eqref{def_sequence_delta}, then
\begin{equation*}
    \sum_{l=1}^\infty 4^l|\delta_l| \leqslant \frac{6}{5}. 
\end{equation*}
\end{lemma}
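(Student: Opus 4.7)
The plan is to clean up the recurrence by absorbing the powers of $4$, apply the triangle inequality term by term, and then sum in $l$ to obtain a self-referential inequality for $S := \sum_{l=1}^\infty 4^l |\delta_l|$ that can be solved in closed form.

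First, set $a_l := 4^l \delta_l$. Since $4^l \eta_l = 2l/(2l+1)!$, multiplying \eqref{def_sequence_delta} by $4^l$ turns it into
$$
a_l \;=\; \frac{2l}{(2l+1)!} \;-\; \sum_{b=1}^{l-1} \frac{1}{(2(l-b)+1)!}\, a_b, \qquad l\in\mathbb{N},
$$
where the factor $1/(2(l-b)+1)!$ comes from $4^{l-b}\eta_{l-b}/(2(l-b)) = 1/(2(l-b)+1)!$. Set $c_l:= 2l/(2l+1)!$ and $\tilde c_k := 1/(2k+1)!$. Applying the triangle inequality yields $|a_l| \leq c_l + \sum_{b=1}^{l-1} \tilde c_{l-b}|a_b|$.

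Second, I would sum this inequality over $l \in \mathbb{N}$. All terms are non-negative, so Tonelli allows swapping the order of summation in the double series, giving
$$
S \;\leq\; \sum_{l=1}^\infty c_l \;+\; \Big(\sum_{k=1}^\infty \tilde c_k\Big) S.
$$
The two constants evaluate in closed form via the Taylor series for $\sinh$ and $\cosh$:
$$
\sum_{l=1}^\infty \frac{2l}{(2l+1)!} \;=\; \sum_{l=1}^\infty\!\Big(\tfrac{1}{(2l)!} - \tfrac{1}{(2l+1)!}\Big) \;=\; \cosh 1 - \sinh 1 \;=\; \tfrac{1}{e},
$$
and $\sum_{k=1}^\infty 1/(2k+1)! = \sinh 1 - 1$.

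Third, rearranging gives $(2-\sinh 1)\,S \leq 1/e$, hence $S \leq 1/\bigl(e(2-\sinh 1)\bigr)$. To conclude $S \leq 6/5$, it suffices to check $e(2-\sinh 1)\geq 5/6$. This follows from the elementary bounds $e > 5/2$ and $\sinh 1 < 6/5$ (the latter is equivalent to $5e^2-12e-5<0$, whose positive root exceeds $2.76 > e$), which together give $e(2-\sinh 1) > (5/2)(4/5) = 2$.

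The main obstacle is conceptual: the $\delta_l$ do not have a constant sign (in fact, one can recognize $a_l = 4^l B_{2l}/(2l)!$ so they alternate with $l$), which precludes reading the bound off directly from a generating-function identity evaluated at $x=1$. The triangle-inequality-then-sum trick bypasses this because the coefficient $\sinh 1 - 1 \approx 0.175$ appearing on the right is strictly less than $1$, so the resulting linear inequality in $S$ can be inverted with a very comfortable margin relative to the target $6/5$.
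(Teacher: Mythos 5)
Your argument is correct in substance and takes a genuinely different, more direct route than the paper's. The paper first introduces a non-negative majorant sequence $\tilde\delta_l$ obtained by bounding $4^b\eta_b = \tfrac{2b}{(2b+1)!}$ above by $\tfrac{1}{(2b)!}$, proves $4^l|\delta_l|\leqslant\tilde\delta_l$ by induction, and then bounds the partial sums $\Delta_l = \sum_{b\leqslant l}\tilde\delta_b$ via the self-referential inequality $\Delta_l\leqslant (\cosh 1 - 1)(\Delta_l+1)$. You instead keep the exact coefficient $\tfrac{1}{(2(l-b)+1)!}$, apply the triangle inequality to the normalized recurrence for $a_l = 4^l\delta_l$, and sum directly; the telescoping identity $\tfrac{2l}{(2l+1)!} = \tfrac{1}{(2l)!}-\tfrac{1}{(2l+1)!}$ collapses the forcing term to $e^{-1}$ and the convolution kernel to $\sinh 1 - 1$. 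Because $\sinh 1 - 1 \approx 0.175$ is considerably smaller than $\cosh 1 - 1 \approx 0.543$, you end up with the sharper bound $S\leqslant \tfrac{1}{e(2-\sinh 1)} \approx 0.446$, which comfortably implies the stated $\tfrac{6}{5}$; the paper's bound $\tfrac{\cosh 1 - 1}{2-\cosh 1}\approx 1.19$ is much closer to $\tfrac{6}{5}$. Your approach is cleaner and buys a better constant for essentially the same work.

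One technical point you should close: the step from $S \leqslant e^{-1} + (\sinh 1 - 1)S$ to $S(2-\sinh 1)\leqslant e^{-1}$ is only valid if you already know $S<\infty$; if $S=+\infty$, the inequality after Tonelli reads $\infty\leqslant\infty$ and yields nothing. The standard fix, and the one the paper implicitly uses by working with the partial sums $\Delta_l$, is to run your argument on $S_L := \sum_{l=1}^L |a_l|$. Since $\sum_{l=1}^L\sum_{b=1}^{l-1}\tilde c_{l-b}|a_b|\leqslant S_L\sum_{k\geqslant 1}\tilde c_k = S_L(\sinh 1 - 1)$ and $\sum_{l=1}^L c_l\leqslant e^{-1}$, each finite $S_L$ satisfies $S_L(2-\sinh 1)\leqslant e^{-1}$, giving a bound uniform in $L$; letting $L\to\infty$ then yields $S\leqslant \tfrac{1}{e(2-\sinh 1)}$. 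With that one-line patch the proof is complete.
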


\begin{proof}
Defining
\begin{equation}
    \tilde{\delta}_l := \frac{1}{(2l)!} + \sum_{b=1}^{l-1} \frac{\tilde{\delta}_{l-b}}{(2b)!}, \quad l \in \mathbb{N}, \label{def_sequence_tildedelta}
\end{equation}
this lemma may be proved by establishing the two estimates
\begin{equation}
    4^l|\delta_l| \leqslant \tilde{\delta}_l, \quad l \in \mathbb{N} \label{lem_seq_to_prove1}
\end{equation}
and 
\begin{equation}
    \sum_{l=1}^{\infty} \tilde{\delta_l} \leqslant \frac{6}{5}. \label{lem_seq_to_prove2}
\end{equation}

First, we establish \eqref{lem_seq_to_prove1} via induction on $l \in \mathbb{N}$. By definition \eqref{def_sequence_delta}, the sequence $\delta_l$, $l \in \mathbb{N}$,  can be written as
\begin{equation}
\delta_{l} = \eta_{l} - \sum_{b=1}^{l-1} \frac{\delta_{l-b}\eta_{b}}{2b},\quad l \in \mathbb{N}. \label{another_form_delta}
\end{equation}
For $l = 1$, it is clear that $4|\delta_1| = \frac{1}{3} \leqslant \tilde{\delta}_1 = \frac{1}{2}$. Now, we assume for some $l^* \in \mathbb{N}$ that
\begin{equation}
    4^b|\delta_{b}| \leqslant \tilde{\delta}_{b},\quad b \in \mathbb{N}_{l^*} \label{assumation}
\end{equation}
and consider the case $l = l^* + 1$. Utilizing equation \eqref{another_form_delta} with $l := l^* + 1$ and the fact $4^b\eta_b = \frac{2b}{(2b+1)!} \leqslant \frac{1}{(2b)!}$, $b \in \mathbb{N}$, we derive that
\begin{align*}
    4^{l^*+1}|\delta_{l^*+1}| \leqslant \frac{1}{(2l^*+2)!} + \sum_{b=1}^{l^*}  \frac{4^{l^*+1-b}|\delta_{l^*+1-b}|}{(2b)!}.
\end{align*}
Using \eqref{assumation}, the above inequality becomes
$$ 
|4^{l^*+1}\delta_{l^*+1}| \leqslant \frac{1}{(2l^*+2)!} + \sum_{b=1}^{l^*}  \frac{\tilde{\delta}_{l^*+1-b}}{(2b)!}, 
$$
whose right-hand side is exactly equal to  $\tilde{\delta}_{l^*+1}$ by definition \eqref{def_sequence_tildedelta}. Thus, we have proved  inequality \eqref{lem_seq_to_prove1} for the case $l := l^* + 1$. By the induction principle,  inequality \eqref{lem_seq_to_prove1} holds for all $l\in \mathbb{N}$.

We now prove  \eqref{lem_seq_to_prove2}.
By defining a positive monotonically increasing sequence 
\begin{equation}
    \Delta_l := \sum_{b=1}^l \tilde{\delta}_b, \quad l \in \mathbb{N}, \label{def_sequence_Delta}
\end{equation}
it suffices to show
\begin{equation}
    \lim_{l\to \infty} \Delta_l \leqslant \frac{6}{5}. \label{lem_seq_equ0}
\end{equation}
We first derive a recursive formula for the sequence $\Delta_l$, $l \in \mathbb{N}$. To this end, we substitute definition \eqref{def_sequence_tildedelta} of $\tilde{\delta}_l$ into definition \eqref{def_sequence_Delta} of $\Delta_l$ to obtain that
\begin{equation}
    \Delta_l = \sum_{b=1}^l \tilde{\delta}_b = \sum_{b=1}^l \frac{1}{(2b)!} + \sum_{d=1}^l \sum_{b=1}^{d-1} \frac{\tilde{\delta}_{d-b}}{(2b)!}, \quad l \in \mathbb{N}. \label{lem_seq_equ01}
\end{equation}
Since for all $l \in \mathbb{N}$,
$$
\sum_{d=1}^l \sum_{b=1}^{d-1} \frac{\tilde{\delta}_{d-b}}{(2b)!} =\sum_{b=1}^{l-1} \sum_{d=b+1}^{l} \frac{\tilde{\delta}_{d-b}}{(2b)!}= \sum_{b=1}^{l-1}\frac{\Delta_{l-b}}{(2b)!}, 
$$
where the last equality holds due to definition \eqref{def_sequence_Delta} of $\Delta_l$, equation \eqref{lem_seq_equ01} becomes a recursion of $\Delta_l$:
\begin{equation}
    \Delta_l = \sum_{b=1}^l \frac{1}{(2b)!} + \sum_{b=1}^{l-1}\frac{\Delta_{l-b}}{(2b)!}, \quad l \in \mathbb{N}. \label{lem_seq_equ1}
\end{equation}
We observe the sequence $\Delta_l$, $l \in \mathbb{N}$ is monotonically increasing. In particular, for $l \geqslant 2$, we have that $\Delta_b \leqslant \Delta_l$ for all $b \in \mathbb{N}_{l-1}$. Consequently, equation \eqref{lem_seq_equ1} implies that
\begin{equation*}
    \Delta_l \leqslant \sum_{b=1}^l \frac{1}{(2b)!} + \Delta_l\sum_{b=1}^{l-1}\frac{1}{(2b)!}, \quad l \geqslant 2.
\end{equation*}
In light of the fact that $\Delta_l > 0$ for $l \in \mathbb{N}$ and  $\sum_{b=1}^\infty \frac{1}{(2b)!} = \cosh(1)-1$, we deduce
\begin{equation*}
    \Delta_l \leqslant (\cosh(1)-1)(\Delta_l+1), \quad l \geqslant 2,
\end{equation*}
namely, 
$$\Delta_l \leqslant \frac{\cosh(1)-1}{2-\cosh(1)}\leqslant \frac{6}{5}, \quad l \geqslant 2.$$
Thus, by the Monotone Convergence Theorem, we conclude the existence of the limit for sequence $\Delta_l$, $l \in \mathbb{N}$, and confirm inequality \eqref{lem_seq_equ0}.
\end{proof}

Combining Lemmas \ref{lem_estimation_main_T} and \ref{lem_seq} yields the following estimate of $|T|$.

\begin{lemma} \label{lem_estimation2_main_T}
If $p\in \mathbb{N}$ is chosen to satisfy $p\geqslant (\Gamma+2)\kappa+1$, then
\begin{equation}
    |T| \leqslant \frac{12\tau ((\Gamma+2)\kappa+1)}{5p^2}+\frac{27(\nu-\mu+1) \tau}{5p} \left(\frac{(\Gamma+2)\kappa+1}{p}\right)^m.  \label{main2_T}
\end{equation}
\end{lemma}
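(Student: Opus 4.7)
The statement is essentially a bookkeeping combination of the two preceding lemmas, so the plan is short: plug the numerical bound from Lemma~\ref{lem_seq} into the inequality produced by Lemma~\ref{lem_estimation_main_T}. The main step is to rewrite the constants $2\Delta(\tilde m)$ and $3+2\Delta(\tilde m)$ in terms of concrete numerics, retaining the combinatorial factor $(\nu-\mu+1)$ coming from the remainder terms in the Taylor expansion.

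First, I would recall the quantity $\Delta(\tilde m):=\sum_{l=1}^{\tilde m}4^l|\delta_l|$. Since every term in the sum is non-negative, the partial sum is bounded above by the full series, so Lemma~\ref{lem_seq} gives immediately
\[
\Delta(\tilde m)\;\leqslant\;\sum_{l=1}^{\infty}4^l|\delta_l|\;\leqslant\;\frac{6}{5}.
\]
From this I extract the two numerical consequences $2\Delta(\tilde m)\leqslant \tfrac{12}{5}$ and $3+2\Delta(\tilde m)\leqslant 3+\tfrac{12}{5}=\tfrac{27}{5}$.

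Second, I would go back to the intermediate inequality \eqref{lem_estimation2_main_T:equ1}, which under the hypothesis $p\geqslant (\Gamma+2)\kappa+1$ and via Lemma~\ref{lem_derivative_of_T} was refined in the proof of Lemma~\ref{lem_estimation_main_T} to give a first term bounded by $\frac{2\tau((\Gamma+2)\kappa+1)}{p^2}\Delta(\tilde m)$ and a second term bounded by $(\nu-\mu+1)(3+2\Delta(\tilde m))\frac{\tau}{p}\bigl(\tfrac{(\Gamma+2)\kappa+1}{p}\bigr)^m$. Substituting the two numerical bounds for $\Delta(\tilde m)$ above into these two terms respectively yields the target inequality \eqref{main2_T}.

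There is no real obstacle here: all the analytic work, including the Taylor expansions, the estimates of $\|\psi^{(l)}\|_\infty$, and the recursive bound on $|\delta_l|$, has already been done. The only point to watch is to keep the $(\nu-\mu+1)$ factor visible in the second term (rather than absorbing it into a generic $p$-bound), since the downstream error analysis for $|T_1|$ and $|T_3|$ will need to track how many subintervals are being summed over.
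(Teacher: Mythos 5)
Your proof is correct and takes exactly the paper's intended route: the paper states Lemma~\ref{lem_estimation2_main_T} as an immediate consequence of combining Lemma~\ref{lem_estimation_main_T} with Lemma~\ref{lem_seq}, and your two numerical substitutions $2\Delta(\tilde m)\leqslant 12/5$ and $3+2\Delta(\tilde m)\leqslant 27/5$ are precisely the bookkeeping step.

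One thing worth noting: the statement of Lemma~\ref{lem_estimation_main_T} as printed in the paper actually omits the factor $(\nu-\mu+1)$ from its second term, which is inconsistent with \eqref{lem_estimation2_main_T:equ1} and with the target \eqref{main2_T}. You avoided this pitfall by going back to \eqref{lem_estimation2_main_T:equ1} directly and carrying the $(\nu-\mu+1)$ through the estimate via $\|\psi^{(m)}\|_\infty\leqslant\tau[(\Gamma+2)\kappa+1]^m$ and $(h/2)^{m+1}=p^{-(m+1)}$, which is the correct reading of what Lemma~\ref{lem_estimation_main_T} should say. Your remark about keeping $(\nu-\mu+1)$ explicit rather than bounding it by $p$ is exactly the right instinct, since Proposition~\ref{prop_estimation_chi} needs the sum $j^*+(p-j^*-1)\leqslant p$ over the $|T_1|$ and $|T_3|$ contributions.
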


Finally, we have the next proposition for an estimate of $\|\mathcal{K}\chi_{\kappa}-\mathcal{K}_{p} \chi_{\kappa}\|_\infty$.

\begin{proposition}  \label{prop_estimation_chi}
For a complex-valued analytic $\chi_{\kappa}$ as defined in \eqref{def_chi}, for any $\kappa \geqslant 1$, if  parameter $p\in \mathbb{N}$ is selected to satisfy $p\geqslant (\Gamma+2)\kappa+1$, then
    \begin{equation*}
        \|\mathcal{K}\chi_{\kappa}-\mathcal{K}_{p} \chi_{\kappa}\|_\infty
    \leqslant \frac{44r\tau}{5p}+\frac{27r \tau }{5} \left(\frac{(\Gamma+2)\kappa+1}{p}\right)^m.
    \end{equation*}
\end{proposition}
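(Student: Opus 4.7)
The plan is to exploit the linearity of $\mathcal{K}$ and $\mathcal{K}_p$ together with the additive structure $\chi_\kappa(s) = \sum_{j=1}^r w_j(s) e^{i\alpha_j \kappa s}$ in \eqref{def_chi}, and reduce the proposition to $r$ copies of a single-term estimate $\|\mathcal{K}\tilde{\chi}_\kappa - \mathcal{K}_p \tilde{\chi}_\kappa\|_\infty$ where $\tilde{\chi}_\kappa$ has the form \eqref{def_tilde_chi}. Writing $\mathcal{K}\chi_\kappa - \mathcal{K}_p\chi_\kappa = \sum_{j=1}^r (\mathcal{K}\tilde{\chi}_{\kappa,j} - \mathcal{K}_p \tilde{\chi}_{\kappa,j})$ with $\tilde{\chi}_{\kappa,j}(t) := w_j(t) e^{i\alpha_j \kappa t}$, the triangle inequality shows it is enough to prove the single-term bound $\|\mathcal{K}\tilde{\chi}_\kappa - \mathcal{K}_p \tilde{\chi}_\kappa\|_\infty \leqslant \frac{44\tau}{5p} + \frac{27\tau}{5}\bigl(\frac{(\Gamma+2)\kappa+1}{p}\bigr)^m$ and then multiply by $r$.

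For the single-term bound, I would fix $s \in I$ and invoke the decomposition already set up before the statement, namely $(\mathcal{K}\tilde{\chi}_\kappa - \mathcal{K}_p \tilde{\chi}_\kappa)(s) = T_1 + T_2 + T_3$ from \eqref{def_T1}--\eqref{def_T3} with the index $j^*$ determined by $s$. Lemma \ref{lem_T2} immediately gives $|T_2| \leqslant 4\tau/p$. For $T_1$ and $T_3$, I would apply Lemma \ref{lem_estimation2_main_T} twice: once with $(\mu,\nu) = (0, j^*-1)$, which supplies $|T_1| \leqslant \frac{12\tau((\Gamma+2)\kappa+1)}{5p^2} + \frac{27 j^* \tau}{5p}\bigl(\frac{(\Gamma+2)\kappa+1}{p}\bigr)^m$, and once with $(\mu,\nu) = (j^*+1, p-1)$, giving the analogous bound for $|T_3|$ with the factor $j^*$ replaced by $p-1-j^*$. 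Because $j^* + (p-1-j^*) = p-1 \leqslant p$, the two leading $\kappa^m/p^{m+1}$ contributions add up to at most $\frac{27\tau}{5}\bigl(\frac{(\Gamma+2)\kappa+1}{p}\bigr)^m$.

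It remains to consolidate the $1/p$ pieces. The two $\frac{12\tau((\Gamma+2)\kappa+1)}{5p^2}$ contributions sum to $\frac{24\tau((\Gamma+2)\kappa+1)}{5p^2}$, and here the standing hypothesis $p \geqslant (\Gamma+2)\kappa+1$ enters: it yields $\frac{(\Gamma+2)\kappa+1}{p} \leqslant 1$, so this sum is at most $\frac{24\tau}{5p}$. Adding $|T_2| \leqslant \frac{4\tau}{p} = \frac{20\tau}{5p}$ produces $\frac{44\tau}{5p}$. Since $s \in I$ was arbitrary, we obtain the single-term bound; summing over $j \in \mathbb{N}_r$ produces the factor $r$ and finishes the proof.

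There is no real obstacle here: the analytical heart of the argument — the partitioned Taylor expansion, the recursion \eqref{def_sequence_delta}, the growth estimate for $\|\psi^{(l)}\|_\infty$, and the summability $\sum_l 4^l |\delta_l| \leqslant 6/5$ — is already packaged into Lemmas \ref{lem_T2}, \ref{lem_derivative_of_T}, \ref{lem_estimation_main_T}, \ref{lem_seq} and \ref{lem_estimation2_main_T}. The proof is therefore a short bookkeeping step: linearity plus the triangle inequality over the $r$ modes, the three-piece split $T_1 + T_2 + T_3$, and the observation $j^* + (p-1-j^*) \leqslant p$ that keeps the $\bigl(\frac{(\Gamma+2)\kappa+1}{p}\bigr)^m$ coefficient from blowing up with $p$.
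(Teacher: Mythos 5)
Your argument is correct and essentially identical to the paper's: both reduce to the single-term bound via linearity and the triangle inequality, split into $T_1 + T_2 + T_3$, invoke Lemma~\ref{lem_T2} and two instances of Lemma~\ref{lem_estimation2_main_T}, use $j^* + (p-1-j^*) \leqslant p$, and finally apply $p \geqslant (\Gamma+2)\kappa+1$ to collapse the $1/p^2$ terms. The only cosmetic difference is bookkeeping order (the paper factors the $1/p$ contributions as $\frac{4\tau}{p}\bigl(1+\frac{6((\Gamma+2)\kappa+1)}{5p}\bigr)$ before bounding, while you add them after bounding), which produces the same $\frac{44\tau}{5p}$.
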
 
\begin{proof}
It suffices to prove for the $\tilde{\chi}_{\kappa}$ as defined in \eqref{def_tilde_chi}, it holds that
\begin{equation}
\|\mathcal{K}\tilde{\chi}_{\kappa}-\mathcal{K}_{p} \tilde{\chi}_{\kappa}\|_\infty \leqslant \frac{44\tau}{5p}+\frac{27\tau}{5}\left(\frac{(\Gamma+2)\kappa+1}{p}\right)^m. \label{lem_estimation_tilde_chi_target}
\end{equation}
By letting $\mu:=0,\nu:=j^*-1$ and $\mu:=j^*+1, \nu:=p-1$ in the definition of $T$ in equation \eqref{def_T}, the inequality \eqref{main2_T} in Lemma  \ref{lem_estimation2_main_T} holds for $|T|=|T_1|$ and $|T|=|T_3|$ respectively, namely
     $$ |T_1| \leqslant \frac{12\tau ((\Gamma+2)\kappa+1)}{5p^2}+\frac{27j^* \tau}{5p} \left(\frac{(\Gamma+2)\kappa+1}{p}\right)^m$$
     and
     $$|T_3| \leqslant \frac{12\tau ((\Gamma+2)\kappa+1)}{5p^2}+\frac{27(p-j^*-1) \tau}{5p} \left(\frac{(\Gamma+2)\kappa+1}{p}\right)^m.$$
     Together with the estimate of $|T_2|$ in Lemma \ref{lem_T2} and the fact $j^*+(p-j^*-1)\leqslant p$, the inequality \eqref{estimate_T1_T2_T3} implies that
     $$|(\mathcal{K}\tilde{\chi}_{\kappa}-\mathcal{K}_{p} \tilde{\chi}_{\kappa})(s)| \leqslant \frac{4\tau}{p}\left(1+\frac{6 ((\Gamma+2)\kappa+1)}{5p}\right)+\frac{27 \tau}{5} \left(\frac{(\Gamma+2)\kappa+1}{p}\right)^m.$$
     The above equality holds for any fixed $s\in I$, with noting $p\geqslant (\Gamma+2)\kappa+1$, yielding the estimate \eqref{lem_estimation_tilde_chi_target}.

\end{proof}

Proposition \ref{prop_estimation_chi} elucidates the connection among the error, the sample size $p$ and the wavenumber $\kappa$. Next, we aim at ascertaining the value of $p$ as a function of the wavenumber $\kappa$ such that the error tends to zero as $\kappa$ approaches infinity. This will be expounded upon in the following proposition. Before this, for any $x\in \mathbb{R}$, $\lceil x \rceil$ denotes the smallest integer greater than or equal to  $x$

\begin{proposition} \label{prop_choose_parameter}
    For the function $\chi_{\kappa}$ as defined in \eqref{def_chi}, suppose the parameters $\beta$ and $\gamma$ are chosen to satisfy
$\beta \geqslant 1$ and $\gamma \geqslant \Gamma+3$.
If $p_{_\kappa} := \lceil \gamma \kappa^\beta \rceil$, then for any $\kappa \geqslant 1$, 
\begin{equation*}
        \|\mathcal{K}\chi_{\kappa} - \mathcal{K}_{p_{\kappa}} \chi_{\kappa}\|_\infty \leqslant \frac{44 r\tau}{5\gamma \kappa^\beta}+\frac{27r\tau(\Gamma+3)^m}{5\gamma^m \kappa^{m(\beta-1)}}. \label{prop_choose_parameter_target}
\end{equation*}
\end{proposition}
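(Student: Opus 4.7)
The plan is to invoke Proposition~\ref{prop_estimation_chi} with $p := p_{_\kappa} = \lceil \gamma \kappa^\beta \rceil$ and then simplify the resulting bound into the form claimed. The argument falls into three short steps, all of which are essentially arithmetic verifications built on the hypotheses $\beta \geqslant 1$, $\gamma \geqslant \Gamma+3$, and $\kappa \geqslant 1$.

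First, I would check the admissibility condition $p_{_\kappa} \geqslant (\Gamma+2)\kappa+1$ required to apply Proposition~\ref{prop_estimation_chi}. Since the ceiling satisfies $p_{_\kappa} \geqslant \gamma \kappa^\beta$, and since $\gamma \geqslant \Gamma+3$, $\beta \geqslant 1$, and $\kappa \geqslant 1$, we get
$$p_{_\kappa} \;\geqslant\; \gamma \kappa^\beta \;\geqslant\; (\Gamma+3)\kappa \;=\; (\Gamma+2)\kappa+\kappa \;\geqslant\; (\Gamma+2)\kappa+1.$$
This places $p_{_\kappa}$ inside the range of validity of Proposition~\ref{prop_estimation_chi}.

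Second, I would apply Proposition~\ref{prop_estimation_chi} with $p := p_{_\kappa}$ to obtain
$$\|\mathcal{K}\chi_{\kappa}-\mathcal{K}_{p_{_\kappa}} \chi_{\kappa}\|_\infty \;\leqslant\; \frac{44 r\tau}{5 p_{_\kappa}}+\frac{27 r \tau}{5}\left(\frac{(\Gamma+2)\kappa+1}{p_{_\kappa}}\right)^{m}.$$
I would then bound each of the two terms using $p_{_\kappa} \geqslant \gamma \kappa^\beta$. The first term is immediate: $1/p_{_\kappa} \leqslant 1/(\gamma \kappa^\beta)$. For the second term, I would replace $(\Gamma+2)\kappa+1$ by $(\Gamma+3)\kappa$, which is valid when $\kappa \geqslant 1$, to obtain
$$\frac{(\Gamma+2)\kappa+1}{p_{_\kappa}} \;\leqslant\; \frac{(\Gamma+3)\kappa}{\gamma \kappa^\beta} \;=\; \frac{\Gamma+3}{\gamma \kappa^{\beta-1}},$$
and raising to the $m$-th power yields the second summand in the stated inequality.

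There is no substantive obstacle here; the proposition is a bookkeeping corollary of Proposition~\ref{prop_estimation_chi}. The only points that deserve care are (i) using $\kappa \geqslant 1$ together with $\gamma \geqslant \Gamma+3$ in both steps, and (ii) tracking the power of $\kappa$ in the denominator so that $\kappa^\beta$ combines with $(\Gamma+2)\kappa$ to give $\kappa^{\beta-1}$ after the reduction. The condition $\beta \geqslant 1$ ensures that this exponent is nonnegative, so that the second term genuinely decays (or at worst remains bounded) as $\kappa \to \infty$.
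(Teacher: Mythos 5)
Your proof is correct and follows essentially the same route as the paper: verify the admissibility condition $p_{_\kappa} \geqslant (\Gamma+2)\kappa+1$, invoke Proposition~\ref{prop_estimation_chi} with $p := p_{_\kappa}$, and then simplify the two terms using $p_{_\kappa} \geqslant \gamma\kappa^\beta$ together with the bound $(\Gamma+2)\kappa+1 \leqslant (\Gamma+3)\kappa$ valid for $\kappa\geqslant 1$. The only cosmetic difference is that you verify admissibility by the direct chain $p_{_\kappa}\geqslant\gamma\kappa^\beta\geqslant(\Gamma+3)\kappa\geqslant(\Gamma+2)\kappa+1$, whereas the paper first establishes the inequality $\frac{(\Gamma+2)\kappa+1}{p_{_\kappa}}\leqslant\frac{\Gamma+3}{\gamma\kappa^{\beta-1}}$ (via a monotonicity observation) and then reads off admissibility from it; the substance is identical.
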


\begin{proof}
 First, we need to verity $p_{_{\kappa}}\geqslant (\Gamma+2)\kappa+1$ which is the condition of Lemma \ref{prop_estimation_chi}. For $\beta \geqslant 1$,  we consider $\frac{\kappa^{\beta-1} ((\Gamma+2)\kappa+1)}{ p_{_\kappa}}, \kappa\geqslant 1$. By the definition of $p_{_\kappa}$, it holds that 
 $$\frac{\kappa^{\beta-1} ((\Gamma+2)\kappa+1)}{ p_{_\kappa}}\leqslant \frac{(\Gamma+2)\kappa+1}{\gamma\kappa}, \quad \kappa\geqslant 1.$$
 Noting that the right hand side of above inequality equal to $\frac{\Gamma+2}{\gamma}+\frac{1}{\gamma\kappa}$ is a decreasing function for $\kappa\geqslant 1$, the above inequality implies 
   \begin{equation*}
\frac{\kappa^{\beta-1} ((\Gamma+2)\kappa+1)}{ p_{_\kappa}}\leqslant \frac{\Gamma+3}{\gamma}, \quad \kappa\geqslant 1.
\end{equation*}
This is,
\begin{equation}
\frac{(\Gamma+2)\kappa+1}{ p_{_\kappa}}\leqslant \frac{\Gamma+3}{\gamma \kappa^{\beta-1}}, \quad \kappa\geqslant 1.
\label{F_p_relation_ex}
\end{equation}
Since $\gamma\geqslant \Gamma+3$, the above equation implies that $p_{_\kappa}\geqslant (\Gamma+2)\kappa+1$. Hence, by Lemma \ref{prop_estimation_chi} with $p :=p_{_\kappa}$, we obtain that
  \begin{equation*}
        \|\mathcal{K}\chi_{\kappa} - \mathcal{K}_{p_{\kappa}} \chi_{\kappa}\|_\infty \leqslant \frac{44r\tau }{5 p_{_\kappa}}+\frac{27r \tau }{5} \left(\frac{(\Gamma+2)\kappa+1}{p_{_\kappa}}\right)^m. 
    \end{equation*}
Substituting $p_{_\kappa}\geqslant \gamma \kappa^\beta$ and inequality \eqref{F_p_relation_ex} into the two terms in the right hand side of above inequality respectively yields the desired result directly.
  \end{proof}
  
As for $y \in H^m_{\kappa, 0}(I)$ as shown in \eqref{solution}, we have the next proposition.

\begin{proposition}
\label{proposition_integral_y}
Suppose that the function $y$ has the form \eqref{solution} and $\Gamma \geqslant 0$ is a given relaxation factor. If the parameters $\beta$ and $\gamma$ are chosen to satisfy
$\beta \geqslant 1$,  $\gamma \geqslant \Gamma+3$
and $p_{_\kappa} := \lceil \gamma \kappa^\beta \rceil$, then for any $\kappa \geqslant 1$, 
\begin{equation}
\|\mathcal{K}y-\mathcal{K}_{p_{_\kappa}} y\|_\infty \leqslant   \frac{132\tau}{5\gamma \kappa^\beta}+\frac{81\tau(\Gamma+3)^m}{5\gamma^m \kappa^{m(\beta-1)}}. \label{integral_error}
\end{equation}
\end{proposition}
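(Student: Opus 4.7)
The plan is to recognize that the function $y$ displayed in \eqref{solution} is itself a member of the class of functions $\chi_\kappa$ defined in \eqref{def_chi}, and then simply invoke Proposition \ref{prop_choose_parameter} with the appropriate choice of parameters $r$ and $\alpha_j$. In other words, no new estimation work is required: the hard analytic content has already been packaged into Proposition \ref{prop_choose_parameter}, and the task here is merely a bookkeeping step that specializes that general bound to the structured form of the exact solution $y$.

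Concretely, I would first observe that the representation $y(s) = u_1(s) + u_2(s)e^{i\kappa s} + u_3(s)e^{-i\kappa s}$ matches \eqref{def_chi} with $r = 3$, $\alpha_1 = 0$, $\alpha_2 = 1$, $\alpha_3 = -1$, and $w_j = u_j$ for $j \in \mathbb{N}_3$. The constraint $|\alpha_j| \leq 1 + \Gamma$ holds trivially for any $\Gamma \geq 0$, and the derivative bound \eqref{requirement} on each $w_j$ is exactly the assumption made on the $u_j$ preceding \eqref{solution}. Hence $y$ qualifies as a $\chi_\kappa$ in Proposition \ref{prop_choose_parameter}.

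With the hypotheses $\beta \geq 1$, $\gamma \geq \Gamma+3$, and $p_\kappa := \lceil \gamma \kappa^\beta \rceil$ all carried over verbatim, Proposition \ref{prop_choose_parameter} applies with $r = 3$ and yields
\[
\|\mathcal{K}y - \mathcal{K}_{p_\kappa} y\|_\infty \leq \frac{44 \cdot 3\, \tau}{5\gamma \kappa^\beta} + \frac{27 \cdot 3\, \tau (\Gamma+3)^m}{5\gamma^m \kappa^{m(\beta-1)}},
\]
which simplifies precisely to the right-hand side of \eqref{integral_error}.

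There is no real obstacle here; if anything, the only small point of care is making sure one does not mistakenly require $\Gamma \geq 1$ or something similar when matching $|\alpha_j| \leq 1 + \Gamma$ against $\alpha_j \in \{-1, 0, 1\}$. Since $\Gamma \geq 0$ suffices, the application is immediate.
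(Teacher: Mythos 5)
Your proof is correct and coincides with the paper's: both observe that $y$ in \eqref{solution} is a member of the class \eqref{def_chi} with $r=3$, $w_j = u_j$, and $\alpha_j \in \{-1,0,1\}$, then apply Proposition \ref{prop_choose_parameter}. (The paper writes $\alpha_2 := -1$, $\alpha_3 := 1$ while you swap them; your assignment is the one that literally matches \eqref{solution} term by term, but since the final bound depends only on $|\alpha_j|$, the ordering is immaterial.)
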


\begin{proof}
Note that the function $y$ having the form \eqref{solution} can be expressed in the form $\chi_{\kappa}$ by letting $r := 3$, $\alpha_1 := 0$, $\alpha_2 := -1$, $\alpha_3 := 1,$ and $w_j := u_j$ for $j\in\mathbb{N}_3$. Then, the equation \eqref{integral_error} can be obtained by setting $\chi_{\kappa} = y$ in Proposition \ref{prop_choose_parameter}.
\end{proof}

For the purpose of approximating the operator $\mathcal{K}$, by  Proposition \ref{proposition_integral_y}, we propose the discrete oscillatory integral operator $\mathcal{K}_{p_{_\kappa}}$ by taking 
 \begin{equation}
     p_{_\kappa}:=\lceil \gamma \kappa^\beta\rceil, \label{def_p_kappa}
 \end{equation}
 where the parameters $\beta$ and $\gamma$ are chosen to satisfy
\begin{equation}
 \beta\geqslant 1, \ \ \gamma \geqslant \Gamma+3. \label{rule_beta_gamma}
\end{equation}  
 Up to now, with the the discrete oscillatory integral operator $\mathcal{K}_{p_{_\kappa}}$, the DNN learning model is well described.

In the next section, we will estimate the error of the solution obtained from the DNN model \eqref{DNN_opt}. To distinguish it from a MGDL model to be described in section 6, we will refer to the standard DNN model \eqref{DNN_opt} as the single-grade learning model.

\section{Analysis of Single-Grade Learning Model}
The purpose of this section is to bound the error of the DNN approximate solution by the training loss and the quadrature error.

Throughout this section, we assume that the solution $y$ has the form \eqref{solution}, the parameters $\beta$ and $\gamma$ are chosen according to rule \eqref{rule_beta_gamma}, and $p_{_\kappa}$ is defined as in equation \eqref{def_p_kappa}. The operator $\mathcal{K}_{p_{\kappa}}$ is defined by setting $p:=p_{_\kappa}$ as described in equation \eqref{def_mathcal_K_p}.


We first derive an equivalent form of the fully discrete minimization problem \eqref{DNN_opt}.
To this end, for each $\kappa\geqslant 1$ and a fixed $q\in \mathbb{N}$, we let
$N_\kappa:=qp_{_\kappa}+1$
and 
\begin{equation}
      \omega_\kappa:=e^{\frac{2\kappa}{qp_{_\kappa} }i}.\label{def_omega_kappa}
  \end{equation}
We then define the matrix 
\begin{equation}
    \mathbf{B}_{\kappa}:=(b_{ j,l}(\kappa))_{j,l\in \mathbb{N}_{N_\kappa}} \in \mathbb{C}^{N_\kappa\times N_\kappa}\label{def_B_kappa}
\end{equation}
with
$$
b_{j,l}(\kappa):=\begin{cases}
      \omega_\kappa^{|j-l|}, & l\in \{1, N_\kappa\},\\
      2\omega_\kappa^{|j-l|}, & l\in \{dq+1: d\in \mathbb{N}_{p_{_\kappa} -1}\},\\
      0, &\mathrm{otherwise},
  \end{cases}\ \ \mbox{for}\ \ j\in \mathbb{N}_{N_\kappa}.
$$
Since $p_{_\kappa}=\lceil \gamma\kappa^\beta \rceil$ where $\gamma, \beta$ is chosen by the rule \eqref{rule_beta_gamma}, the matrix $\mathbf{B}_{\kappa}$ is completely determined by four independent parameters $\kappa, \beta, \gamma, q$. However in this paper, we only concern the influence of wavenumber $\kappa$. Thus, notation $\mathbf{B}_\kappa$ indicates only its dependence on $\kappa$ even though it depends on the other three parameters $\beta, \gamma, q$, and so do $p_{_\kappa}$ and $N_\kappa$. 

We assume that the training data $\{(x_j, f(x_j)), j\in \mathbb{N}_{N_\kappa}\}$ are chosen as 
$x_j:=-1+\frac{2(j-1)}{N_\kappa-1}$, $j\in \mathbb{N}_{N_\kappa}$.
Thus, for each $h\in C(I)$,  the $ [((\mathcal{I}-\lambda\mathcal{K}_{p_{_\kappa}})h)(x_j): j=1,2, \dots, N_\kappa]^T$ has an equivalent matrix form. We show it in the next lemma.

\begin{lemma} \label{lem_space_difference}
For all $\kappa\geqslant 1$, $h\in C(I)$, there holds
\begin{equation}
    [((\mathcal{I}-\lambda\mathcal{K}_{p_{_\kappa}})h)(x_j): j=1,2, \dots, N_\kappa]^T=\left(\mathbf{I}_\kappa-\frac{\lambda }{p_{_\kappa}}\mathbf{B}_\kappa \right)\mathbf{v}_h,\label{lem_space_difference:target2-new}
\end{equation}
where $\mathbf{v}_h:=[h(x_j): j\in \mathbb{N}_{N_\kappa}]$ and $\mathbf{I}_\kappa$ denotes the $N_\kappa\times N_\kappa$  identity matrix.
\end{lemma}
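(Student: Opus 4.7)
The plan is a direct, structural computation: unpack the definition of $\mathcal{K}_{p_{_\kappa}}$ at each training point $x_j$, recognize that the quadrature nodes $s_i$ form a subset of the training grid $\{x_j\}$, and read off the entries of $\mathbf{B}_\kappa$ by matching coefficients. No estimates are involved; everything is an identity.

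First I would establish the indexing correspondence between the two grids. With $N_\kappa = qp_{_\kappa}+1$, the training points have uniform spacing $\frac{2}{qp_{_\kappa}}$, while the quadrature nodes $s_i=-1+ih$, $i\in\mathbb{Z}_{p_{_\kappa}+1}$, have spacing $h=\frac{2}{p_{_\kappa}}$, so $s_i = x_{iq+1}$. In particular $s_0 = x_1$, $s_{p_{_\kappa}}=x_{N_\kappa}$, and the interior nodes $s_i$ for $i\in\mathbb{N}_{p_{_\kappa}-1}$ are exactly the training points $x_{dq+1}$ for $d\in \mathbb{N}_{p_{_\kappa}-1}$. Next, from $x_j - s_i = \frac{2(j-1-iq)}{qp_{_\kappa}}$ one gets
\[
e^{i\kappa|x_j-s_i|} = \exp\!\left(\tfrac{2\kappa}{qp_{_\kappa}}\,i\,|j-1-iq|\right) = \omega_\kappa^{|j-(iq+1)|},
\]
by the definition \eqref{def_omega_kappa} of $\omega_\kappa$.

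Then I would substitute these identifications into the definition \eqref{def_mathcal_K_p} of $\mathcal{K}_{p_{_\kappa}}$ at $s=x_j$. Since $h/2 = 1/p_{_\kappa}$, the expression
\[
(\mathcal{K}_{p_{_\kappa}} h)(x_j) = \frac{1}{p_{_\kappa}}\!\left[h(x_1)\omega_\kappa^{|j-1|} + 2\sum_{d=1}^{p_{_\kappa}-1} h(x_{dq+1})\omega_\kappa^{|j-(dq+1)|} + h(x_{N_\kappa})\omega_\kappa^{|j-N_\kappa|}\right]
\]
becomes $\frac{1}{p_{_\kappa}}(\mathbf{B}_\kappa \mathbf{v}_h)_j$, since the bracketed coefficients of $h(x_\ell)$ coincide exactly with the entries $b_{j,\ell}(\kappa)$ prescribed in \eqref{def_B_kappa} (value $\omega_\kappa^{|j-\ell|}$ at the endpoints $\ell\in\{1,N_\kappa\}$, value $2\omega_\kappa^{|j-\ell|}$ at the interior quadrature indices $\ell\in\{dq+1:d\in\mathbb{N}_{p_{_\kappa}-1}\}$, and zero at all other $\ell$). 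Subtracting this from $h(x_j)=(\mathbf{v}_h)_j$ for each $j\in\mathbb{N}_{N_\kappa}$ yields the $j$-th entry of $\left(\mathbf{I}_\kappa - \tfrac{\lambda}{p_{_\kappa}}\mathbf{B}_\kappa\right)\mathbf{v}_h$, which is the claimed identity \eqref{lem_space_difference:target2-new}.

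The only potential obstacle is the bookkeeping: keeping the trapezoidal endpoint/interior weights $1,2,\dots,2,1$ aligned with the embedded index set $\{1,q+1,2q+1,\dots,N_\kappa\}$ inside $\{1,2,\dots,N_\kappa\}$, and verifying that all other columns of $\mathbf{B}_\kappa$ correctly vanish because no quadrature node sits at those training points. Once this correspondence is pinned down, the identity is immediate.
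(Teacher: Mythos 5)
Your proof is correct and follows essentially the same route as the paper: substitute the definition of $\mathcal{K}_{p_{_\kappa}}$ at each training point $x_j$, use the grid embedding $s_i = x_{iq+1}$ together with $e^{i\kappa|x_j - x_{iq+1}|}=\omega_\kappa^{|j-(iq+1)|}$, and match coefficients against the prescribed entries of $\mathbf{B}_\kappa$. The only difference is expository: you spell out the index correspondence $s_i = x_{iq+1}$ explicitly, whereas the paper simply writes $x_{ql+1}$ in place of $s_l$ without comment; your added step makes the bookkeeping (which you correctly flag as the only real work) more transparent.
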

\begin{proof}
For an $h\in C(I)$, by the definition of operator $\mathcal{I}-\lambda\mathcal{K}_{p_{_\kappa}}$, for each $j\in \mathbb{N}_{N_\kappa}$,  $((\mathcal{I}-\lambda\mathcal{K}_{p_{_\kappa}})h)(x_j)$ can be written as 
$$
H_j:= h(x_j)-\frac{\lambda}{p_{_\kappa}}\left(h(x_1)e^{i\kappa |x_1-x_j|}+2\sum_{l=1}^{p_{_\kappa}-1} h(x_{ql+1})e^{i\kappa |x_j-x_{ql+1}|}+h(x_{N_\kappa}) e^{i\kappa|x_j-x_{N_\kappa}|}\right).
$$
Using the definition \eqref{def_omega_kappa} of $\omega_\kappa$, we may further rewrite $H_j$ as 
\begin{align*}
H_j = h(x_j)-\frac{\lambda}{p_{_\kappa}}\left(\omega_\kappa^{|1-j|}h(x_1)+2\sum_{l=1}^{p_{_\kappa}-1} \omega_\kappa^{|j-ql-1|}h(x_{ql+1})+ \omega_\kappa^{|j-N_\kappa|}h(x_{N_\kappa})\right),\ \ j\in \mathbb{N}_{N_\kappa}. 
\end{align*}
It can be verified that the right-hand side of the above equation is the $j$-th component of the vector $(\mathbf{I}_\kappa-\frac{\lambda}{p_{_\kappa}}\mathbf{B}_\kappa)\mathbf{v}_h$. This proves the desired result. 
\end{proof}

Lemma \ref{lem_space_difference} relates the operator $\mathcal{I}-\lambda \mathcal{K}_{p_{_\kappa}}$ with the matrix 
\begin{equation}
    \mathbf{M}_\kappa:= \mathbf{I}_\kappa-\lambda\mathbf{B}_\kappa/p_{_\kappa}\in \mathbb{C}^{N_\kappa\times N_\kappa}. \label{def_M_kappa}
\end{equation}
In equation \eqref{lem_space_difference:target2-new}
by choosing $h:=\mathcal{T}\mathcal{N}_n(\{\mathbf{W}_l, \mathbf{b}_l\}_{l=1}^n;\cdot)$, we obtain that
$$
[((\mathcal{I}-\lambda\mathcal{K}_{p_{_\kappa}})\mathcal{T}\mathcal{N}_n(\{\mathbf{W}_l, \mathbf{b}_l\}_{l=1}^n;\cdot))(x_j): j=1,2, \dots, N_\kappa]^T=\mathbf{M}_\kappa \mathbf{v}_g (\{\mathbf{W}_l, \mathbf{b}_l\}_{l=1}^n),
$$
where
$$
\mathbf{v}_g(\{\mathbf{W}_l, \mathbf{b}_l\}_{l=1}^n):=[\mathcal{T}\mathcal{N}_n(\{\mathbf{W}_l, \mathbf{b}_l\}_{l=1}^n;x_l), l\in \mathbb{N}_{N_\kappa}]\in \mathbb{C}^{N_\kappa}.
$$
Moreover, by definition \eqref{single_grade_error_function} of $\tilde{e}$ and by letting 
$\mathbf{v}_f:=[f(x_l): l\in \mathbb{N}_{N_\kappa}]\in \mathbb{C}^{N_\kappa}$, it holds that
\begin{equation*}
    [\tilde{e}(x_j): j=1,2, \dots, N_\kappa]^T=\mathbf{v}_f-\mathbf{M}_\kappa \mathbf{v}_g (\{\mathbf{W}_l, \mathbf{b}_l\}_{l=1}^n).   
\end{equation*}
Thus, the minimization problem \eqref{DNN_opt} is equivalent to the following minimization problem
\begin{equation}
    \arg\min_{\{\mathbf{W}_l, \mathbf{b}_l\}_{l=1}^n} \frac{1}{N_\kappa}\left\|\mathbf{M}_\kappa \mathbf{v}_g (\{\mathbf{W}_l, \mathbf{b}_l\}_{l=1}^n)-\mathbf{v}_f\right\|^2_{l^2}. \label{opt_least_square}
\end{equation}
The optimization problem \eqref{opt_least_square} may be solved by the Adam (Adaptive Moment Estimation) optimization algorithm \cite{kingma2014adam} with the initialization proposed in \cite{he2015delving}. Upon finding the parameters $\{\tilde{\mathbf{W}}^*_j, \tilde{\mathbf{b}}^*_j\}_{j=1}^n$, the numerical solution $\tilde{y}^*$ of equation \eqref{fredholm_equation_operator} and the related error $\tilde{e}^*$ are given by \eqref{def_fin_y_star} and \eqref{def_fin_e}, respectively.

In the rest of this section, we will estimate the error $\|y-\tilde{y}^*\|_{N_\kappa}$ in terms of the loss error $\|\tilde{e}^*\|_{N_\kappa}$ and the quadrature error $\|(\mathcal{K}-\mathcal{K}_{p_{_\kappa}})y\|_\infty$.
We first estimate the error $\|(\mathcal{I}-\lambda \mathcal{K}_{p_{_\kappa}})(y-\tilde{y}^*)\|_{N_\kappa}$.

\begin{lemma} \label{lem_err_before_p}
For each $\kappa\geqslant 1$, there holds that
\begin{equation}
    \|(\mathcal{I}-\lambda \mathcal{K}_{p_{_\kappa}})(y-\tilde{y}^*)\|_{N_\kappa} \leqslant \|\tilde{e}^*\|_{N_\kappa}+|\lambda|\|(\mathcal{K}-\mathcal{K}_{p_{_\kappa}})y\|_\infty.\label{lem_err_before_p:target}
\end{equation}
\end{lemma}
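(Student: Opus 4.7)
The plan is to reduce the left-hand side to a sum of two terms by exploiting the exact equation satisfied by $y$ and the definition of the residual $\tilde{e}^*$, then apply the triangle inequality together with the elementary bound $\|\cdot\|_{N_\kappa}\leqslant\|\cdot\|_\infty$.

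First I would split off the role of the exact solution. Since $y$ satisfies $(\mathcal{I}-\lambda\mathcal{K})y=f$, adding and subtracting $\lambda\mathcal{K}_{p_{_\kappa}}y$ gives the identity
\begin{equation*}
(\mathcal{I}-\lambda\mathcal{K}_{p_{_\kappa}})y = f + \lambda(\mathcal{K}-\mathcal{K}_{p_{_\kappa}})y.
\end{equation*}
On the DNN side, the definitions \eqref{single_grade_error_function} and \eqref{def_fin_e} give
\begin{equation*}
(\mathcal{I}-\lambda\mathcal{K}_{p_{_\kappa}})\tilde{y}^{*} = f - \tilde{e}^{*}.
\end{equation*}

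Next I would subtract these two displays to obtain
\begin{equation*}
(\mathcal{I}-\lambda\mathcal{K}_{p_{_\kappa}})(y-\tilde{y}^{*}) = \tilde{e}^{*} + \lambda(\mathcal{K}-\mathcal{K}_{p_{_\kappa}})y,
\end{equation*}
so that the left-hand side of \eqref{lem_err_before_p:target} splits cleanly into the training residual and the quadrature error. Applying the triangle inequality in the seminorm $\|\cdot\|_{N_\kappa}$ yields
\begin{equation*}
\|(\mathcal{I}-\lambda\mathcal{K}_{p_{_\kappa}})(y-\tilde{y}^{*})\|_{N_\kappa} \leqslant \|\tilde{e}^{*}\|_{N_\kappa} + |\lambda|\,\|(\mathcal{K}-\mathcal{K}_{p_{_\kappa}})y\|_{N_\kappa}.
\end{equation*}

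Finally, I would pass from the discrete seminorm to the uniform norm on the quadrature error. The definition of $\|\cdot\|_{N_\kappa}$ gives $\|\phi\|_{N_\kappa}\leqslant \|\phi\|_\infty$ for every $\phi\in C(I)$, by simply bounding each sample value by the supremum. Applying this to $\phi = (\mathcal{K}-\mathcal{K}_{p_{_\kappa}})y$ produces the stated inequality. There is no serious obstacle here: the content is purely algebraic manipulation of the two defining equations plus a one-line comparison of norms; the only point requiring a little care is to make sure the sign conventions in \eqref{single_grade_error_function} and the exact equation are tracked correctly so that the two $f$'s cancel.
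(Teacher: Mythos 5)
Your proof is correct and follows essentially the same route as the paper's: both establish the key identity $(\mathcal{I}-\lambda\mathcal{K}_{p_{_\kappa}})(y-\tilde{y}^{*}) = \tilde{e}^{*} + \lambda(\mathcal{K}-\mathcal{K}_{p_{_\kappa}})y$ by combining $f=(\mathcal{I}-\lambda\mathcal{K})y$ with the definition of $\tilde{e}^*$, then apply the triangle inequality and the bound $\|\phi\|_{N_\kappa}\leqslant\|\phi\|_\infty$.
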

\begin{proof}
By the definition \eqref{def_fin_e}  of $\tilde{e}^*$, we have that
$\tilde{e}^*= f- (\mathcal{I}-\lambda\mathcal{K}_{p_{_\kappa}})\tilde{y}^*$, which
together with $f=(\mathcal{I}-\lambda\mathcal{K})y$ yields that
$$
\tilde{e}^*= (\mathcal{I}-\lambda\mathcal{K})y-(\mathcal{I}-\lambda\mathcal{K}_{p_{_\kappa}})\tilde{y}^*.
$$
Thus, we obtain that
\begin{align*}
    (\mathcal{I}-\lambda\mathcal{K}_{p_{_\kappa}})(y-\tilde{y}^*)&=(\mathcal{I}-\lambda\mathcal{K})y-(\mathcal{I}-\lambda\mathcal{K}_{p_{_\kappa}})\tilde{y}^*+\lambda(\mathcal{K}-\mathcal{K}_{p_{_\kappa}})y\\
    &=\tilde{e}^* +\lambda(\mathcal{K}-\mathcal{K}_{p_{_\kappa}})y.
\end{align*}
By the triangle inequality of the semi-norm $\|\cdot\|_{N_\kappa}$, we find that
\begin{equation}
    \|(\mathcal{I}-\lambda\mathcal{K}_{p_{_\kappa}})(y-\tilde{y}^*)\|_{N_\kappa} \leqslant  \|\tilde{e}^*\|_{N_\kappa}+|\lambda| \|(\mathcal{K}-\mathcal{K}_{p_{_\kappa}})y\|_{N_\kappa}. \label{lem_err_before_p:equ1}
\end{equation}
The definition of $\|\cdot\|_{N_\kappa}$ yields that
$$
\|(\mathcal{K}-\mathcal{K}_{p_{_\kappa}})y\|_{N_\kappa}=\sqrt{\frac{1}{{N_\kappa}}\sum_{j=1}^{N_\kappa} |(\mathcal{K}y-\mathcal{K}_{p_{_\kappa}}y)(x_j)|^2}\leqslant  \|(\mathcal{K}-\mathcal{K}_{p_{_\kappa}})y\|_\infty.
$$
Substituting this inequality into the right-hand side of \eqref{lem_err_before_p:equ1}, we obtain the desired result \eqref{lem_err_before_p:target}.
\end{proof}

Next, we provide a condition that ensures the invertibility of the matrix $\mathbf{M}_\kappa$.

\begin{lemma} \label{prop_C_kappa}
If there exists a constant $C_\kappa>0$ such that for any $h\in C(I)$, 
\begin{equation}
    \|h\|_{N_\kappa}\leqslant C_\kappa \|(\mathcal{I}-\lambda\mathcal{K}_{p_{_\kappa}})h\|_{N_\kappa}, \label{prop_C_kappa:target}
\end{equation}
then the matrix $\mathbf{M}_\kappa$ is invertible.
\end{lemma}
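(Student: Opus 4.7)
The plan is to show that the square matrix $\mathbf{M}_\kappa$ has trivial null space, which for a square matrix is equivalent to invertibility. So I would fix an arbitrary vector $\mathbf{v}\in \mathbb{C}^{N_\kappa}$ satisfying $\mathbf{M}_\kappa \mathbf{v}=\mathbf{0}$ and aim to conclude $\mathbf{v}=\mathbf{0}$.

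The bridge between the matrix-level hypothesis and the functional hypothesis \eqref{prop_C_kappa:target} is Lemma \ref{lem_space_difference} together with the identity \eqref{relation_N_2}. First, I would interpolate $\mathbf{v}$ by a continuous function: choose any $h\in C(I)$ (for instance, the piecewise linear interpolant through the nodes $x_j$) satisfying $h(x_j)=v_j$ for every $j\in \mathbb{N}_{N_\kappa}$, so that $\mathbf{v}_h=\mathbf{v}$. Applying Lemma \ref{lem_space_difference} to this $h$ gives
\begin{equation*}
[((\mathcal{I}-\lambda\mathcal{K}_{p_{_\kappa}})h)(x_j): j\in \mathbb{N}_{N_\kappa}]^T=\mathbf{M}_\kappa \mathbf{v}=\mathbf{0}.
\end{equation*}
By \eqref{relation_N_2}, this forces $\|(\mathcal{I}-\lambda \mathcal{K}_{p_{_\kappa}})h\|_{N_\kappa}=0$. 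Plugging into the hypothesis \eqref{prop_C_kappa:target} then yields $\|h\|_{N_\kappa}\leqslant C_\kappa\cdot 0 = 0$, and using \eqref{relation_N_2} again gives $\|\mathbf{v}\|_{\ell_2}=\sqrt{N_\kappa}\,\|h\|_{N_\kappa}=0$, so $\mathbf{v}=\mathbf{0}$. Since $\mathbf{M}_\kappa$ is square ($N_\kappa\times N_\kappa$) with trivial kernel, it is invertible.

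I do not expect any real obstacle. The only point requiring mild care is that the hypothesis is stated functionally on $C(I)$, whereas the conclusion is about a finite matrix; but the semi-norm $\|\cdot\|_{N_\kappa}$ sees only the sampled values, so one just needs some continuous interpolant realizing an arbitrary sample vector, which exists trivially (e.g. piecewise linear). Everything else is a direct invocation of Lemma \ref{lem_space_difference} and the scaling identity \eqref{relation_N_2}.
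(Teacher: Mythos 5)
Your proposal is essentially the paper's own proof: both interpolate the sample vector by a continuous function (the paper uses a Lagrange polynomial, you use a piecewise linear interpolant — an immaterial choice), then pass between the functional semi-norm $\|\cdot\|_{N_\kappa}$ and the $\ell_2$ norm via \eqref{relation_N_2} and Lemma \ref{lem_space_difference} to conclude the kernel of $\mathbf{M}_\kappa$ is trivial. The only cosmetic difference is that the paper phrases the argument as a proof by contradiction while you argue directly that $\mathbf{M}_\kappa\mathbf{v}=\mathbf{0}$ forces $\mathbf{v}=\mathbf{0}$; these are logically identical.
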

\begin{proof}
We establish the invertibility of the matrix $\mathbf{M}_\kappa$ by contradiction. Assume, to the contrary, that $\mathbf{M}_\kappa$ is not invertible. Then, there exists a nonzero vector $\mathbf{v} := [v_j, j \in \mathbb{N}_\kappa]^T \in \mathbb{C}^{N_\kappa}$ such that $\mathbf{M}_\kappa \mathbf{v} = \mathbf{0}$. By the Lagrange interpolation formula, we can construct a polynomial $h_\mathbf{v} \in C(I)$ such that $h_{\mathbf{v}}(x_j) = v_j$, $j \in \mathbb{N}_{N_\kappa}$. By letting $h := h_\mathbf{v}$ in inequality \eqref{prop_C_kappa:target}, we obtain that
\begin{equation}
    \|h_{\mathbf{v}}\|_{N_{\kappa}} \leqslant C_\kappa\|(\mathcal{I} - \lambda\mathcal{K}_{p_{_\kappa}})h_\mathbf{v}\|_{N_\kappa}.  \label{prop_C_kappa:equ1}
\end{equation}
Noting 
\begin{equation*}
    \|h_{\mathbf{v}}\|_{N_{\kappa}} = \|\mathbf{v}\|_{l_2}/\sqrt{N_\kappa},\ \ \mbox{and}\ \  \|(\mathcal{I}-\lambda\mathcal{K}_{p_{_\kappa}})h_\mathbf{v}\|_{N_\kappa}=\left.\left\|\mathbf{M}_\kappa \mathbf{v}\right\|_{l_2}\right/\sqrt{N_\kappa}
\end{equation*}
by relations \eqref{relation_N_2} and \eqref{def_M_kappa} respectively, the inequality \eqref{prop_C_kappa:equ1} implies that
\begin{equation} 
    \|\mathbf{v}\|_{l_2} \leqslant C_\kappa \left\|\mathbf{M}_\kappa \mathbf{v}\right\|_{l_2}. \label{prop_C_kappa:equ2}
\end{equation}
However, note that the vector $\mathbf{v}$ is nonzero and $\mathbf{M}_\kappa \mathbf{v} = \mathbf{0}$, which contradicts inequality \eqref{prop_C_kappa:equ2}. This concludes that the matrix $\mathbf{M}_\kappa$ is invertible.
\end{proof}

Our next step is to identify the range of $\kappa$ that ensures the invertibility of matrix $\mathbf{M}_\kappa$. Note that matrix $\mathbf{M}_\kappa$ is related to matrix $\mathbf{B}_\kappa$ by relation \eqref{def_M_kappa} and many columns of matrix $\mathbf{B}_\kappa$ as defined in equation \eqref{def_B_kappa} are zero. We denote the set of the indices of the zero columns of $\mathbf{B}_\kappa$ by 
$J := \mathbb{N}_{N_\kappa} \setminus \{ql+1 : l \in \mathbb{Z}_{p_{\kappa+1}}\}$. 
Hence, by relation \eqref{def_M_kappa}, for any $j\in J$, the entries in the $j$-th column of matrix $\mathbf{M}_\kappa$ are all zeros but the $j$-th entry, which is $1$.  Combining this observation with the fact that a square matrix is invertible if and only if its determinant is non-zero, we will expand the determinant of matrix $\mathbf{M}_\kappa$ corresponding to the indices in set $J$,  
thereby obtaining the range of $\kappa$. 

Specifically, assuming that $j_1$ is the largest index in set $J$, we expand the determinant $\mathrm{det}(\mathbf{M}_\kappa)$ by its $j_1$-th column and obtain that
$
\mathrm{det}(\mathbf{M}_\kappa) = \mathrm{det}(\mathbf{M}_{\kappa,1}),
$
where the matrix $\mathbf{M}_{\kappa, 1} \in \mathbb{C}^{(N_\kappa-1) \times (N_\kappa-1)}$ is the submatrix of $\mathbf{M}_\kappa$ obtained by removing its $j_1$-th row and $j_1$-th column. We denote by $J_1$ the subset of $J$ by removing $j_1$ from it, that is, $J_1:=J\setminus\{j_1\}$, and observe that all entries in the $j$-th column of matrix $\mathbf{M}_{\kappa, 1}$ are zeros, but one, the $j$-th entry, which is $1$. We denote by $\mathbf{M}_{\kappa, 2}$ the submatrix of  $\mathbf{M}_{\kappa, 1}$ obtained by removing its $j_2$-th row and $j_2$-th column, where $j_2$ denotes the largest index in set $J_2$. Clearly, we have that $\mathrm{det}(\mathbf{M}_{\kappa, 2}) = \mathrm{det}(\mathbf{M}_{\kappa, 1}).$
Since $\#J=N_\kappa-p_\kappa-1$, we can repeat this process $N_\kappa-p_\kappa-1$ times until obtaining $\mathbf{M}_{\kappa, N_\kappa-p_{_\kappa}-1} \in \mathbb{C}^{(p_{_\kappa}+1) \times (p_{_\kappa}+1)}$, 
yielding
\begin{equation}\label{submatrix}
    \mathrm{det}(\mathbf{M}_\kappa)=\mathrm{det}(\mathbf{M}_{\kappa, j}), \ \ j=1, 2, \dots, N_\kappa-p_{_\kappa}-1.
\end{equation}
Noting that matrix $\mathbf{M}_{\kappa, N_\kappa-p_{_\kappa}-1} \in \mathbb{C}^{(p_{_\kappa}+1) \times (p_{_\kappa}+1)}$ is obtained by removing all the rows and columns of the indices in the set $J$,  we have that 
\begin{equation*}(\mathbf{M}_{\kappa, N_\kappa-p_{_\kappa}-1})_{(j,l)}=
\begin{cases}
1-\lambda/p_{_\kappa}, &j=l\in \{1, p_{_\kappa}+1\},\\
-\lambda \omega_\kappa^{q|j-l|}/p_{_\kappa}, &l\in \{1, p_{_\kappa}+1\},\quad j\in \mathbb{N}_{p_{_\kappa}+1}\setminus \{l\},\\
1-2\lambda/p_{_\kappa}, &j=l\in \{2,3,\dots, p_{_\kappa}\},\\
-2\lambda \omega_\kappa^{q|j-l|}/p_{_\kappa}, &l\in \{2,3,\dots, p_{_\kappa}\},\quad j\in \mathbb{N}_{p_{_\kappa}+1}\setminus \{l\}.
\end{cases}  
\end{equation*}

According to equation \eqref{submatrix}, it suffices to consider the range of $\kappa$ for det$(\mathbf{M}_{\kappa, N_\kappa-p_{_\kappa}-1}) \neq 0$. To this end, we perform elementary  transformations on matrix $\mathbf{M}_{\kappa, N_\kappa-p_{_\kappa}-1}$ to simplify it. Precisely, we multiply the first and $(p_{_\kappa}+1)$-th columns of matrix $\mathbf{M}_{\kappa, N_\kappa-p_{_\kappa}-1}$ by $-p_{_\kappa}$ and the second column to the $p_{_\kappa}$-th column by $-p_{_\kappa}/2$ to obtain a new matrix. To describe the resultant matrix, for $d\in \mathbb{N}$ with $d\geqslant 4$, we denote by $\mathbb{C}[x]^{d\times d}$ the set of $d\times d$ matrices whose entries are polynomials of the variable $x\in \mathbb{C}$, and for $\lambda\in \mathbb{C}$, we define the matrix
$$
\mathbf{A}_d(x;\lambda):=(a_{j,k}(x;\lambda))_{j,k\in \mathbb{N}_d}\in \mathbb{C}[x]^{d\times d},
$$
where
%
$$
a_{j,l}(x;\lambda):=\begin{cases}
\lambda- (d-1), &j=l\in \{1, d\}, \\
\lambda-\frac{d-1}{2}, & j=l\in\{2,3,\dots, d-1\},\\
\lambda x^{|j-l|}, & \mathrm{otherwise.}
\end{cases}
$$
It can be verified that the matrix that results from matrix $\mathbf{M}_{\kappa, N_\kappa-p_{_\kappa}-1}$ by the elementary transformations described above is $\mathbf{A}_{p_{_\kappa}+1}(\omega_\kappa^q; \lambda)$. Noticing that $\omega_\kappa^q = e^{\frac{2\kappa}{p_{_\kappa}}i}$ by the definition \eqref{def_omega_kappa} of $\omega_\kappa$, we conclude that the matrix $\mathbf{M}_\kappa$ is invertible if and only if 
$
\mathrm{det}\left(\mathbf{A}_{p_{_\kappa}+1}\left(e^{\frac{2\kappa}{p_{_\kappa}}i};\lambda\right)\right) \neq 0. 
$
We now introduce the set
\begin{equation}
    S(\lambda):=\left\{\kappa \geqslant 1:  {\rm det}\left(\mathbf{A}_{p_{_\kappa}+1}\left(e^{\frac{2 \kappa}{p_{_\kappa}}i}; \lambda\right)\right)\neq 0\right\} \label{def_s}
\end{equation}
and summarize the discussion above in the following lemma. 

\begin{lemma} \label{lemma_invertible}
Matrix $\mathbf{M}_\kappa\in \mathbb{C}^{N_\kappa\times N_\kappa}$ is invertible if and only if $\kappa\in S(\lambda)$. 
\end{lemma}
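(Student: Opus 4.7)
The plan is to formalize the determinantal reduction that has been carried out informally in the paragraphs preceding the statement. The key observation is that invertibility is equivalent to the non-vanishing of the determinant, so it suffices to track $\det(\mathbf{M}_\kappa)$ through a sequence of steps that preserve non-vanishing.

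First I would make the cofactor reduction precise. Set $J := \mathbb{N}_{N_\kappa}\setminus\{ql+1:l\in\mathbb{Z}_{p_{_\kappa}+1}\}$, as in the discussion above, and note that by the definition \eqref{def_B_kappa} of $\mathbf{B}_\kappa$ together with the relation \eqref{def_M_kappa}, for each $j\in J$ the $j$-th column of $\mathbf{M}_\kappa$ has a single nonzero entry equal to $1$ at position $(j,j)$. Expanding $\det(\mathbf{M}_\kappa)$ along the column with the largest index in $J$ removes that row and column while preserving the value of the determinant, and the resulting $(N_\kappa-1)\times(N_\kappa-1)$ submatrix still has the same zero-column structure for the indices in $J_1:=J\setminus\{j_1\}$. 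Iterating this expansion $\#J=N_\kappa-p_{_\kappa}-1$ times yields
\begin{equation*}
\det(\mathbf{M}_\kappa)=\det\bigl(\mathbf{M}_{\kappa,\,N_\kappa-p_{_\kappa}-1}\bigr),
\end{equation*}
where the right-hand matrix lies in $\mathbb{C}^{(p_{_\kappa}+1)\times(p_{_\kappa}+1)}$ and has the explicit entries written out in the excerpt.

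Next I would apply the prescribed column scalings. Multiplying the first and $(p_{_\kappa}+1)$-th columns of $\mathbf{M}_{\kappa,\,N_\kappa-p_{_\kappa}-1}$ by $-p_{_\kappa}$, and each of the middle $p_{_\kappa}-1$ columns by $-p_{_\kappa}/2$, produces a matrix whose entries coincide with those of $\mathbf{A}_{p_{_\kappa}+1}\!\bigl(\omega_\kappa^{q};\lambda\bigr)$ after a direct entrywise check against the definition of $a_{j,l}(x;\lambda)$. Since each scaling multiplies the determinant by a fixed nonzero constant, we obtain a factorization
\begin{equation*}
\det\bigl(\mathbf{M}_{\kappa,\,N_\kappa-p_{_\kappa}-1}\bigr)=c_{p_{_\kappa}}\,\det\bigl(\mathbf{A}_{p_{_\kappa}+1}(\omega_\kappa^{q};\lambda)\bigr),
\end{equation*}
with $c_{p_{_\kappa}}:=(-p_{_\kappa})^{-2}(-p_{_\kappa}/2)^{-(p_{_\kappa}-1)}\neq 0$. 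Using $\omega_\kappa^{q}=e^{2\kappa i/p_{_\kappa}}$ from \eqref{def_omega_kappa}, I conclude that $\det(\mathbf{M}_\kappa)\neq 0$ if and only if $\det\bigl(\mathbf{A}_{p_{_\kappa}+1}(e^{2\kappa i/p_{_\kappa}};\lambda)\bigr)\neq 0$, which by the definition \eqref{def_s} of $S(\lambda)$ is exactly the condition $\kappa\in S(\lambda)$. Since a square matrix is invertible iff its determinant is nonzero, the equivalence stated in the lemma follows.

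The routine parts are the bookkeeping for the iterated cofactor expansion and verifying the entrywise match after the column scalings; the only mild obstacle is being careful that the order of removals in the cofactor step does not introduce sign changes, which is automatic here because each expansion is along a column whose unique nonzero entry sits on the diagonal of the current submatrix and equals $+1$. No result beyond the constructions already supplied in the excerpt is needed.
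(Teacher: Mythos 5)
Your proposal reproduces the paper's own argument: identify the zero-column index set $J$, iteratively expand $\det(\mathbf{M}_\kappa)$ along those columns (correctly observing that the unique nonzero entry is a $+1$ on the diagonal, so no sign adjustments arise), pass to the $(p_{_\kappa}+1)\times(p_{_\kappa}+1)$ principal submatrix, apply the column scalings to reach $\mathbf{A}_{p_{_\kappa}+1}(\omega_\kappa^q;\lambda)$, and invoke $\omega_\kappa^q=e^{2\kappa i/p_{_\kappa}}$ together with the definition of $S(\lambda)$. The explicit nonzero factor $c_{p_{_\kappa}}$ you write down is a clean formalization of the paper's implicit use that column scalings preserve non-vanishing of the determinant; otherwise the route is identical.
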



Lemma \ref{lemma_invertible} indicates that matrix $\mathbf{M}_\kappa^{-1}$ exists if and only if $\kappa\in S(\lambda)$. Hence, the norm $\|\mathbf{M}_\kappa^{-1}\|_2$ is defined only for $\kappa\in S(\lambda)$. Is this too restricted? The next proposition reveals that the complementary set $[1,+\infty)\setminus S(\lambda)$ is at most countable. This in turn implies the set $S(\lambda)$ is equal to $[1, +\infty)$ almost everywhere, and thus, it is not too restricted.

\begin{proposition} \label{prop_most_countable}
  For any $\lambda\in \mathbb{C}$, the set $[1,+\infty)\setminus S(\lambda)$ is at most countable.
\end{proposition}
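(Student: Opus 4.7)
The plan is to partition $[1,+\infty)$ according to the integer value of $p_{_\kappa}$ and show that in each piece only finitely many $\kappa$ can lie outside $S(\lambda)$. Since $p_{_\kappa}=\lceil \gamma\kappa^\beta \rceil$ is integer valued, for each integer $N\geqslant 3$ (note $p_{_\kappa}\geqslant \lceil \gamma\rceil\geqslant 3$ from rule \eqref{rule_beta_gamma}) the set $E_N:=\{\kappa\geqslant 1: p_{_\kappa}=N\}$ is a bounded interval, possibly empty, and these form a countable partition of $[1,+\infty)$. On $E_N$ the matrix appearing in definition \eqref{def_s} equals $\mathbf{A}_{N+1}(e^{2\kappa i/N};\lambda)$, so the condition $\kappa\notin S(\lambda)$ becomes $P_N(e^{2\kappa i/N})=0$, where $P_N(x):=\det(\mathbf{A}_{N+1}(x;\lambda))$. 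Since every entry of $\mathbf{A}_{N+1}(x;\lambda)$ is a polynomial in $x$ for fixed $\lambda$, $P_N$ is itself a polynomial in $x$.

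The central step, and the one that requires care, is to prove $P_N\not\equiv 0$ for every admissible $N$. I would do this by evaluating at two strategically chosen points. At $x=0$ all off-diagonal entries $\lambda x^{|j-l|}$ vanish, so $P_N(0)$ reduces to the product of the diagonal entries, which equals $(\lambda-N)^2(\lambda-N/2)^{N-1}$. This is nonzero except when $\lambda\in\{N,N/2\}$, and for fixed $\lambda$ this excludes at most two values of $N$. For those potentially exceptional $N$, I would instead evaluate at $x=1$: the matrix becomes $\lambda\mathbf{J}_{N+1}+\tilde D$, where $\mathbf{J}_{N+1}$ is the all-ones matrix and $\tilde D=\mathrm{diag}(-N,-N/2,\ldots,-N/2,-N)$; writing $\lambda\mathbf{J}_{N+1}=(\lambda\mathbf{1})\mathbf{1}^T$ and applying the matrix determinant lemma yields $P_N(1)=\det(\tilde D)(1+\lambda\,\mathbf{1}^T\tilde D^{-1}\mathbf{1})=N^2(-N/2)^{N-1}(1-2\lambda)$, which is nonzero whenever $\lambda\neq 1/2$. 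In the remaining case $\lambda=1/2$ the $x=0$ evaluation already suffices, because $N\geqslant 3$ forces $N\neq 1/2$ and $N\neq 1$. Thus $P_N$ is a nonzero polynomial in every case.

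The rest is routine. A nonzero polynomial $P_N$ has only finitely many complex roots; for each root $z$ with $|z|=1$ the equation $e^{2\kappa i/N}=z$ determines an arithmetic progression in $\kappa$, and intersecting with the bounded interval $E_N$ leaves only finitely many bad $\kappa$, while roots with $|z|\neq 1$ contribute nothing. Therefore $E_N\setminus S(\lambda)$ is finite, and
$$
[1,+\infty)\setminus S(\lambda)=\bigcup_{N}\bigl(E_N\setminus S(\lambda)\bigr)
$$
is a countable union of finite sets, hence at most countable. The main obstacle, as flagged above, is the non-vanishing of $P_N$: no single evaluation point works uniformly in $\lambda$, and one must combine the two evaluations to cover every case simultaneously.
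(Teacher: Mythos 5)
Your proposal follows the same decomposition of $[1,+\infty)$ by the integer value of $p_{_\kappa}$ as the paper, reduces in the same way to showing that the polynomial $P_N(x)=\det(\mathbf{A}_{N+1}(x;\lambda))$ has finitely many roots on the unit circle, and invokes the same bounded-interval/periodicity argument at the end, so the skeleton of the argument is identical.

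The one genuine difference is in the ``central step'' you flag yourself. The paper's proof disposes of \eqref{porp_set_to_prove1} in a single sentence, asserting that the cardinality of $S_{p+1}(\lambda)$ ``corresponds to the number of zeros of the polynomial $\det(\mathbf{A}_{p+1}(x;\lambda))$ \dots\ and this is certainly a finite number,'' which implicitly assumes, but does not check, that this polynomial is not identically zero. You supply exactly the missing verification: the evaluation $P_N(0)=(\lambda-N)^2(\lambda-N/2)^{N-1}$ (correct, since only the diagonal survives at $x=0$) handles every $\lambda\notin\{N,N/2\}$, and the matrix determinant lemma computation $P_N(1)=N^2(-N/2)^{N-1}(1-2\lambda)$ (also correct: $\mathbf{1}^T\tilde D^{-1}\mathbf{1}=-2$) handles the remaining $\lambda$ except $\lambda=1/2$, for which $N\geqslant 3$ already rules out $\lambda\in\{N,N/2\}$. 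Together these cover all $\lambda\in\mathbb{C}$ and all admissible $N$, so $P_N\not\equiv 0$ in every case. In short: same route, but your proof is more complete, patching a small but real lacuna in the published argument.
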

\begin{proof}
We prove this proposition by decomposing the set $[1,+\infty)\setminus S(\lambda)$. By the definition \eqref{def_s}  of $S(\lambda)$, we know that
\begin{equation}
    [1,+\infty)\setminus S(\lambda)=\left\{\kappa \geqslant 1:  {\rm det}\left(\mathbf{A}_{p_{_\kappa}+1}\left(e^{\frac{2 \kappa}{p_{_\kappa}}i}; \lambda\right)\right)= 0\right\}. \label{def_s_inv}
\end{equation}
Equation \eqref{def_s_inv} reveals that set $[1,+\infty)\setminus S(\lambda)$ is determined by the zeros of polynomial ${\rm det}(\mathbf{A}_{p_{_\kappa}+1}\left(x; \lambda\right))$, $x\in \mathbb{C}$. We decompose set $[1,+\infty)\setminus S(\lambda)$ according to the order, related to $p_{\kappa}+1$, of the polynomial. Since $p_{_\kappa}+1\geqslant4$ by the definition \eqref{def_p_kappa} of $p_{_\kappa}$, for $d\geqslant 4$, we define
\begin{equation}
    S_d(\lambda):= \{x\in \mathbb{C}: \mathrm{det}(\mathbf{A}_{d}(x;\lambda))=0\}.
    \label{def_s_d}
\end{equation}
Thus, we may decompose  set $[1,+\infty)\setminus S(\lambda)$ as
\begin{equation}
    [1,+\infty)\setminus S(\lambda)=\bigcup_{p=3}^{\infty}\left\{\kappa\geqslant 1: e^{\frac{2\kappa}{p}i}\in S_{p+1}(\lambda), p=p_{_\kappa}\right\}.\label{trans_another_S_lambda}
\end{equation}
Meanwhile, for any $p\in\{3,4,\dots\}$, with a direct computation,  $\kappa\geqslant 1$ satisfies $p=p_{_\kappa}=\lceil \gamma \kappa^\beta\rceil$ if and only if
$$
\kappa\in I_p:= \left(\left((p-1)/\gamma\right)^{1/\beta}, \left({p}/{\gamma}\right)^{1/\beta}\right] \cap[1,+\infty).
$$
Hence, equation \eqref{trans_another_S_lambda} may be written as
\begin{equation*}
    [1,+\infty)\setminus S(\lambda)=\bigcup_{p=3}^{\infty}\left\{\kappa\in I_p: e^{\frac{2\kappa}{p}i}\in S_{p+1}(\lambda)\right\},
\end{equation*}
that is, 
\begin{equation}
    [1,+\infty)\setminus S(\lambda)=\bigcup_{p=3}^{\infty}\bigcup_{z\in S_{p+1}(\lambda)}\left\{\kappa\in I_p : e^{\frac{2\kappa}{p}i}=z \right\}.\label{trans3_another_S_lambda}
\end{equation}

According to decomposition \eqref{trans3_another_S_lambda}, it suffices to prove that
\begin{equation}
    \#(S_{p+1}(\lambda))<+\infty, \quad p\in \{3,4,\dots\} \label{porp_set_to_prove1}
\end{equation}
and
\begin{equation}
    \#\left(\left\{\kappa\in I_p : e^{\frac{2\kappa}{p}i}=z \right\}\right)<+\infty,  \quad z\in S_{p+1}(\lambda)\quad p\in \{3,4,\dots\}. \label{porp_set_to_prove2}
\end{equation}
We first establish \eqref{porp_set_to_prove1}. For each $p\geqslant 3, \lambda \in \mathbb{C}$, by the definition \eqref{def_s_d} of set $S_{p+1}(\lambda)$, the cardinality  of the set corresponds to the number of zeros of the polynomial $\det(\mathbf{A}_{p+1}(x;\lambda))$, $x\in \mathbb{C}$, and this is certainly a finite number.
As for \eqref{porp_set_to_prove2}, for each $p\in \{3,4,\dots\}$, we observe that $e^{\frac{2\kappa}{p}i}$, $\kappa\geqslant 1$, as a function of $\kappa$ has a   period $p\pi$. Therefore, for each $p\in \{3,4,\dots\}$ and $z\in S_{p+1}(\lambda)$, the number of solutions of equation $e^{\frac{2i\kappa}{p}}=z$ in the domain $I_p$
is no larger than $((p/\gamma)^{1/\beta}-((p-1)/\gamma)^{1/\beta})/(p\pi)$, which is clearly finite. This ensures that inequality \eqref{porp_set_to_prove2} holds.
\end{proof}

Proposition \ref{prop_most_countable} indicates that restriction $\kappa\in S(\lambda)$ is reasonable.  We now return to the error estimate. 

\begin{lemma} \label{lem_inverse}
If $\kappa\in S(\lambda)$, then for any $h\in C(I)$, 
$$
\|h\|_{N_\kappa}\leqslant \|\mathbf{M}_\kappa^{-1}\|_2\|(\mathcal{I}-\lambda\mathcal{K}_{p_{_\kappa}})h\|_{N_\kappa}.
$$
\end{lemma}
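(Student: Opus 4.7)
The plan is to convert the inequality into a statement about vectors in $\mathbb{C}^{N_\kappa}$, apply the standard operator-norm bound for $\mathbf{M}_\kappa^{-1}$, and then translate back to the semi-norm $\|\cdot\|_{N_\kappa}$ via the identity \eqref{relation_N_2}. All the machinery needed is already in place, so this should be a short bookkeeping argument rather than a substantive estimate.

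First I would fix $\kappa\in S(\lambda)$ and $h\in C(I)$, and form the sampled vector $\mathbf{v}_h:=[h(x_j):j\in\mathbb{N}_{N_\kappa}]\in\mathbb{C}^{N_\kappa}$. By Lemma \ref{lem_space_difference} we have
\begin{equation*}
[((\mathcal{I}-\lambda\mathcal{K}_{p_{_\kappa}})h)(x_j):j\in\mathbb{N}_{N_\kappa}]^T = \mathbf{M}_\kappa\mathbf{v}_h,
\end{equation*}
and by Lemma \ref{lemma_invertible} the hypothesis $\kappa\in S(\lambda)$ guarantees that $\mathbf{M}_\kappa$ is invertible, so $\mathbf{M}_\kappa^{-1}$ exists and $\|\mathbf{M}_\kappa^{-1}\|_2$ is finite. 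Therefore $\mathbf{v}_h=\mathbf{M}_\kappa^{-1}(\mathbf{M}_\kappa\mathbf{v}_h)$, and the submultiplicative property of the spectral norm gives
\begin{equation*}
\|\mathbf{v}_h\|_{\ell_2}\leqslant\|\mathbf{M}_\kappa^{-1}\|_2\,\|\mathbf{M}_\kappa\mathbf{v}_h\|_{\ell_2}.
\end{equation*}

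Second, I would divide both sides by $\sqrt{N_\kappa}$ and apply relation \eqref{relation_N_2} to each $\ell_2$ norm. Specifically, $\|\mathbf{v}_h\|_{\ell_2}/\sqrt{N_\kappa}=\|h\|_{N_\kappa}$, while on the right-hand side the vector $\mathbf{M}_\kappa\mathbf{v}_h$ is exactly the vector of values of $(\mathcal{I}-\lambda\mathcal{K}_{p_{_\kappa}})h$ at the collocation points $x_j$, so $\|\mathbf{M}_\kappa\mathbf{v}_h\|_{\ell_2}/\sqrt{N_\kappa}=\|(\mathcal{I}-\lambda\mathcal{K}_{p_{_\kappa}})h\|_{N_\kappa}$. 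Combining these two identities with the $\ell_2$ estimate above yields the claimed bound
\begin{equation*}
\|h\|_{N_\kappa}\leqslant\|\mathbf{M}_\kappa^{-1}\|_2\,\|(\mathcal{I}-\lambda\mathcal{K}_{p_{_\kappa}})h\|_{N_\kappa}.
\end{equation*}

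There is no real obstacle here; the only thing to keep in mind is that $\|\cdot\|_{N_\kappa}$ is only a semi-norm, so passing through $\mathbf{v}_h$ is essential (the map $h\mapsto\mathbf{v}_h$ is consistent with the semi-norm by definition). The key inputs are all supplied by earlier results: Lemma \ref{lem_space_difference} for the matrix representation, Lemma \ref{lemma_invertible} for invertibility on $S(\lambda)$, and \eqref{relation_N_2} to pass between the vector $\ell_2$ norm and the discrete function semi-norm.
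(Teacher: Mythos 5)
Your proof is correct and takes essentially the same route as the paper: both form the sampled vector $\mathbf{v}_h$, invoke Lemma \ref{lemma_invertible} for invertibility on $S(\lambda)$, apply the spectral-norm bound $\|\mathbf{v}_h\|_{\ell_2}=\|\mathbf{M}_\kappa^{-1}\mathbf{M}_\kappa\mathbf{v}_h\|_{\ell_2}\leqslant\|\mathbf{M}_\kappa^{-1}\|_2\|\mathbf{M}_\kappa\mathbf{v}_h\|_{\ell_2}$, and convert back using \eqref{relation_N_2} together with the matrix representation of $\mathcal{I}-\lambda\mathcal{K}_{p_{_\kappa}}$. The only cosmetic difference is that you cite Lemma \ref{lem_space_difference} where the paper points directly to \eqref{def_M_kappa}, but these express the same fact.
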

\begin{proof} 
Since $\kappa\in S(\lambda)$, Lemma \ref{lemma_invertible} ensures that the inverse $\mathbf{M}^{-1}_\kappa$ exists.
Hence, by the property of the $l_2$ norm, we derive for any $\mathbf{u}\in \mathbb{C}^{N_\kappa}$ that
\begin{equation}
    \|\mathbf{M}_\kappa^{-1}\mathbf{u}\|_{l_2}\leqslant \|\mathbf{M}_\kappa^{-1}\|_2\|\mathbf{u}\|_{l_2}. \label{prop_C_kappa:equ3}
\end{equation}
Now, for any $h\in C(I)$, we define $\mathbf{v}_h:=[h(x_j), j\in \mathbb{N}_{\kappa}]^T \in \mathbb{C}^{N_\kappa}$. Applying inequality \eqref{prop_C_kappa:equ3} to $\mathbf{u}:=\mathbf{M}_\kappa \mathbf{v}_h$ yields that
$$
\|\mathbf{v}_h\|_{l_2}\leqslant \|\mathbf{M}_\kappa^{-1}\|_2\|\mathbf{M}_\kappa \mathbf{v}_h\|_{l_2}.    
$$
This together with the equations
$\|h\|_{N_{\kappa}} = \|\mathbf{v}_h\|_{l_2}/\sqrt{N_\kappa}$ and  $\|(\mathcal{I}-\lambda\mathcal{K}_{p_{_\kappa}})h\|_{N_\kappa}=\left.\left\|\mathbf{M}_\kappa \mathbf{v}_h\right\|_{l_2}\right/\sqrt{N_\kappa}$
obtained from relation \eqref{relation_N_2} and \eqref{def_M_kappa}, respectively, we conclude the desired estimate.
\end{proof}

We next integrate Lemmas \ref{lem_err_before_p} and \ref{lem_inverse} to yield an estimate of the error $\|y-\tilde{y}^*\|_{N_\kappa}$.

\begin{theorem}\label{lemma_y-yi}
If $\kappa\in S(\lambda)$, then
\begin{equation}
\|y-\tilde{y}^*\|_{N_\kappa} \leqslant \|\mathbf{M}_\kappa^{-1}\|_2\left( \|\tilde{e}^*\|_{N_\kappa}+|\lambda|\|(\mathcal{K}-\mathcal{K}_{p_{_\kappa}})y\|_\infty\right). \label{lemma_y-yi:target}
\end{equation}
\end{theorem}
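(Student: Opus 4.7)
The plan is to combine the two lemmas that immediately precede the theorem, namely Lemma \ref{lem_err_before_p} and Lemma \ref{lem_inverse}, since together they already capture both ingredients on the right-hand side of the target inequality \eqref{lemma_y-yi:target}.

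First I would invoke Lemma \ref{lem_inverse} with the choice $h := y - \tilde{y}^*$. Since the hypothesis $\kappa \in S(\lambda)$ of the theorem matches the hypothesis of that lemma, this yields
\begin{equation*}
\|y - \tilde{y}^*\|_{N_\kappa} \leqslant \|\mathbf{M}_\kappa^{-1}\|_2 \,\|(\mathcal{I} - \lambda \mathcal{K}_{p_{_\kappa}})(y - \tilde{y}^*)\|_{N_\kappa}.
\end{equation*}
This step transfers the error on the solution side to an error on the residual side, at the price of the factor $\|\mathbf{M}_\kappa^{-1}\|_2$, which is exactly the stability constant appearing in the claim.

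Next I would apply Lemma \ref{lem_err_before_p} to bound the residual-side quantity $\|(\mathcal{I} - \lambda \mathcal{K}_{p_{_\kappa}})(y - \tilde{y}^*)\|_{N_\kappa}$ by $\|\tilde{e}^*\|_{N_\kappa} + |\lambda|\,\|(\mathcal{K} - \mathcal{K}_{p_{_\kappa}})y\|_\infty$. Substituting this estimate into the previous inequality produces the stated bound \eqref{lemma_y-yi:target}, completing the argument.

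There is no real obstacle at this stage: all the substantive work — the identity $\tilde{e}^* = (\mathcal{I} - \lambda \mathcal{K})y - (\mathcal{I} - \lambda \mathcal{K}_{p_{_\kappa}})\tilde{y}^*$, the reduction of $\|\cdot\|_{N_\kappa}$ to $\|\cdot\|_{\ell_2}$ via \eqref{relation_N_2}, and the invertibility of $\mathbf{M}_\kappa$ under $\kappa \in S(\lambda)$ established in Lemma \ref{lemma_invertible} — is already absorbed into Lemmas \ref{lem_err_before_p} and \ref{lem_inverse}. The theorem is therefore essentially a clean composition of a stability estimate with a consistency estimate, in the style of the classical Lax-equivalence argument, and the proof is just a two-line chain of inequalities.
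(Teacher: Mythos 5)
Your proposal is correct and follows exactly the same two-step route as the paper: apply Lemma \ref{lem_inverse} with $h := y - \tilde{y}^*$, then bound the resulting residual norm via Lemma \ref{lem_err_before_p}. There is nothing to add.
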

\begin{proof}
Applying Lemma \ref{lem_inverse} with $h:=y-\tilde{y}^*$ yields
$$
\|y-\tilde{y}^*\|_{N_\kappa}\leqslant \|\mathbf{M}_\kappa^{-1}\|_2\|(\mathcal{I}-\lambda\mathcal{K}_{p_{_\kappa}})(y-\tilde{y}^*)\|_{N_\kappa}. 
$$
This together with inequality \eqref{lem_err_before_p:target} in Lemma \ref{lem_err_before_p} leads to the desired estimate of this lemma.
\end{proof}

\begin{figure*}[!ht]
    \centering
    \subfigure[$(\lambda, \gamma, \beta, q)=(1, 6, 1, 1)$.]{
    	\begin{minipage}{0.45\textwidth}
           \includegraphics[width=1\textwidth]{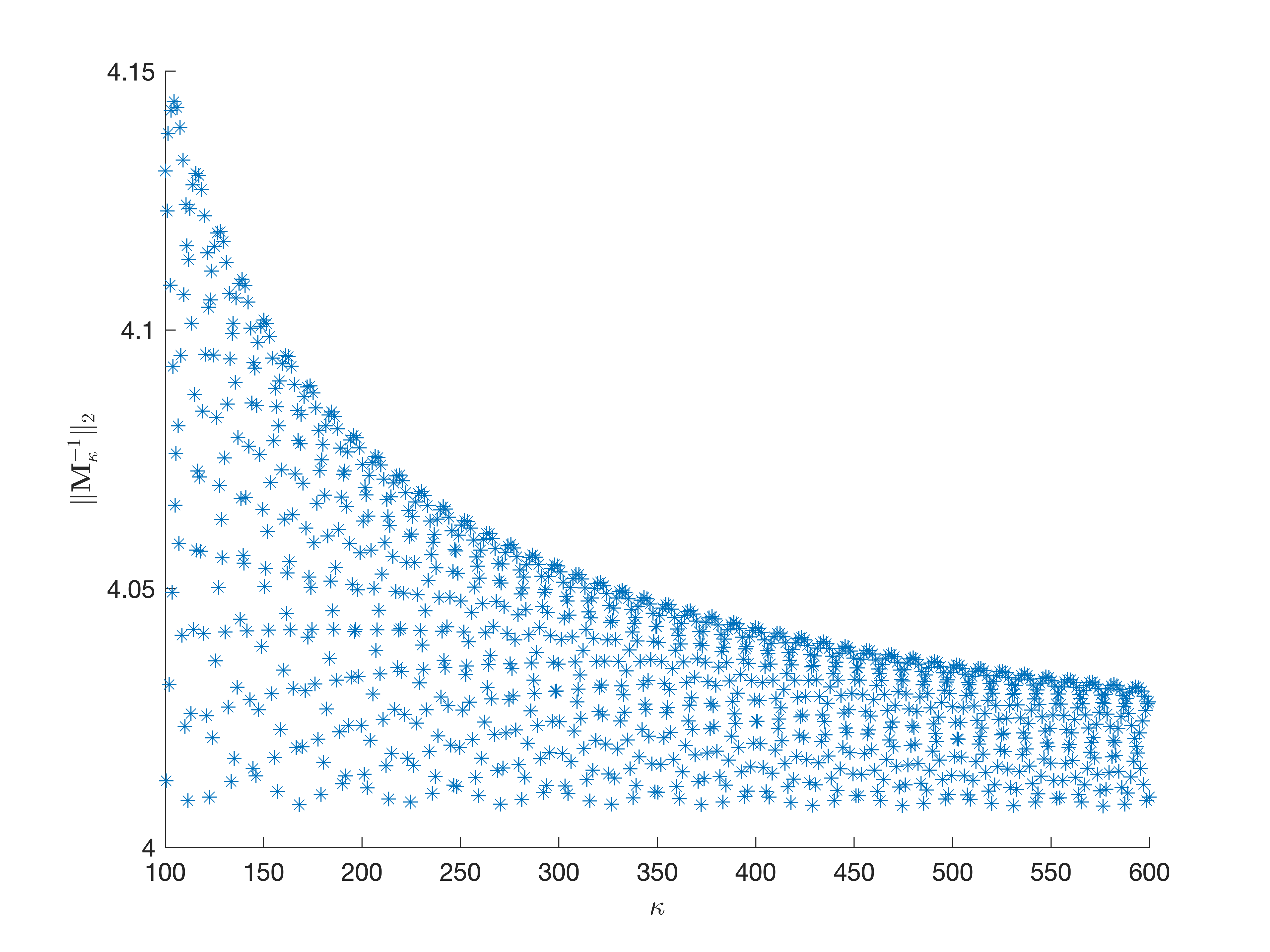}
           \label{lambda_1}
           \end{minipage}}
      \hfil
    \subfigure[$(\lambda, \gamma, \beta, q)=(2, 6, 1, 1)$.]{
    	\begin{minipage}{0.45\textwidth}
           \includegraphics[width=1\textwidth]{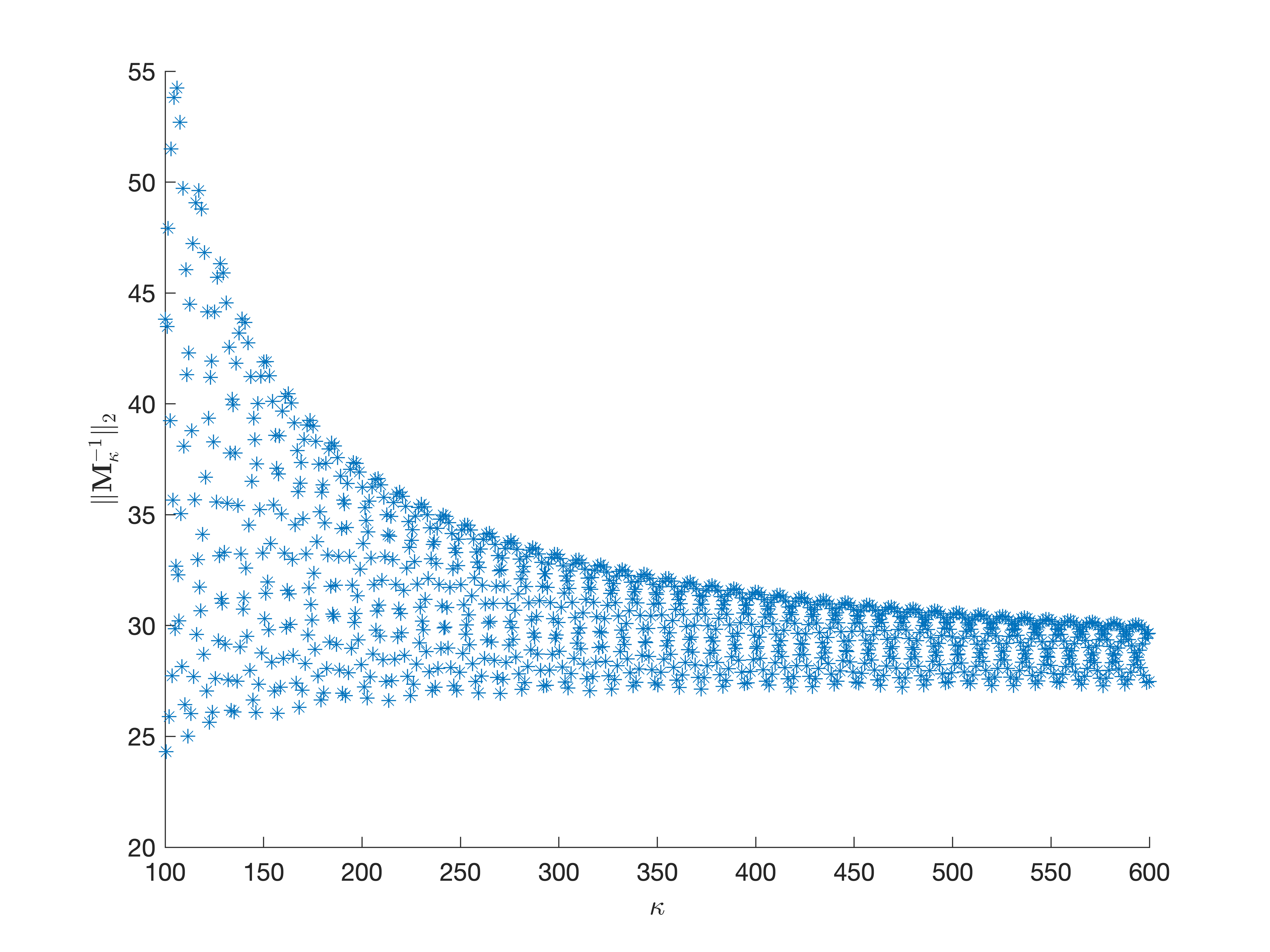}
           \label{lambda_2}
    	\end{minipage}
    }
   \qquad
   \subfigure[$(\lambda, \gamma, \beta, q)=(3, 6, 1, 1)$.]{
    	\begin{minipage}{0.45\textwidth}
           \includegraphics[width=1\textwidth]{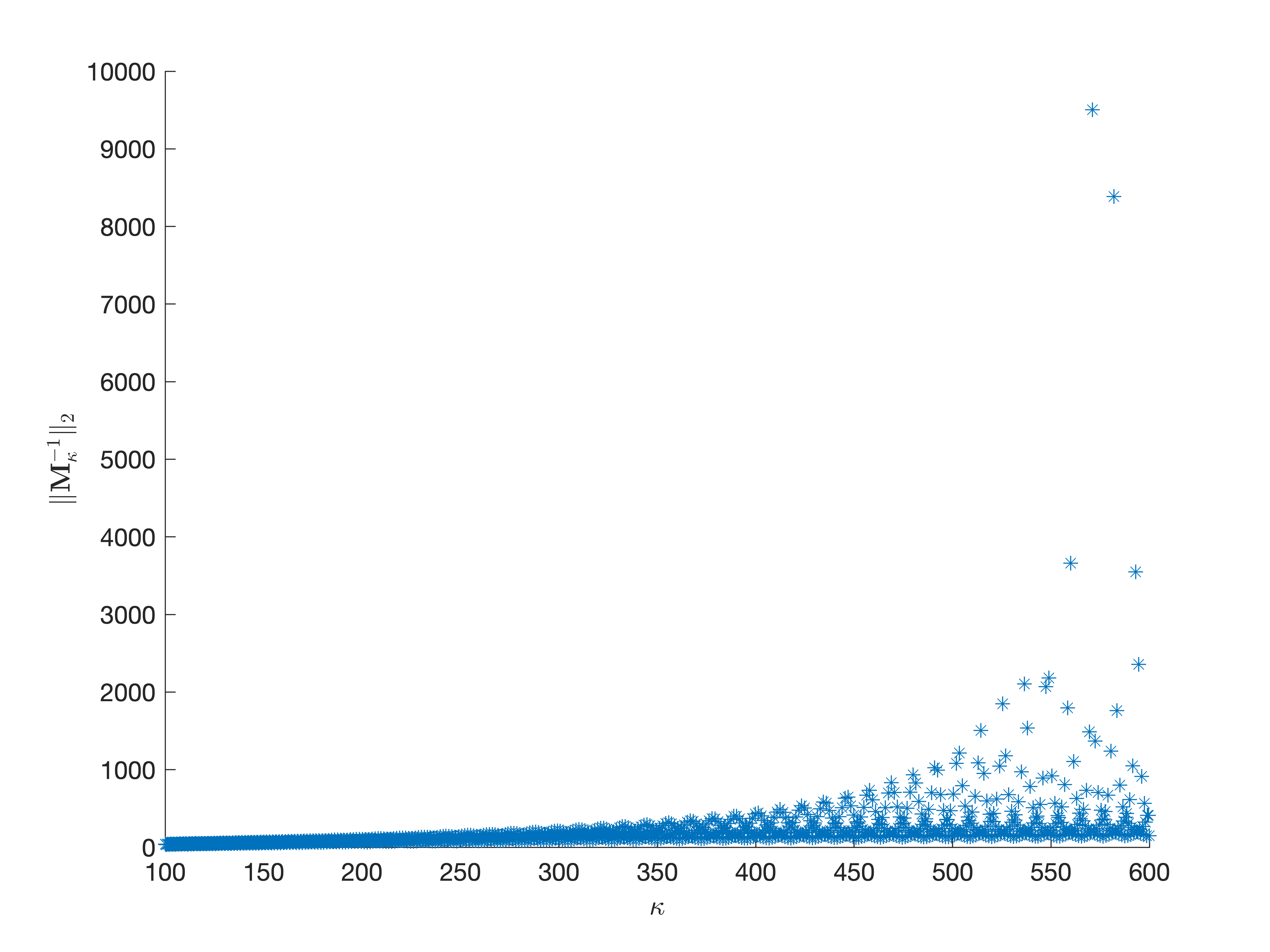}
           \label{lambda_3}
    	\end{minipage}
      }
      \hfil
    \subfigure[$(\lambda, \gamma, \beta, q)=(4, 6, 1, 1)$.]{
    	\begin{minipage}{0.45\textwidth}
           \includegraphics[width=1\textwidth]{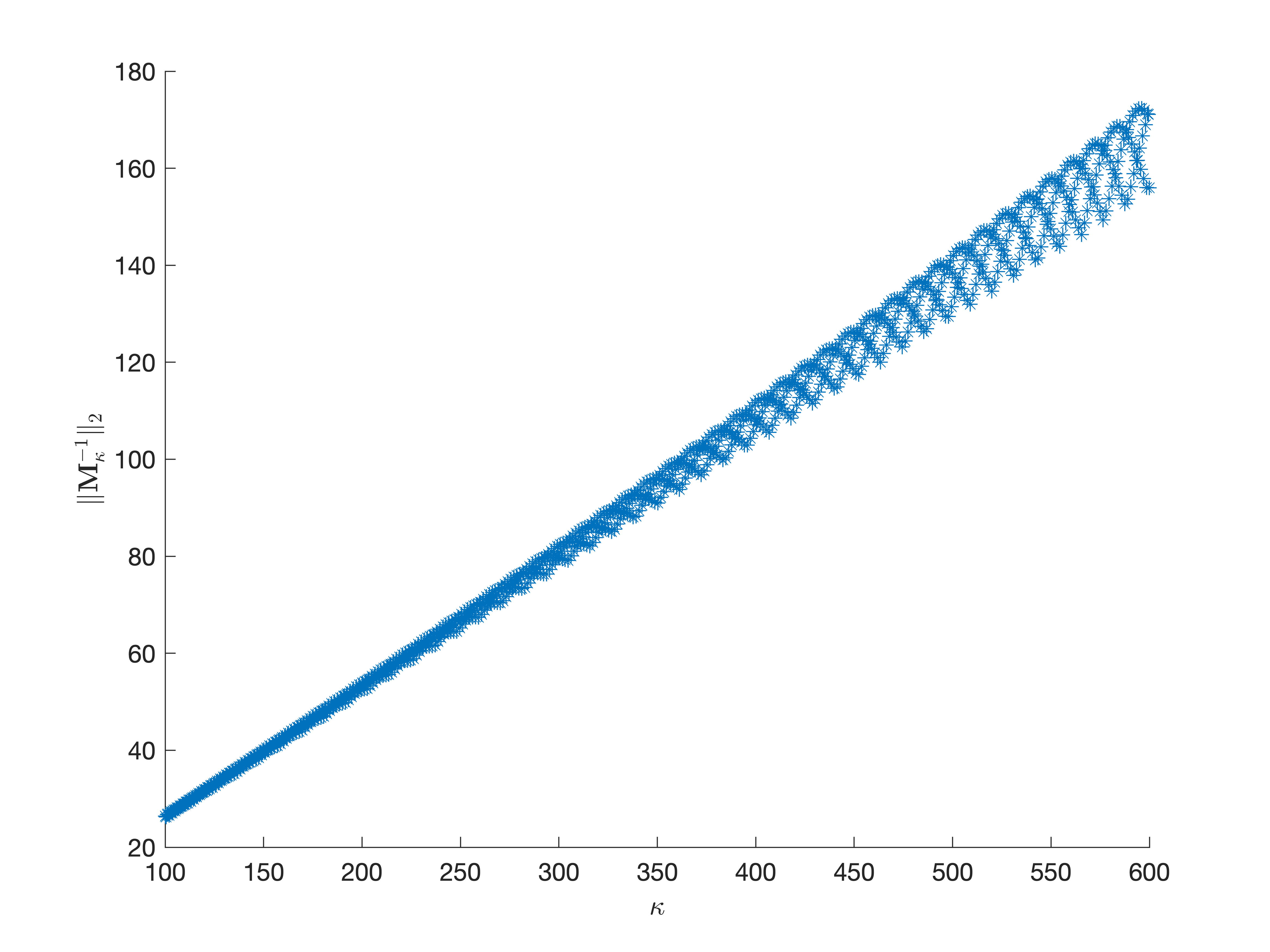}
           \label{lambda_4}
    	\end{minipage}
    }
    
    \qquad
   \subfigure[$(\lambda, \gamma, \beta, q)=(5, 6, 1, 1)$.]{
    	\begin{minipage}{0.45\textwidth}
           \includegraphics[width=1\textwidth]{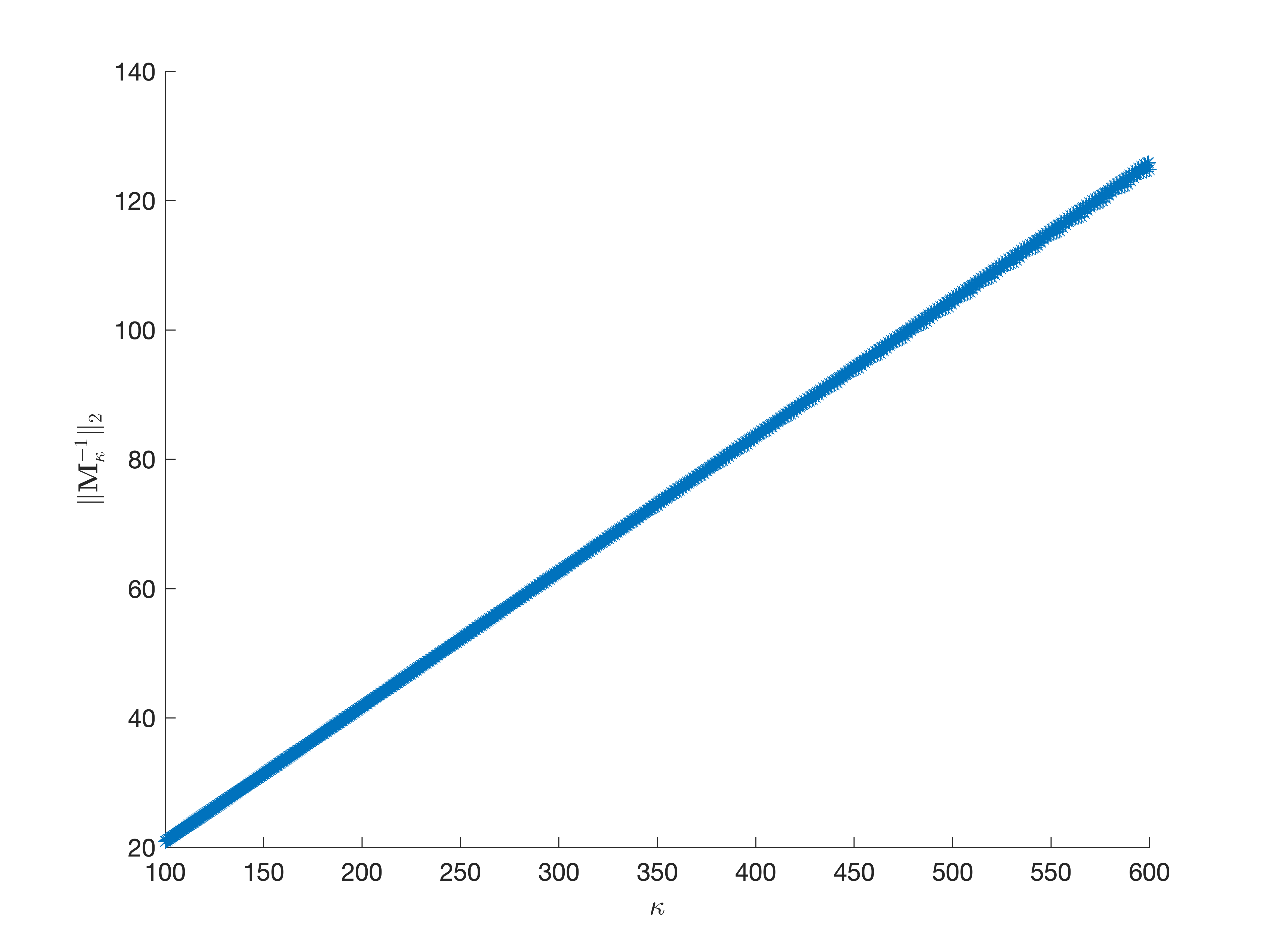}
           \label{lambda_5}
    	\end{minipage}
      }
      \hfil
    \subfigure[$(\lambda, \gamma, \beta, q)=(6, 6, 1, 1)$.]{
    	\begin{minipage}{0.45\textwidth}
           \includegraphics[width=1\textwidth]{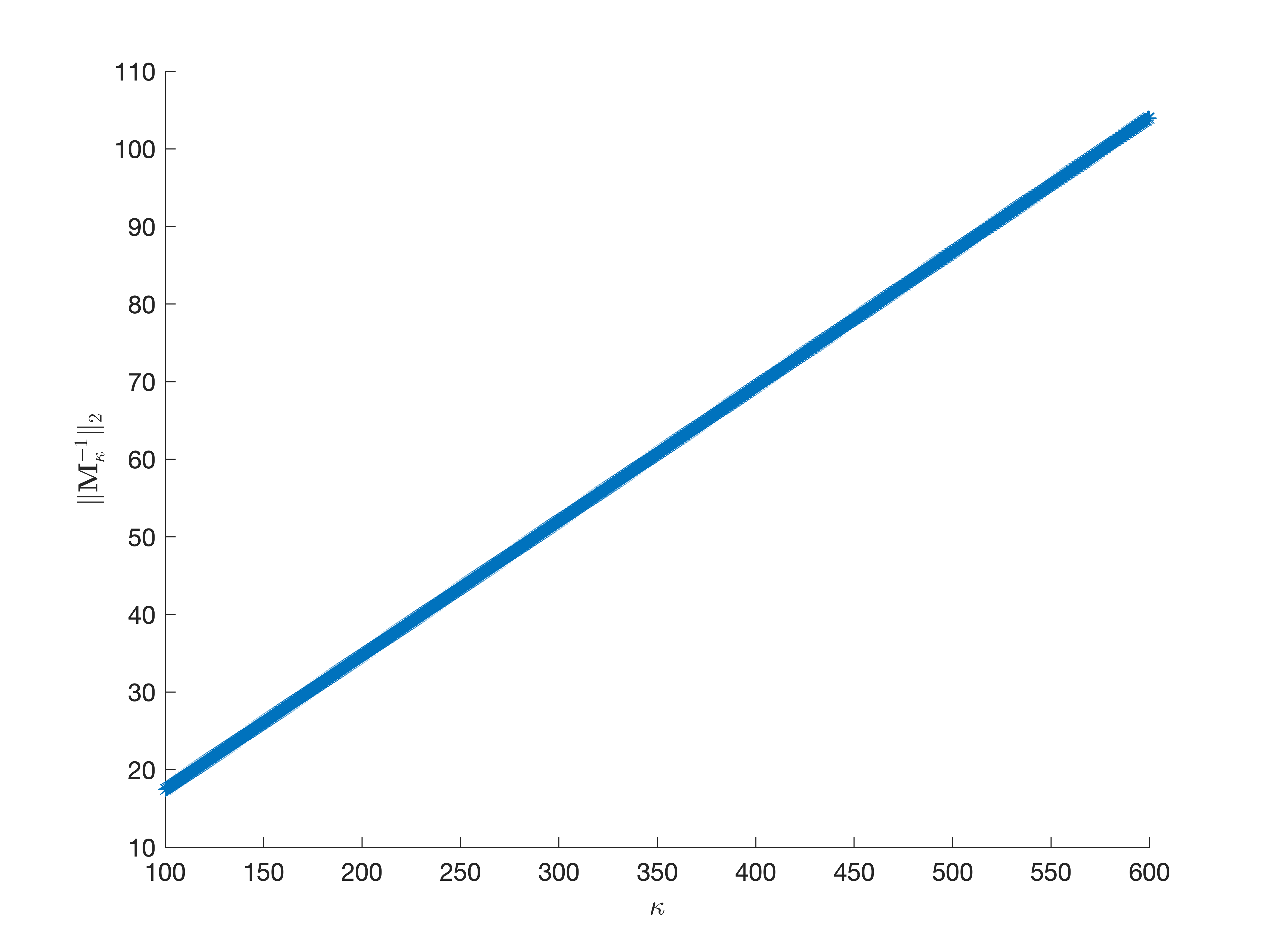}
           \label{lambda_6}
    	\end{minipage}
    }
    \end{figure*}
    \begin{figure*}
        \centering
   \subfigure[$(\lambda, \gamma, \beta, q)=(7, 6, 1, 1)$.]{
    	\begin{minipage}{0.45\textwidth}
           \includegraphics[width=1\textwidth]{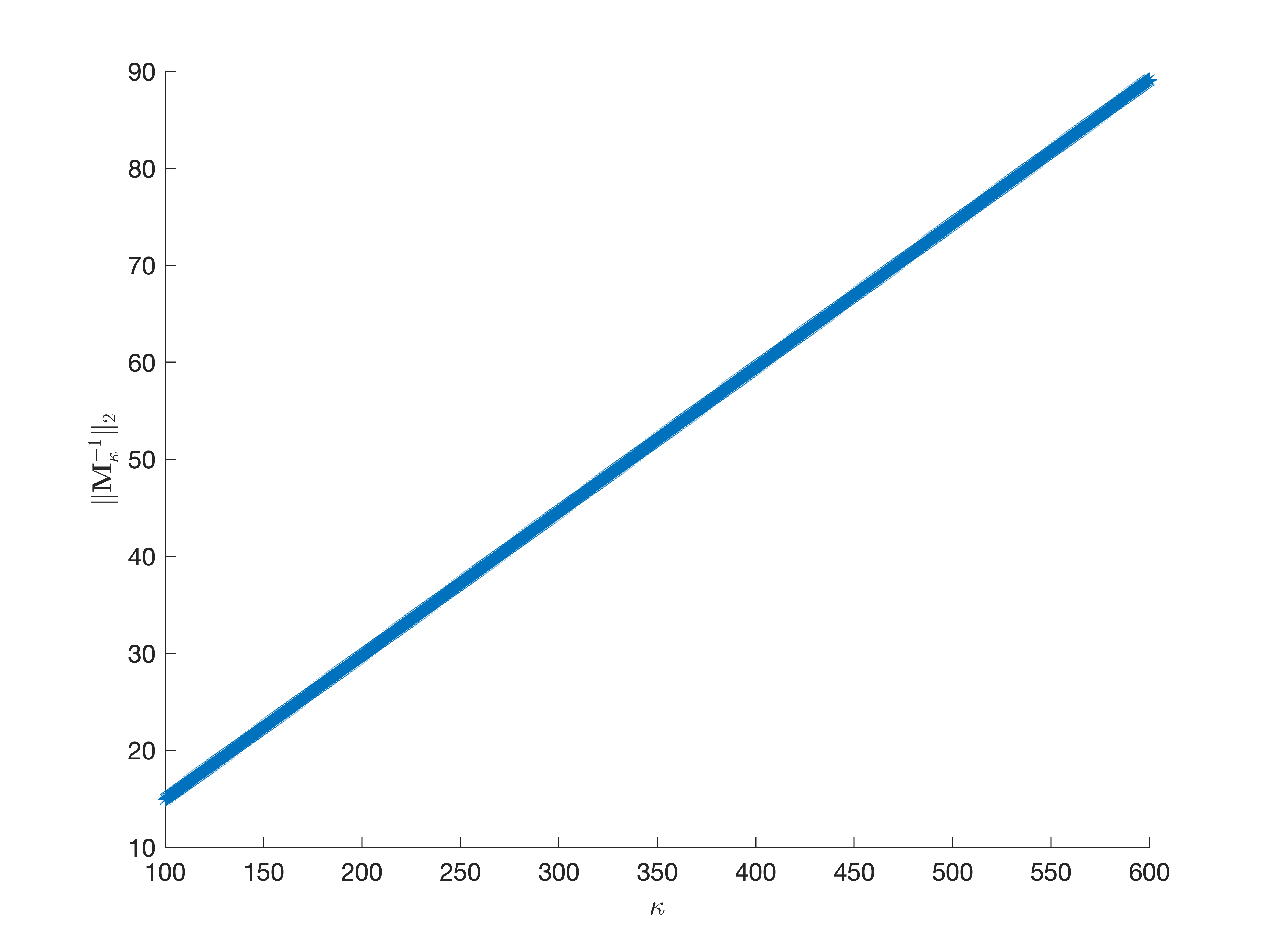}
           \label{lambda_7}
    	\end{minipage}
      }
      \hfil
    \subfigure[$(\lambda, \gamma, \beta, q)=(8, 6, 1, 1)$.]{
    	\begin{minipage}{0.45\textwidth}
           \includegraphics[width=1\textwidth]{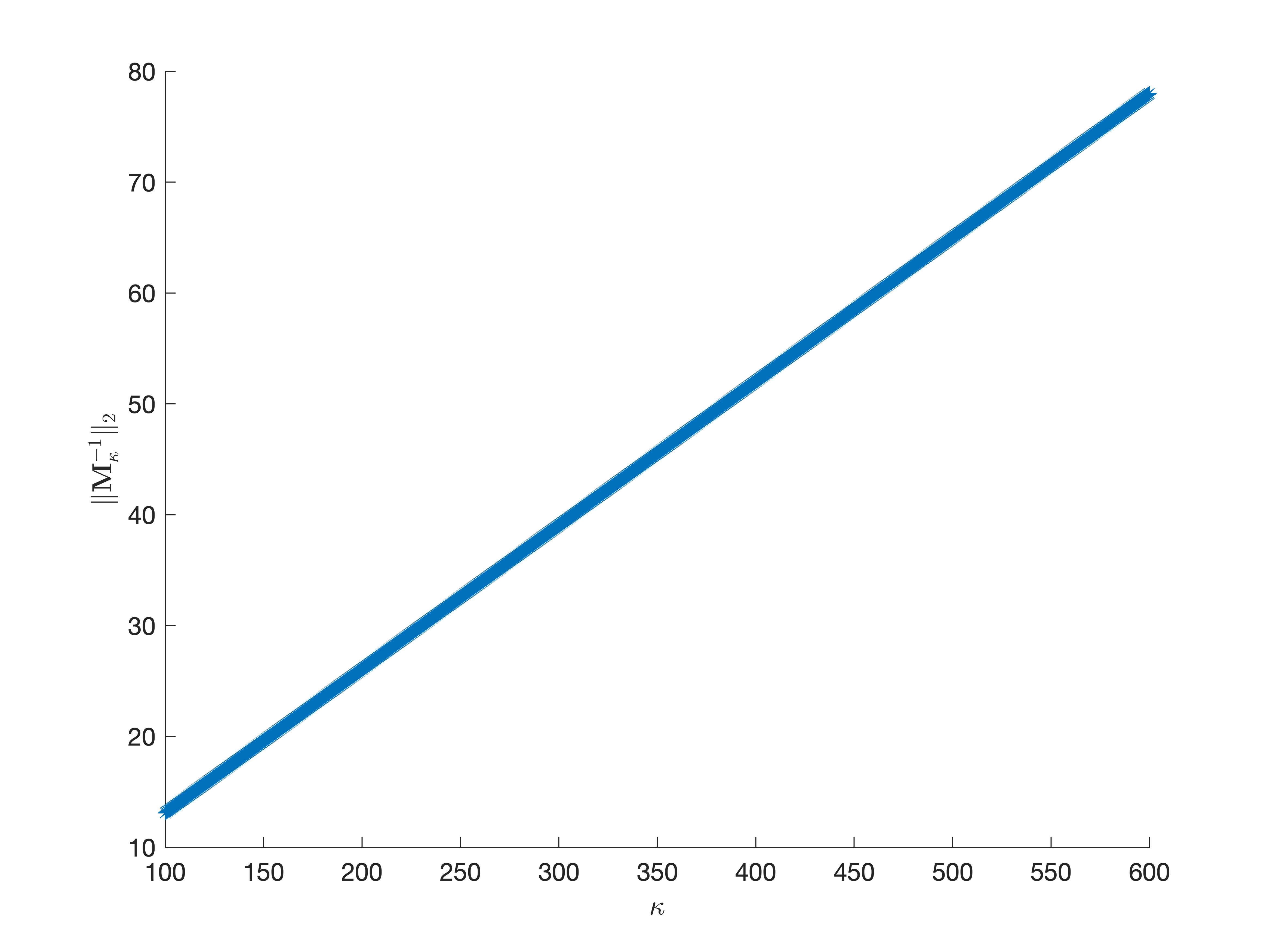}
           \label{lambda_8}
    	\end{minipage}
    }
    \qquad
   \subfigure[$(\lambda, \gamma, \beta, q)=(9, 6, 1, 1)$.]{
    	\begin{minipage}{0.45\textwidth}
           \includegraphics[width=1\textwidth]{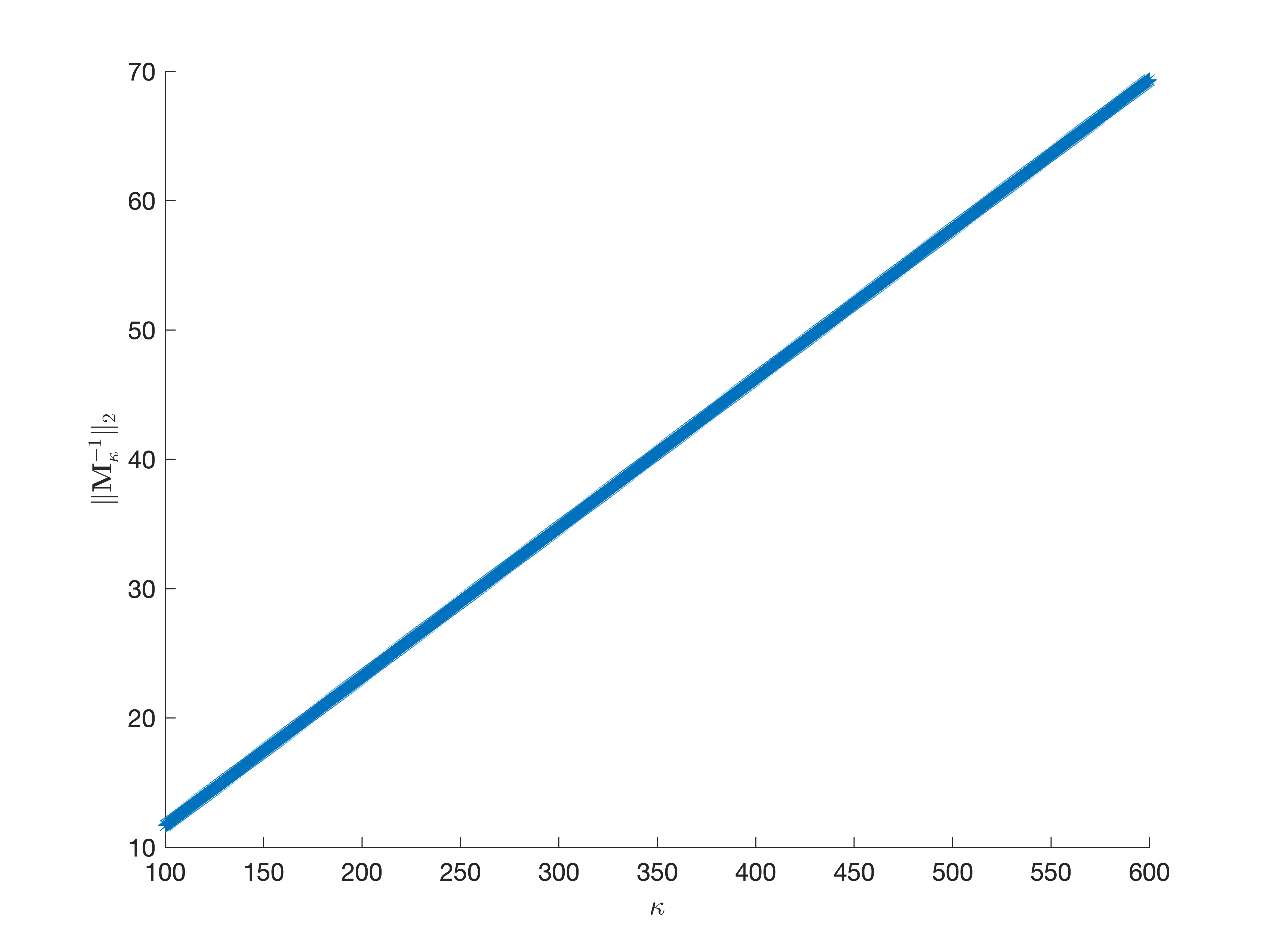}
           \label{lambda_9}
    	\end{minipage}
      }
      \hfil
    \subfigure[$(\lambda, \gamma, \beta, q)=(10, 6, 1, 1)$.]{
    	\begin{minipage}{0.45\textwidth}
           \includegraphics[width=1\textwidth]{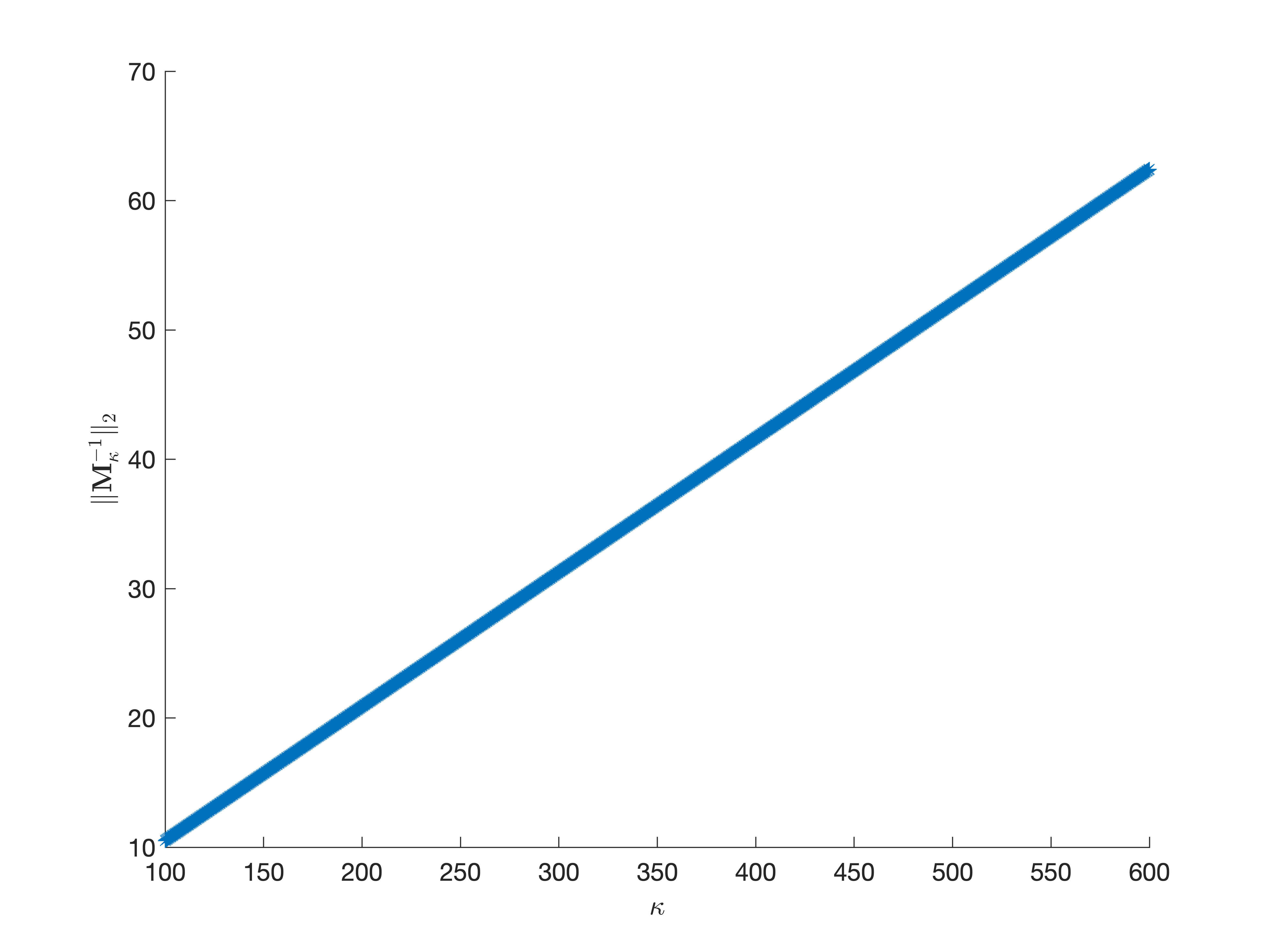}
           \label{lambda_10}
    	\end{minipage}
     }
     \qquad
   \subfigure[$(\lambda, \gamma, \beta, q)=(100, 6, 1, 1)$.]{
    	\begin{minipage}{0.45\textwidth}
           \includegraphics[width=1\textwidth]{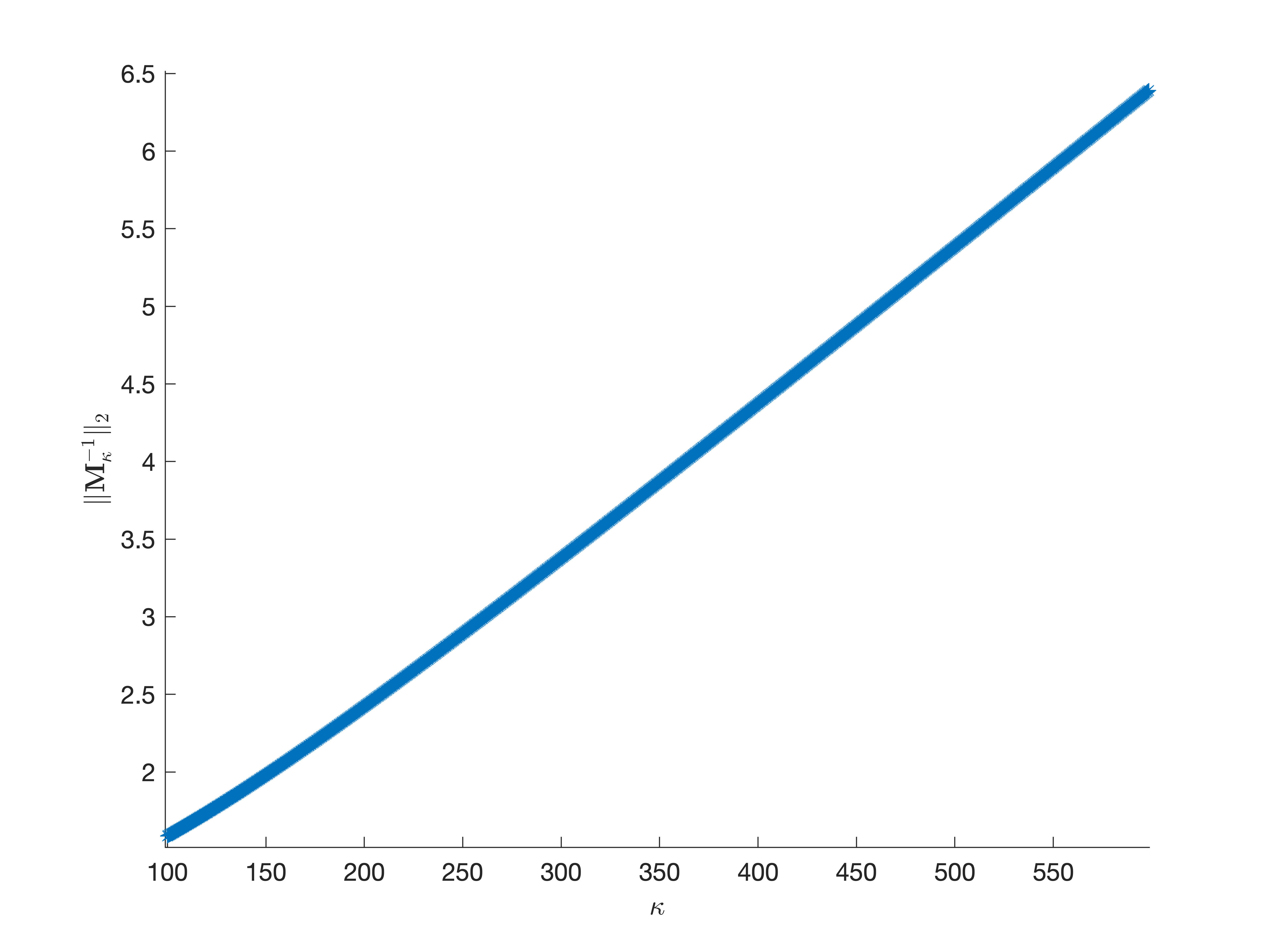}
           \label{lambda_100}
    	\end{minipage}
      }
      \hfil
    \subfigure[$(\lambda, \gamma, \beta, q)=(1000, 6, 1, 1)$.]{
    	\begin{minipage}{0.45\textwidth}
           \includegraphics[width=1\textwidth]{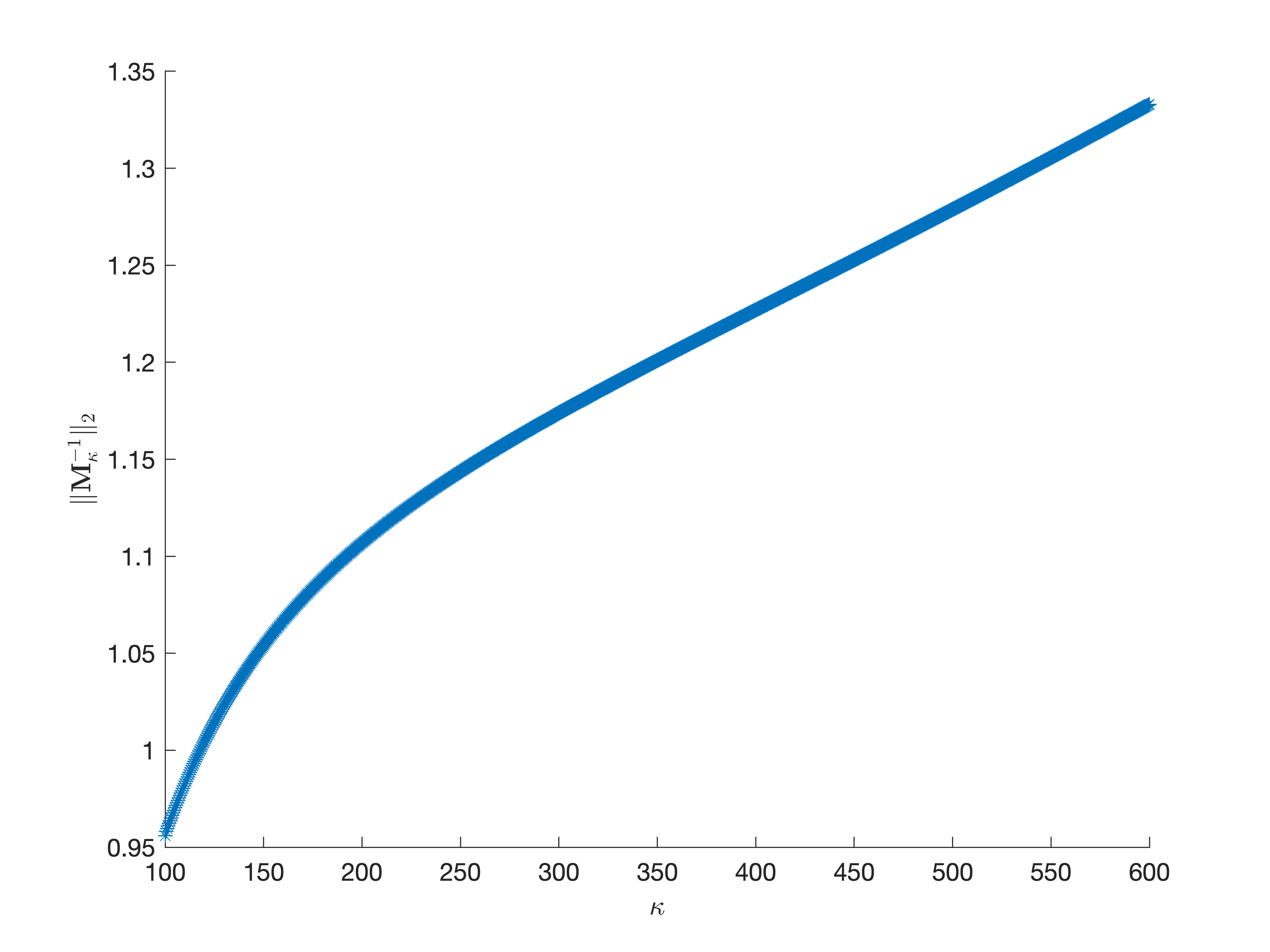}
           \label{lambda_1000}
    	\end{minipage}
    }
    \caption{Values of $\|\mathbf{M}_{\kappa}^{-1}\|_{N_\kappa}$ as a function of $\kappa$ for different $(\lambda, \gamma, \beta, q)$.}
    \label{figure:M_kappa}
\end{figure*}

The coefficient $\|\mathbf{M}_\kappa^{-1}\|_2$ of the estimate \eqref{lemma_y-yi:target} depends on $\kappa$. Figure \ref{figure:M_kappa} illustrates the dependence of  $\|\mathbf{M}_\kappa^{-1}\|_2$ on $\kappa$ for $\lambda=1, 2, \dots, 10, 100, 1000$. The figure indicates that  $\|\mathbf{M}_\kappa^{-1}\|_2$ is either bounded above by a constant when $\lambda=1,2$ or increasing ``linearly'' as $\kappa$ increases when $\lambda=3, 4, \dots, 10, 100, 1000$. In all the linear cases, the largest $\|\mathbf{M}_\kappa^{-1}\|_2$ value is 180 for $\lambda=4$ at $\kappa=600$.
It is desirable to have  $\|\mathbf{M}_\kappa^{-1}\|_2$  independent of $\kappa$, which occurs for $\lambda=1,2,3$ in Figure \ref{figure:M_kappa}.
For this purpose, we make the following hypothesis.

\begin{hypothesis} \label{hypothesis_A}
There exists a $\kappa_0\geqslant 1$ and a constant $C>0$ such that $\|\mathbf{M}_\kappa^{-1}\|_2\leqslant C$ for all $\kappa\in S(\lambda)\cap [\kappa_0, +\infty)$. 
\end{hypothesis}

Under Hypothesis \ref{hypothesis_A}, Theorem \ref{lemma_y-yi} can be strengthened.

\begin{theorem} \label{thm_y-yi}
Suppose the solution $y\in H^m_{\kappa, 0}(I)$ of the oscillatory Fredholm integral equation \eqref{fredholm_equation_operator} has the form $\eqref{solution}$ and Hypothesis \ref{hypothesis_A} holds.
If parameters are chosen to satisfy  
\begin{equation*}
    \Gamma \geqslant 0, \quad \beta\geqslant 1, \quad \gamma\geqslant \Gamma+3,\quad q\in \mathbb{N}, 
\end{equation*} 
and for any $\kappa\in S(\lambda)\cap [\kappa_0, \infty)$, let $p_{_\kappa}:=\lceil \gamma \kappa^\beta \rceil$, $N_\kappa:=p_{_\kappa}q+1$, then
\begin{equation*}
\|y-\tilde{y}^*\|_{N_\kappa} \leqslant C\left( \|\tilde{e}^*\|_{N_\kappa}+|\lambda|\|(\mathcal{K}-\mathcal{K}_{p_{_\kappa}})y\|_\infty\right). 
\end{equation*}
\end{theorem}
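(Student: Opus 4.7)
The plan is to derive this statement as essentially a direct corollary of the preceding Theorem~\ref{lemma_y-yi} by simply invoking Hypothesis~\ref{hypothesis_A} to replace the $\kappa$-dependent factor $\|\mathbf{M}_\kappa^{-1}\|_2$ with the uniform constant $C$. The conditions on $\Gamma$, $\beta$ and $\gamma$ stated in the theorem match exactly those of the rule \eqref{rule_beta_gamma}, which is what makes the construction $p_{_\kappa}:=\lceil \gamma\kappa^\beta\rceil$ admissible and ensures that the discrete oscillatory integral operator $\mathcal{K}_{p_{_\kappa}}$ approximates $\mathcal{K}$ in the sense guaranteed by Proposition~\ref{proposition_integral_y}. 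In particular, the quadrature error $\|(\mathcal{K}-\mathcal{K}_{p_{_\kappa}})y\|_\infty$ appearing on the right-hand side is well-defined and controlled.

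The first step I would take is to observe that for any $\kappa\in S(\lambda)\cap[\kappa_0,\infty)$, in particular $\kappa\in S(\lambda)$, so by Lemma~\ref{lemma_invertible} the matrix $\mathbf{M}_\kappa$ is invertible and the quantity $\|\mathbf{M}_\kappa^{-1}\|_2$ is finite. Thus the hypothesis of Theorem~\ref{lemma_y-yi} is satisfied, and applying that theorem yields
\begin{equation*}
\|y-\tilde{y}^*\|_{N_\kappa}\leqslant \|\mathbf{M}_\kappa^{-1}\|_2\left(\|\tilde{e}^*\|_{N_\kappa}+|\lambda|\|(\mathcal{K}-\mathcal{K}_{p_{_\kappa}})y\|_\infty\right).
\end{equation*}

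The second and final step is to invoke Hypothesis~\ref{hypothesis_A}: since $\kappa\in S(\lambda)\cap[\kappa_0,\infty)$, the hypothesis furnishes the uniform bound $\|\mathbf{M}_\kappa^{-1}\|_2\leqslant C$. Substituting this into the inequality above yields the desired estimate. There is essentially no technical obstacle in carrying out this proof, because all of the genuine work has been done earlier: the quadrature error analysis culminating in Proposition~\ref{proposition_integral_y} handles the term $\|(\mathcal{K}-\mathcal{K}_{p_{_\kappa}})y\|_\infty$, Lemma~\ref{lem_err_before_p} relates $(\mathcal{I}-\lambda\mathcal{K}_{p_{_\kappa}})(y-\tilde{y}^*)$ to the loss and quadrature errors, Lemma~\ref{lem_inverse} converts this into control of $\|y-\tilde{y}^*\|_{N_\kappa}$ via $\|\mathbf{M}_\kappa^{-1}\|_2$, and Proposition~\ref{prop_most_countable} justifies that restricting to $\kappa\in S(\lambda)$ is mild. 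The only non-derived ingredient is Hypothesis~\ref{hypothesis_A} itself, whose plausibility is supported numerically by Figure~\ref{figure:M_kappa}; so the ``hard part'' is not in the present proof but rather in the empirical/theoretical justification of Hypothesis~\ref{hypothesis_A}, which the paper accepts as an assumption.
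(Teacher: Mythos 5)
Your proof is correct and matches the paper's own argument exactly: the paper's proof of Theorem~\ref{thm_y-yi} simply states that it follows directly from Theorem~\ref{lemma_y-yi} combined with Hypothesis~\ref{hypothesis_A}, which is precisely your two-step substitution of the uniform bound $C$ for $\|\mathbf{M}_\kappa^{-1}\|_2$. The additional remarks you offer about Lemmas~\ref{lem_err_before_p} and~\ref{lem_inverse} and Proposition~\ref{proposition_integral_y} are accurate context but not needed for the proof itself.
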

\begin{proof}
This theorem follows directly from Theorem \ref{lemma_y-yi} and Hypothesis \ref{hypothesis_A}.
\end{proof}

To close this section, we show that for $\lambda$ satisfying $|\lambda|\in (0, \frac{1}{2})$ , how parameters $\beta, \gamma, q$ may be chosen to ensure the validity of Hypothesis \ref{hypothesis_A}. 


\begin{lemma} \label{lemma_inverse}
    For $\lambda$ satisfying $|\lambda|\in (0,\frac{1}{2})$,  if the parameters are chosen to satisfying 
    \begin{equation}
   \Gamma\geqslant 0,  \quad    \beta\geqslant 1,\quad  \gamma > \max\left\{\Gamma+3, \frac{4|\lambda|^2}{1-4|\lambda|^2}\right\}, \quad  q\in \left[ 1, \frac{1}{4|\lambda|^2}-\frac{1}{\lceil \gamma\rceil}\right)\cap \mathbb{N}, \label{rule_beta_gamma_q}
    \end{equation} 
$p_{_\kappa}:=\lceil \gamma \kappa^\beta\rceil$ and $N_\kappa:=p_{_\kappa} q+1$, then for any $\kappa \geqslant 1$, the matrix $\mathbf{M}_\kappa \in \mathbb{C}^{N_\kappa\times N_\kappa}$ as defined in equation \eqref{def_M_kappa} is invertible and
\begin{equation}
        \left\| \mathbf{M}_\kappa^{-1} \right\|_2\leqslant \frac{1}{1-\eta}, \label{lemma_inverse_target}
\end{equation}
where
     \begin{equation}
    \eta:= 2|\lambda|\sqrt{q+\frac{1}{\lceil \gamma \rceil}} \label{def_eta}
   \end{equation}
satisfies the condition $0<\eta<1$. 
\end{lemma}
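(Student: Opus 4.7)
The proof plan is a standard Neumann series argument: since $\mathbf{M}_\kappa = \mathbf{I}_\kappa - \lambda\mathbf{B}_\kappa/p_{_\kappa}$, it suffices to show that $\|\lambda\mathbf{B}_\kappa/p_{_\kappa}\|_2 \leqslant \eta$ with $\eta\in(0,1)$; invertibility and the bound $\|\mathbf{M}_\kappa^{-1}\|_2 \leqslant 1/(1-\eta)$ then follow automatically from $\mathbf{M}_\kappa^{-1} = \sum_{k\geqslant 0}(\lambda\mathbf{B}_\kappa/p_{_\kappa})^k$. So the only real work is to estimate $\|\mathbf{B}_\kappa\|_2$.

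Rather than attempting spectral analysis directly, I would use the well-known interpolation inequality $\|\mathbf{B}_\kappa\|_2 \leqslant \sqrt{\|\mathbf{B}_\kappa\|_1\,\|\mathbf{B}_\kappa\|_\infty}$ (where $\|\cdot\|_1$ and $\|\cdot\|_\infty$ denote the maximum absolute column and row sums, respectively). The structure of $\mathbf{B}_\kappa$ in \eqref{def_B_kappa} is extremely friendly to this: because $|\omega_\kappa|=1$, every nonzero entry has modulus $1$ or $2$. The nonzero columns are exactly those indexed by $l \in \{1, q+1, 2q+1, \dots, (p_{_\kappa}-1)q+1, N_\kappa\}$; the two boundary columns contribute column sum $N_\kappa$ each, while the remaining $p_{_\kappa}-1$ interior ones contribute $2N_\kappa$, so $\|\mathbf{B}_\kappa\|_1 = 2N_\kappa$. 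Likewise, each row contains precisely two nonzero entries of modulus $1$ (from columns $1$ and $N_\kappa$) and $p_{_\kappa}-1$ nonzero entries of modulus $2$, giving uniform row sum $2p_{_\kappa}$ and hence $\|\mathbf{B}_\kappa\|_\infty = 2p_{_\kappa}$.

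Combining these with $N_\kappa = qp_{_\kappa}+1$ yields
\begin{equation*}
\left\|\frac{\lambda\mathbf{B}_\kappa}{p_{_\kappa}}\right\|_2 \leqslant \frac{|\lambda|}{p_{_\kappa}}\sqrt{2N_\kappa\cdot 2p_{_\kappa}} = 2|\lambda|\sqrt{\frac{N_\kappa}{p_{_\kappa}}} = 2|\lambda|\sqrt{q+\frac{1}{p_{_\kappa}}}.
\end{equation*}
Since $\beta\geqslant 1$ and $\kappa\geqslant 1$ imply $p_{_\kappa}=\lceil\gamma\kappa^\beta\rceil\geqslant\lceil\gamma\rceil$, the right-hand side is bounded above by $\eta$ as defined in \eqref{def_eta}.

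It remains to verify $0 < \eta < 1$. The positivity is automatic since $|\lambda|>0$. For $\eta<1$, I would unwind: $\eta<1$ iff $q + 1/\lceil\gamma\rceil < 1/(4|\lambda|^2)$, which is precisely the upper endpoint of the interval prescribed for $q$ in \eqref{rule_beta_gamma_q}; the strict lower bound $\gamma > 4|\lambda|^2/(1-4|\lambda|^2)$ is exactly what is needed to ensure that this interval contains the integer $q=1$ and is therefore nonempty. The main (and only) potential obstacle is a careful bookkeeping of the parameter constraints to confirm that these conditions chain together as intended; the matrix analysis itself is entirely mechanical once the column/row-sum interpolation bound is invoked.
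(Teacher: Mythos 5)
Your proof is correct and follows essentially the same route as the paper's: the interpolation bound $\|\mathbf{B}_\kappa\|_2 \leqslant \sqrt{\|\mathbf{B}_\kappa\|_1\,\|\mathbf{B}_\kappa\|_\infty}$ with $\|\mathbf{B}_\kappa\|_1 = 2(qp_{_\kappa}+1)$, $\|\mathbf{B}_\kappa\|_\infty = 2p_{_\kappa}$, the observation $p_{_\kappa}\geqslant\lceil\gamma\rceil$ for $\kappa\geqslant 1$, and the Neumann-series invertibility criterion once $\eta<1$, which follows from the constraints on $\gamma$ and $q$ exactly as you unwound them.
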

\begin{proof}
First, we show the existence of parameters $\Gamma$, $\beta$, $\gamma$, and $q$ that satisfy condition \eqref{rule_beta_gamma_q}.  Clearly, we can find $\Gamma, \beta, \gamma$ according to \eqref{rule_beta_gamma_q}. By the choice of $\gamma$, we know  that $\lceil \gamma\rceil>\frac{4|\lambda|^2}{1-4|\lambda|^2}$, which implies that
$\frac{1}{4|\lambda|^2}-\frac{1}{\lceil \gamma\rceil} > 1$.
Thus, we can find a $q$ as in \eqref{rule_beta_gamma_q}.

Next we estimate $\left\| \lambda\mathbf{B}_\kappa/p_{_\kappa}\right\|_2$.
Note that $\left\| \mathbf{B}_\kappa\right\|^2_2\leqslant \left\|\mathbf{B}_\kappa\right\|_1 \left\|\mathbf{B}_\kappa\right\|_\infty$. This bound together with  
$ \left\|\mathbf{B}_\kappa\right\|_1 = 2(qp_{_\kappa}+1)$ and $\left\|\mathbf{B}_\kappa\right\|_\infty =2p_{_\kappa}$ yields that
$\left\|\mathbf{B}_\kappa\right\|_2
    \leqslant 2\sqrt{qp_{_\kappa}^2+p_{_\kappa}}$, which implies
\begin{equation}\label{A-Bound-onB_kappa}
        \left\|\frac{\lambda }{p_{_\kappa}}\mathbf{B}_\kappa\right\|_2
    \leqslant 2|\lambda|\sqrt{q+\frac{1}{p_{_\kappa}}}. 
\end{equation}
Meanwhile, by the definition \eqref{def_p_kappa} of $p_{_\kappa}$, we know that $p_{_\kappa}=\lceil \gamma \kappa^\beta\rceil \geqslant \lceil \gamma\rceil$ for all $\kappa\geqslant 1$. Using it in inequality \eqref{A-Bound-onB_kappa}, we observe for all $\kappa\geqslant 1$ that
\begin{equation}
        \left\|\frac{\lambda }{p_{_\kappa}}\mathbf{B}_\kappa\right\|_2\leqslant 2|\lambda|\sqrt{q+\frac{1}{\lceil \gamma \rceil}}=\eta,\label{lemma_inverse_equ3}
\end{equation}
whose right-hand side is independent of $\kappa$. 

Furthermore, by the choice \eqref{rule_beta_gamma_q} of $q$, we notice that
$q< \frac{1}{4|\lambda|^2}-\frac{1}{\lceil \gamma\rceil}$. Substituting it into the definition \eqref{def_eta} of $\eta$ yields that $0<\eta<1$. This estimation together with inequality \eqref{lemma_inverse_equ3}, we conclude the matrix $\mathbf{M}_\kappa=\mathbf{I}_\kappa-\frac{\lambda }{p_{_\kappa}}\mathbf{B}_\kappa$ is invertible and the inequality \eqref{lemma_inverse_target} holds.
\end{proof}

Lemma \ref{lemma_inverse} conveys that for $\lambda$ with $|\lambda|\in (0,\frac{1}{2})$, the parameters $\Gamma$, $\beta$, $\gamma$, and $q$ can be chosen according to the rule \eqref{rule_beta_gamma_q} such that Hypothesis \ref{hypothesis_A} holds with $\kappa_0:=1$ and $C:=\frac{1}{1-\eta}$, and in this case, there holds $S(\lambda)=[1,+\infty)$.
The next theorem follows from Lemma \ref{lemma_inverse} and Theorem \ref{thm_y-yi}.

\begin{theorem} \label{theorem_lambda_y-yi}
Suppose $\lambda$ satisfies $|\lambda|\in (0,\frac{1}{2})$ and the solution $y\in H^m_{\kappa, 0}(I)$ of the oscillatory Fredholm integral equation \eqref{fredholm_equation_operator}  can be written as the form \eqref{solution}. If the parameters $\Gamma$, $\beta$, $\gamma$ and $q$ are chosen to satisfy  \eqref{rule_beta_gamma_q},
and $p_{_\kappa}:=\lceil \gamma \kappa^\beta \rceil$, $N_\kappa:=p_{_\kappa}q+1$, then for all $\kappa\geqslant 1$, 
\begin{equation*}
\|y-\tilde{y}^*\|_{N_\kappa} \leqslant \frac{1}{1-\eta}\left( \|\tilde{e}^*\|_{N_\kappa}+|\lambda|\|(\mathcal{K}-\mathcal{K}_{p_{_\kappa}})y\|_\infty\right), 
\end{equation*}
where $\eta\in(0,1)$ as defined in equation \eqref{def_eta} is independent of the wavenumber $\kappa$.
\end{theorem}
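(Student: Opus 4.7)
The plan is to derive Theorem~\ref{theorem_lambda_y-yi} as an immediate consequence of the two previously established results: Lemma~\ref{lemma_inverse} (the uniform bound on $\|\mathbf{M}_\kappa^{-1}\|_2$ valid in the regime $|\lambda|\in(0,\tfrac12)$) and Theorem~\ref{thm_y-yi} (the abstract error estimate that holds under Hypothesis~\ref{hypothesis_A}). The main work has already been done upstream, so the job here is essentially to verify that the hypotheses line up.

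First, I would check that the parameter rule \eqref{rule_beta_gamma_q} assumed in the theorem implies the parameter rule needed by Theorem~\ref{thm_y-yi}, namely $\Gamma\geqslant 0$, $\beta\geqslant 1$, $\gamma\geqslant \Gamma+3$ and $q\in\mathbb{N}$. This is immediate from inspection, since \eqref{rule_beta_gamma_q} is strictly stronger: it additionally enforces $\gamma > \tfrac{4|\lambda|^2}{1-4|\lambda|^2}$ and constrains $q$ to lie in a nonempty interval of positive integers, which is exactly what was used in Lemma~\ref{lemma_inverse} to guarantee invertibility of $\mathbf{M}_\kappa$.

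Second, I would invoke Lemma~\ref{lemma_inverse} to conclude that, for every $\kappa\geqslant 1$, the matrix $\mathbf{M}_\kappa$ is invertible and $\|\mathbf{M}_\kappa^{-1}\|_2\leqslant 1/(1-\eta)$ with $\eta\in(0,1)$ as in \eqref{def_eta}. By Lemma~\ref{lemma_invertible}, invertibility of $\mathbf{M}_\kappa$ is equivalent to $\kappa\in S(\lambda)$; hence the lemma tells us $S(\lambda)\supseteq[1,+\infty)$, and since by definition $S(\lambda)\subseteq[1,+\infty)$, we actually have $S(\lambda)=[1,+\infty)$. Consequently Hypothesis~\ref{hypothesis_A} is satisfied with $\kappa_0:=1$ and $C:=1/(1-\eta)$.

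Third, with Hypothesis~\ref{hypothesis_A} verified and the other parameter requirements met, I would apply Theorem~\ref{thm_y-yi} directly: for every $\kappa\in S(\lambda)\cap[\kappa_0,\infty)=[1,+\infty)$,
\[
\|y-\tilde{y}^*\|_{N_\kappa} \leqslant C\bigl(\|\tilde{e}^*\|_{N_\kappa}+|\lambda|\|(\mathcal{K}-\mathcal{K}_{p_{_\kappa}})y\|_\infty\bigr),
\]
and substituting $C=1/(1-\eta)$ yields the claimed inequality. There is no serious obstacle here; the only subtlety worth flagging is confirming that the universality in $\kappa$ comes from the fact that $\eta$ depends only on $\lambda,\gamma,q$ and not on $\kappa$, so the constant $1/(1-\eta)$ is genuinely $\kappa$-independent, as required.
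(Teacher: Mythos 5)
Your proposal is correct and follows essentially the same path as the paper: the paper also derives this theorem by observing that under \eqref{rule_beta_gamma_q}, Lemma~\ref{lemma_inverse} yields $S(\lambda)=[1,+\infty)$ and verifies Hypothesis~\ref{hypothesis_A} with $\kappa_0=1$ and $C=1/(1-\eta)$, and then applies Theorem~\ref{thm_y-yi}. You simply spell out, slightly more explicitly than the paper's one-line justification, the intermediate checks that the parameter rule \eqref{rule_beta_gamma_q} subsumes the hypotheses of Theorem~\ref{thm_y-yi} and that $\eta$ is $\kappa$-independent.
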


Combining Theorem \ref{theorem_lambda_y-yi} with Proposition \ref{proposition_integral_y} leads to  the following corollary.

\begin{corollary}
If the assumptions of Theorem \ref{theorem_lambda_y-yi} hold, then for all $\kappa\geqslant 1$, 
\begin{equation*}
\|y-\tilde{y}^*\|_{N_\kappa} \leqslant \frac{1}{1-\eta}\left( \|\tilde{e}^*\|_{N_\kappa}+\frac{132\tau|\lambda|}{5\gamma \kappa^\beta}+\frac{81\tau|\lambda|(\Gamma+3)^m}{5\gamma^m \kappa^{m(\beta-1)}}\right), 
\end{equation*}
where $\eta\in(0,1)$ as defined in equation \eqref{def_eta} is independent of the wavenumber $\kappa$.
\end{corollary}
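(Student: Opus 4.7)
The corollary is an immediate consequence of combining the two main results already at our disposal, so the plan is essentially a substitution argument rather than a new piece of analysis. First, I would invoke Theorem \ref{theorem_lambda_y-yi}, whose hypotheses are exactly the assumptions we are given. This yields
\begin{equation*}
\|y-\tilde{y}^*\|_{N_\kappa} \leqslant \frac{1}{1-\eta}\left( \|\tilde{e}^*\|_{N_\kappa}+|\lambda|\,\|(\mathcal{K}-\mathcal{K}_{p_{_\kappa}})y\|_\infty\right)
\end{equation*}
for every $\kappa\geqslant 1$, with $\eta\in(0,1)$ as defined in \eqref{def_eta}. This single inequality already isolates the quadrature error $\|(\mathcal{K}-\mathcal{K}_{p_{_\kappa}})y\|_\infty$ as the only remaining piece to bound.

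Next, I would verify that Proposition \ref{proposition_integral_y} applies. The solution $y$ has the decomposition \eqref{solution} by assumption, and the parameter conditions required by \eqref{rule_beta_gamma_q} imply in particular $\beta\geqslant 1$ and $\gamma\geqslant \Gamma+3$, which are exactly the hypotheses of Proposition \ref{proposition_integral_y}. Since $p_{_\kappa}:=\lceil \gamma\kappa^\beta\rceil$ is defined consistently, Proposition \ref{proposition_integral_y} gives
\begin{equation*}
\|(\mathcal{K}-\mathcal{K}_{p_{_\kappa}})y\|_\infty \leqslant \frac{132\tau}{5\gamma \kappa^\beta}+\frac{81\tau(\Gamma+3)^m}{5\gamma^m \kappa^{m(\beta-1)}}
\end{equation*}
for all $\kappa\geqslant 1$.

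Finally, I would substitute this estimate into the previous inequality, factoring $|\lambda|$ into the two terms from the quadrature bound. This yields the claimed bound
\begin{equation*}
\|y-\tilde{y}^*\|_{N_\kappa} \leqslant \frac{1}{1-\eta}\left( \|\tilde{e}^*\|_{N_\kappa}+\frac{132\tau|\lambda|}{5\gamma \kappa^\beta}+\frac{81\tau|\lambda|(\Gamma+3)^m}{5\gamma^m \kappa^{m(\beta-1)}}\right),
\end{equation*}
valid for all $\kappa\geqslant 1$, and the $\kappa$-independence of $\eta$ is inherited from Theorem \ref{theorem_lambda_y-yi}. There is no real obstacle here: the only small bookkeeping task is to confirm compatibility of the two parameter regimes, which is immediate since \eqref{rule_beta_gamma_q} is strictly stronger than the hypotheses of Proposition \ref{proposition_integral_y}.
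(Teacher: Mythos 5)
Your proposal is correct and mirrors the paper's own (implicit) proof exactly: the corollary is stated in the paper as an immediate combination of Theorem \ref{theorem_lambda_y-yi} and Proposition \ref{proposition_integral_y}, which is precisely what you do, and your check that the parameter conditions \eqref{rule_beta_gamma_q} subsume those of Proposition \ref{proposition_integral_y} is the only verification needed.
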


\section{Multi-Grade Learning Model}

The deep neural network model \eqref{DNN_opt}, which we will refer to as the single-grade learning model, has a computational issue: Solutions of the optimization problem \eqref{DNN_opt} is often trapped in a local minimizer or even a saddle point due to too many layers used in DNNs. As a result, the loss error $\|\tilde{e}^*\|_{N_\kappa}$ may not be as small as we expect. In particular, in solving the oscillatory equation, as we will demonstrate in the next section, the single-grade learning model suffers from the spectral bias, that is, approximate solutions catch only low frequency components of the exact solution. 
To address this issue, following \cite{Xu:2023aa} we develop a multi-grade learning model for numerical solutions of equation \eqref{fredholm_equation}. The multi-grade learning model introduced in \cite{Xu:2023aa} was
motivated by the human education process which arranges learning in grades.


We now describe the multi-grade learning model for numerical solutions of equation \eqref{fredholm_equation}. Recalling the DNN that appears in minimization problem \eqref{DNN_opt} has $n$ layers, we choose $L$ positive integers $n_l$, for $l=1,2, \dots, L$, such that 
$n=\sum_{l=1}^L n_l$. Instead of solving one minimization problem \eqref{DNN_opt} of $n$ layers, we solve $L$ intertwined minimization problems, which have $n_l$ layers, for $l=1,2,\dots, L$, respectively. 
For grade 1, we define the error function by
\begin{equation*}
    \tilde{e}_1\left(\{\mathbf{W}_{j}, \mathbf{b}_{j}\}_{j=1}^{n_1}; s\right):=
    \left(f- (\mathcal{I}-\lambda\mathcal{K}_{p_{_\kappa}})\mathcal{T}\mathcal{N}_{n_1}(\{\mathbf{W}_{j}, \mathbf{b}_{j}\}_{j=1}^{n_1};\cdot)\right)(s)\in C(I),\quad s\in I  \label{def_e1_dis}
\end{equation*} 
and find  $\{\mathbf{W}_{1,j}^*, \mathbf{b}_{1,j}^*\}_{j=1}^{n_1}$ by solving the optimization problem
\begin{equation*}
 \min \left\{\left\|\tilde{e}_1\left(\{\mathbf{W}_{j}, \mathbf{b}_{j}\}_{j=1}^{n_1}; \cdot \right)\right\|_{N_\kappa}^2:  \mathbf{W}_{j}\in \mathbb{R}^{m_{1,j}\times m_{1,j-1}}, \mathbf{b}_{j}\in \mathbb{R}^{m_{1,j}}, j\in \mathbb{N}_{n_1}\right\}, \label{multi_opt_problem_dis}
 \end{equation*}
with $m_{1,0}=1$ and $m_{1, n_1}=2$. 
Once the optimal parameters $\{\mathbf{W}_{1,j}^*, \mathbf{b}_{1,j}^*\}_{j=1}^{n_1}$ are learned,  we obtain the feature of grade 1 as
$$\mathbf{g}_1(s):= \mathcal{F}_{n_1-1}\left(\{\mathbf{W}_{1,j}^*, \mathbf{b}_{1, j}^*\}_{j=1}^{n_1-1};s\right),\quad  s\in I, $$
where $\mathcal{F}_{n_1-1}$ is defined as in \eqref{dnn_feature}, and the approximate solution of grade 1 is
$$
\mathbf{f}_1(s):=\mathbf{W}_{1,n_1}^*\mathbf{g}_1(s)+\mathbf{b}_{1,n_1}^*, \quad s\in I
$$
with an error defined by 
\begin{equation*}
\tilde{e}_1^*(s) := \tilde{e}_1\left(\{\mathbf{W}^*_{1,j}, \mathbf{b}^*_{1,j}\}_{j=1}^{n_1}; s\right)\in C(I), \quad s\in I. \label{def_fin_e1_dis}
\end{equation*}

Suppose that the neural networks  $\mathbf{g}_l:I\to \mathbb{R}^{m_{l,n_l-1}}, \mathbf{f}_l:I\to \mathbb{R}^2,\tilde{e}_l^*:I\to \mathbb{C}$ of grade $l< L$ have been learned and we will learn grade $l+1$.  To this end,  we define the error function of grade $l+1$ by
\begin{equation*}
\tilde{e}_{l+1}\left(\left\{\mathbf{W}_{j}, \mathbf{b}_{ j}\right\}_{j=1}^{n_{l+1}}; s\right):= \tilde{e}^*_{l}(s)-\left[(\mathcal{I}-\lambda\mathcal{K}_{p_{_\kappa}})\mathcal{T}\mathcal{N}_{n_{l+1}}\left(\{\mathbf{W}_{j}, \mathbf{b}_{j}\}_{j=1}^{n_{l+1}};\mathbf{g}_{l}(\cdot)\right)\right](s),\quad s\in I, \label{def_e_q+1_dis}
\end{equation*}
and find  $\{\mathbf{W}_{l+1,j}^*, \mathbf{b}_{l+1,j}^*\}_{j=1}^{n_{l+1}}$ by solving the optimization problem
\begin{equation*} \min \left\{\left\|\tilde{e}_{l+1}\left(\left\{\mathbf{W}_{j}, \mathbf{b}_{j}\right\}_{j=1}^{n_{l+1}}; \cdot\right)\right\|_{N_\kappa}^2: \mathbf{W}_{j}\in \mathbb{R}^{m_{l+1,j}\times m_{l+1,j-1}}, \mathbf{b}_{j}\in \mathbb{R}^{m_{l+1,j}}, j\in \mathbb{N}_{n_{l+1}}\right\}, \label{mult_weight_dis}
\end{equation*}
with $m_{l+1,0}=m_{l,n_l-1}$ and $m_{l+1, n_{l+1}}=2$.
Note that when solving optimization problem \eqref{mult_weight_dis}, the parameters $\mathbf{W}_{\mu,j}^*$, $\mathbf{b}_{\mu,j}^*$, $j=1,2,\dots, n_\mu$, $\mu=1,2,\dots, l$, involved in $\mathbf{g}_{l}$ are fixed.
Then we define the feature of grade $l+1$ by
\begin{equation}
    \mathbf{g}_{l+1}(s):= \mathcal{F}_{n_{l+1}-1}\left(\left\{\mathbf{W}_{l+1,j}^*, \mathbf{b}_{l+1, j}^*\right\}_{j=1}^{n_{l+1}-1};\mathbf{g}_l(s)\right),\quad s\in I, \label{feature_grade_l_1}
\end{equation}
and the solution component of grade $l+1$ by
$$ 
\mathbf{f}_{l+1}(s):=\mathbf{W}_{l+1,n_{l+1}}^*\mathbf{g}_{l+1}(s)+\mathbf{b}_{l+1. n_{l+1}}^*,\quad s\in I.
$$
Substituting equation \eqref{feature_grade_l_1} into the above equation, we see that $\mathbf{f}_{l+1}$ is actually the newly learned neural network stacked on the top of the feature layer  learned in the previous grade. The optimal error of grade $l+1$ is defined by  
\begin{equation*}
\tilde{e}^*_{l+1}(s) := \tilde{e}_{l+1}\left(\left\{\mathbf{W}^*_{l+1,j}, \mathbf{b}^*_{l+1,j}\right\}_{j=1}^{n_{l+1}}; s\right)\in C(I), \quad s\in I. \label{def_fin_e_q+1}
\end{equation*}
We continue this process for  $l<L$. 
The multi-grade DNN approximation for the solution $y$ is given by
$$
\tilde{y}^*_{L}:=\sum_{l=1}^{L} \mathcal{T} \mathbf{f}_l\in C(I).
$$ 
We will show in the next section that the multi-grade DNN solution $\tilde{y}^*_{L}$ is better than the single-grade DNN solution in catching the oscillation features of the exact solution of equation \eqref{fredholm_equation} and thus it has higher approximation accuracy.

\section{Numerical Experiments}
This section is devoted to presentation of numerical experiments that assess and compare the performance of the proposed single-grade learning model and multi-grade learning model with the traditional collocation method. Specifically, we focus on evaluating the methods' performance across varying wavenumbers and sample sizes.  All the  experiments presented in this section were performed on a Ubuntu Server 18.04 LTS 64bit equipped with Intel Xeon Platinum 8255C CPU @ 2.5GHz and NVIDIA Tesla T4 GPU.

In our experiments, we solved the oscillatory Fredholm integral equation \eqref{fredholm_equation_operator} with $\lambda = 0.2$. For comparison purposes, we chose its exact solution as
\begin{equation*}
y(s):=s+(3s^2+2s+1)e^{i\kappa s}+(s+2)e^{-i\kappa s}, \quad s\in I, \label{example}
\end{equation*}
which clearly satisfies the condition \eqref{solution}. The right-hand side $f$ of the integral equation \eqref{fredholm_equation_operator} is then calculated accordingly.
We solved the equation \eqref{fredholm_equation_operator} with the right-hand side specified above using the single-grade, multi-grade models and the traditional collocation method, and compared the accuracy of these methods.
The relative $L_2$ error defined by
\begin{equation*}
\frac{\|y-\tilde{y}\|_2}{\|y\|_2}\approx \frac{1}{\|y\|_2}\left(\frac{2}{l}\left[|y(s_0)-\tilde{y}(s_0)|^2+2\sum_{j=1}^{l-1}|y(s_j)-\tilde{y}(s_j)|^2+|y(s_l)-\tilde{y}(s_l)|^2\right]\right)^{\frac{1}{2}}, \label{relative_error}
\end{equation*}
was used to evaluate the accuracy of these methods, where $y$ and  $\tilde{y}$ denotes the exact solution and an approximate solution, respectively, $l:=20000$, $s_j:=-1+\frac{2j}{l}$, for $j\in \mathbb{Z}_{l+1}$, and $\|y\|_2$ was calculated analytically.


The wavenumber $\kappa$ significantly influences the oscillatory behavior of the solution $y$ and thus impacts the accuracy of its numerical approximations. To evaluate the numerical performance of our proposed model across varying oscillatory levels, we experimented with the $\kappa$ values chosen from  the set $\{100, 150, 200, 250, 300, 350, 400\}$. For the DNN method, we chose $\Gamma=2$, $\gamma=6$, $\beta=1$, $q\in \{1,2\}$ which satisfies the condition \eqref{rule_beta_gamma_q}. Moreover, we set $p_\kappa:=\lceil \gamma \kappa^\beta \rceil=\lceil 6\kappa\rceil$, and investigated the impact of the sample size on the approximation accuracy by considering $N_\kappa:=6\kappa+1$ and  $N_\kappa:=12\kappa+1$,  corresponding to the choices $q:=1$ and $q:=2$, respectively. 

For the training model, we introduce a regularization \cite{Xu:2023aa} to the loss function of optimization problem \eqref{DNN_opt} to address overfitting. Specifically, 
for the single-grade learning model, the training loss is defined as
\begin{equation*}
training\_loss:=\frac{1}{N_\kappa}\sum_{l=1}^{N_\kappa} \left|f(x_l)- ((\mathcal{I}-\mathcal{K}_{p_{_\kappa}}) \mathcal{T}\mathcal{N}_n(\{\mathbf{W}_j, \mathbf{b}_j\}_{j=1}^n);\cdot)(x_l) \right|^2+\mu \sum_{j=1}^n \|\mathbf{W}_j\|_F^2,
\end{equation*}
where $\mu>0$ is the regularization parameter, $\|\cdot\|_F$ is the Frobenius norm and $x_l$ are training data points to be specified later.
A sparse regularization was used in \cite{zeng2022sparse}.
The training loss for the multi-grade learning model is defined in a similar manner.
All models of single-grade and multi-grade  were validated with the validation loss defined as
\begin{equation*}
validation\_loss:=\frac{1}{512}\sum_{l=1}^{512} \left|f(x'_l)- ((\mathcal{I}-\mathcal{K}_{p_{_\kappa}}) Y(x'_l) \right|^2,
\end{equation*}
where $Y:=\tilde{y}^*$ for the single-grade model and  $Y:=\tilde{y}_L^*$ for the multi-grade model, and $x_l'$ are validation data points to be specified.

Now, we specify the training and validation data.

\textbf{Training data:} For each chosen $\kappa$, we  equidistantly chose $N_\kappa$ points $x_j$ from $I$ and compute $f(x_j)$, for $j\in \mathbb{N}_{N_\kappa}$, where $N_\kappa$ is either equal to $6\kappa+1$ or $12\kappa+1$, and thus obtain the training data $\{(x_j,f(x_j))\}_{j=1}^{N_\kappa}\in I\times \mathbb{C}$.


\textbf{Validation data:} The validation set is given by $\{(x'_j,f(x'_j))\}_{j=1}^{512}\in I\times \mathbb{C}$, where $x'_j$, $j\in \mathbb{N}_{512}$, are uniformly distributed on the interval $I$. Note that $N_\kappa$ is not equal to $512$ for any chosen $\kappa$ value.

In our experiments, the activation functions of hidden layers are all chosen to be  the $sin$ function for the single-grade and multi-grade networks.
We chose three single-grade network architectures SGL-1, SGL-2 and SGL-3 as shown in Table \ref{Table:single_grade_structure},
where $[n]$ indicates a fully-connected layer with $n$ neurons. Note that the network SGL-2 is an extension of the network SGL-1 with two additional hidden layers and SGL-3 is an extension of SGL-2 with four additional hidden layers. 

\begin{table}[!ht]
\centering
\begin{tabular}{l|l}
\hline
Methods & Network Structure                                                                     \\ \hline
SGL-1   & $[1]\to[256]\to[256]\to[2]$                             
\\ \hline
SGL-2   & $[1]\to[256]\to[256]\to[128]\to[128]\to[2]$              
\\ \hline
SGL-3   & $[1]\to[256]\to[256]\to[128]\to[128]\to[64]\to[64]\to[32]\to[32]\to[2]$ \\ \hline
\end{tabular}
\caption{Network structure of single-grade learning model}
\label{Table:single_grade_structure}
\end{table}

We use the three grades for the multi-grade learning model   corresponding to SGL-1, SGL-2 and SGL-3, respectively. Their network architectures are described below:

$\mathrm{Grade\,1:} [1]\to[256]\to[256]\to[2]. $

$\mathrm{Grade\,2:} [1]\to[256]_F\to[256]_F\to[128]\to[128]\to[2]. $

$\mathrm{Grade\,3:} [1]\to[256]_F\to[256]_F\to[128]_F\to[128]_F\to[64]\to[64]\to[32]\to[32]\to[2].$\\
Here,  $[n]_F$ indicates a layer having its parameters trained in the previous grades and remained fixed in training of the current grade. 

We now describe the training and the tuning strategies for the single-grade models and the associated multi-grade model. For the single-grade models, we used $3,500$ epochs for training. While for the multi-grade model, we utilized $500$, $1,000$ and $2,000$ epochs for grades 1, 2 and 3, respectively. For all training processes, we uniformly set the initial learning rate $10^{-2}$ and have it exponentially decay to the final learning rate $10^{-7}$. We used regularization parameters  $0, 10^{-6}, 10^{-5}, 10^{-4}$ and batch sizes $64, 128, 256$, and chose the best pair of the hyper-parameters in the sense that with it the model produces the minimum validation error over five independent experiments for each pair of the parameters. We list in Table \ref{hyper_parameter} the best pair of the hyper-parameters found for all models. 

Table \ref{hyper_parameter} shows that MGDL incorporates implicit regularization. For the case $N:=6\kappa+1$, the regularization parameters for MGDL are all 1e-6 very small, and for the case $N:=12\kappa+1$, all regularization parameters for MGDL turn out to be zero, indicating that the MGDL model has a built-in 
regularization feature and additional regularization may not be needed. While the regularization parameters for the deepest model SGL-3 are all 1e-4, hinting that it requires regularization.

\begin{table}[!ht]
\scalebox{0.9}{
\begin{tabular}{|cc|cc|cc|cc|cc|cc|cc|cc|}
\hline
\multicolumn{2}{|c|}{\multirow{2}{*}{}} & \multicolumn{2}{c|}{$\kappa=100$} & \multicolumn{2}{c|}{$\kappa=150$} & \multicolumn{2}{c|}{$\kappa=200$} & \multicolumn{2}{c|}{$\kappa=250$} & \multicolumn{2}{c|}{$\kappa=300$} & \multicolumn{2}{c|}{$\kappa=350$} & \multicolumn{2}{c|}{$\kappa=400$} \\ \cline{3-16} 
\multicolumn{2}{|c|}{} & \multicolumn{1}{c|}{bs} & $\mu$ & \multicolumn{1}{c|}{bs} & $\mu$ & \multicolumn{1}{c|}{bs} & $\mu$ & \multicolumn{1}{c|}{bs} & $\mu$ & \multicolumn{1}{c|}{bs} & $\mu$ & \multicolumn{1}{c|}{bs} & $\mu$ & \multicolumn{1}{c|}{bs} & $\mu$ \\ \hline
\multicolumn{1}{|c|}{\multirow{4}{*}{$N=6\kappa+1$}} & SGL-1 & \multicolumn{1}{c|}{64} & 0 & \multicolumn{1}{c|}{128} & 1e-6 & \multicolumn{1}{c|}{64} & 0 & \multicolumn{1}{c|}{128} & 1e-6 & \multicolumn{1}{c|}{64} & 1e-5 & \multicolumn{1}{c|}{256} & 1e-6 & \multicolumn{1}{c|}{256} & 1e-6 \\ \cline{2-16} 
\multicolumn{1}{|c|}{} & SGL-2 & \multicolumn{1}{c|}{128} & 1e-5 & \multicolumn{1}{c|}{128} & 1e-5 & \multicolumn{1}{c|}{128} & 1e-6 & \multicolumn{1}{c|}{128} & 1e-6 & \multicolumn{1}{c|}{128} & 1e-4 & \multicolumn{1}{c|}{128} & 0 & \multicolumn{1}{c|}{64} & 1e-4 \\ \cline{2-16} 
\multicolumn{1}{|c|}{} & SGL-3 & \multicolumn{1}{c|}{128} & 1e-4 & \multicolumn{1}{c|}{64} & 1e-4 & \multicolumn{1}{c|}{128} & 1e-4 & \multicolumn{1}{c|}{128} & 1e-4 & \multicolumn{1}{c|}{128} & 1e-4 & \multicolumn{1}{c|}{128} & 1e-4 & \multicolumn{1}{c|}{128} & 1e-4 \\ \cline{2-16} 
\multicolumn{1}{|c|}{} & MGDL & \multicolumn{1}{c|}{64} & 1e-6 & \multicolumn{1}{c|}{64} & 1e-6 & \multicolumn{1}{c|}{128} & 1e-6 & \multicolumn{1}{c|}{128} & 1e-6 & \multicolumn{1}{c|}{128} & 1e-6 & \multicolumn{1}{c|}{128} & 1e-6 & \multicolumn{1}{c|}{128} & 1e-6 \\ \hline
\multicolumn{1}{|c|}{\multirow{4}{*}{$N=12\kappa+1$}} & SGL-1 & \multicolumn{1}{c|}{128} & 1e-6 & \multicolumn{1}{c|}{128} & 0 & \multicolumn{1}{c|}{256} & 1e-6 & \multicolumn{1}{c|}{256} & 0 & \multicolumn{1}{c|}{256} & 0 & \multicolumn{1}{c|}{128} & 1e-6 & \multicolumn{1}{c|}{256} & 1e-6 \\ \cline{2-16} 
\multicolumn{1}{|c|}{} & SGL-2 & \multicolumn{1}{c|}{128} & 0 & \multicolumn{1}{c|}{256} & 1e-6 & \multicolumn{1}{c|}{256} & 1e-6 & \multicolumn{1}{c|}{256} & 1e-6 & \multicolumn{1}{c|}{256} & 0 & \multicolumn{1}{c|}{256} & 0 & \multicolumn{1}{c|}{256} & 1e-6 \\ \cline{2-16} 
\multicolumn{1}{|c|}{} & SGL-3 & \multicolumn{1}{c|}{128} & 1e-4 & \multicolumn{1}{c|}{256} & 1e-4 & \multicolumn{1}{c|}{256} & 1e-4 & \multicolumn{1}{c|}{256} & 1e-4 & \multicolumn{1}{c|}{256} & 1e-4 & \multicolumn{1}{c|}{256} & 1e-4 & \multicolumn{1}{c|}{256} & 1e-4 \\ \cline{2-16} 
\multicolumn{1}{|c|}{} & MGDL & \multicolumn{1}{c|}{64} & 0 & \multicolumn{1}{c|}{64} & 0 & \multicolumn{1}{c|}{128} & 0 & \multicolumn{1}{c|}{128} & 0 & \multicolumn{1}{c|}{128} & 0 & \multicolumn{1}{c|}{128} & 0 & \multicolumn{1}{c|}{128} & 0 \\ \hline
\end{tabular}}
\caption{Batch size (bs) and regularization parameter ($\mu$) for SGL-1, SGL-2, SGL-3 and MGDL.}
\label{hyper_parameter}
\end{table}

We compared performance of the DNN models to that of the traditional collocation method using continuous piecewise linear functions and continuous piecewise quadratic functions as bases. For details of the collocation method, the readers are referred to \cite{Atkinson, 2015Multiscale}. We now describe the collocation method using the continuous piecewise polynomial of degree $d$ as the basis for $d=1,2$. For each chosen $\kappa$, let $N_\kappa$ be equal to $6\kappa+1$ or $12\kappa+1$. For each $l\in \mathbb{N}_{N_\kappa}$, the basis function $\phi_l\in C(I)$ is defined to be a polynomial of degree $d$ in the interval $[x_{jd+1},x_{(j+1)d+1}]$ for each $j\in \mathbb{Z}_{(N_\kappa-1)/d}$ and satisfies $\phi_l(x_{j})=\delta_{j,l}$ for any $j\in \mathbb{N}_{N_\kappa}$, where $\delta_{j,l}:=1$, for $j=l$, and $0$ otherwise, for $j,l\in \mathbb{N}_{N_\kappa}$. The collocation method for solving the integral equation 
\eqref{fredholm_equation_operator} with the described bases leads to the algebraic system
$$
\mathbf{G}\mathbf{t}=\mathbf{f},\ 
\mbox{where}
\  \mathbf{G}:=[((\mathcal{I}-\lambda \mathcal{K})\phi_l)(x_j): j,l\in \mathbb{N}_{N_\kappa}], \ 
\mathbf{t}:=[t_j:j\in \mathbb{N}_{N_\kappa}]^\top, \   \mathbf{f}:=[f(x_j): j\in \mathbb{N}_{N_\kappa}]^\top.
$$
By solving the above system, we obtain the coefficients $\mathbf{t}^*:=[t^*_j:j\in \mathbb{N}_{N_\kappa}]^\top$, which gives rise to  the collocation solution  $\hat{y}_d:=\sum_{j=1}^{N_\kappa} t_j^* \phi_j$. In our discussion to follow, we use CM1 and CM2 for the collocation method with piecewise polinomials of degree $d=1$ and $d=2$, respectively.


Relative errors of approximate solutions for all methods are summarized in Table \ref{exp1_compare_result} for wavenumbers $\kappa:=100, 150, 200, 250, 300, 350, 400$ and sample sizes $N_\kappa:=6\kappa+1, 12\kappa+1$. Comparing the three single-grade models SGL-1, SGL-2 and SGL-3, and the multi-grade model MGDL with the two collocation methods, we find that MGDL significantly outperforms all other methods for all wavenumbers and all sample sizes. Among the three single-grade models,  SGL-2 performs the best. Moreover, SGL-2 outperforms CM1 for all wavenumbers and all sample sizes and has slight larger errors than CM2 for most  wavenumbers and sample sizes. However, model SGL-3, which is deeper than SGL-2, performs worse than SGL-2 for all cases, against the expectation that as the depth of the network increases, the expressive power of the neural network should improve. This may be due to the reason that as the neural network becomes deeper, the resulting optimization problem is more difficult to solve, leading to decline in the overall accuracy of the model. Moreover, as it will be shown later in this section, the single-grade deep learning model may suffer from the spectrum bias phenomenon when it is applied to solve an oscillatory integral equation. This serves as motivation for the development of multi-grade learning model.


\begin{table}[!ht]
\centering
\begin{tabular}{|cc|c|c|c|c|c|c|c|}
\hline
\multicolumn{2}{|c|}{} & $\kappa=100$ & $\kappa=150$ & $\kappa=200$ & $\kappa=250$ & $\kappa=300$ & $\kappa=350$ & $\kappa=400$ \\ \hline
\multicolumn{1}{|c|}{\multirow{6}{*}{$N_\kappa=6\kappa+1$}} & CM1 & 1.16e-2 & 1.15e-2 & 1.15e-2 & 1.15e-2 & 1.15e-2 & 1.15e-2 & 1.15e-2 \\ 
\multicolumn{1}{|c|}{} & CM2 & 1.67e-3 & 1.67e-3 & 1.67e-3 & 1.66e-3 & 1.66e-3 & 1.66e-3 & 1.66e-3 \\ \cline{2-9} 
\multicolumn{1}{|c|}{} & SGL-1 & 1.11e-2 & 8.82e-1 & 9.82e-1 & 9.83e-1 & 9.01e-1 & 9.48e-1 & 9.83e-1 \\ 
\multicolumn{1}{|c|}{} & SGL-2 & 1.49e-3 & 1.29e-3 & 1.67e-3 & 1.94e-3 & 3.75e-3 & 3.36e-3 & 5.54e-3 \\ 
\multicolumn{1}{|c|}{} & SGL-3 & 8.11e-3 & 6.91e-3 & 6.80e-3 & 6.61e-3 & 4.92e-3 & 5.92e-3 & 5.38e-3 \\ \cline{2-9} 
\multicolumn{1}{|c|}{} & MGDL & \textbf{4.23e-4} & \textbf{3.09e-4} & \textbf{3.87e-4} & \textbf{3.20e-4} & \textbf{3.05e-4} & \textbf{3.53e-4} & \textbf{3.91e-4} \\ \hline
\multicolumn{1}{|c|}{\multirow{6}{*}{$N_\kappa=12\kappa+1$}} & CM1 & 2.94e-3 & 2.91e-3 & 2.89e-3 & 2.89e-3 & 2.89e-3 & 2.89e-3 & 2.90e-3 \\ 
\multicolumn{1}{|c|}{} & CM2 & 2.10e-4 & 2.10e-4 & 2.10e-4 & 2.09e-4 & 2.09e-4 & 2.09e-4 & 2.08e-4 \\ \cline{2-9} 
\multicolumn{1}{|c|}{} & SGL-1 & 1.22e-1 & 3.17e-1 & 9.81e-1 & 9.82e-1 & 9.82e-1 & 9.83e-1 & 9.84e-1 \\ 
\multicolumn{1}{|c|}{} & SGL-2 & 7.36e-4 & 5.93e-4 & 6.17e-4 & 7.01e-4 & 7.73e-4 & 7.71e-4 & 8.05e-4 \\ 
\multicolumn{1}{|c|}{} & SGL-3 & 3.91e-3 & 3.43e-3 & 3.36e-3 & 3.57e-3 & 3.43e-3 & 3.71e-3 & 4.20e-3 \\ \cline{2-9} 
\multicolumn{1}{|c|}{} & MGDL & \textbf{1.30e-4} & \textbf{8.20e-5} & \textbf{7.81e-5} & \textbf{5.85e-5} & \textbf{4.32e-5} & \textbf{4.54e-5} & \textbf{4.54e-5} \\ \hline
\end{tabular}
\caption{The relative error for CM1, CM2, SGL-1, SGL-2, SGL-2 and MGDL. }
\label{exp1_compare_result}
\end{table}

We further compare the performance of the single-grade learning model and multi-grade learning model for the case when $\kappa=350$ and $N_\kappa=12\kappa+1$. Specifically, we tabulated in Table \ref{error_of_model}  the training loss, the validation loss and the relative error for models SGL-1, SGL-2, SGL-3 and MGDL. We also plot in Figure \ref{figure:training_error} the training loss and the validation loss against the number of epochs. Furthermore, we display in Figure \ref{figure:absolute_value} the absolute error in the time domain between the exact solution and the approximate solutions generated by SGL-3 and grades of MGDL whose network architecture matches that of SGL-3. It is clear that MGDL generates a more accurate approximate solution than SGL-3.


\begin{figure*}[!ht]
    \centering
    \subfigure[SGL-1.]{
		\begin{minipage}{0.45\textwidth}
			\includegraphics[width=1\textwidth]{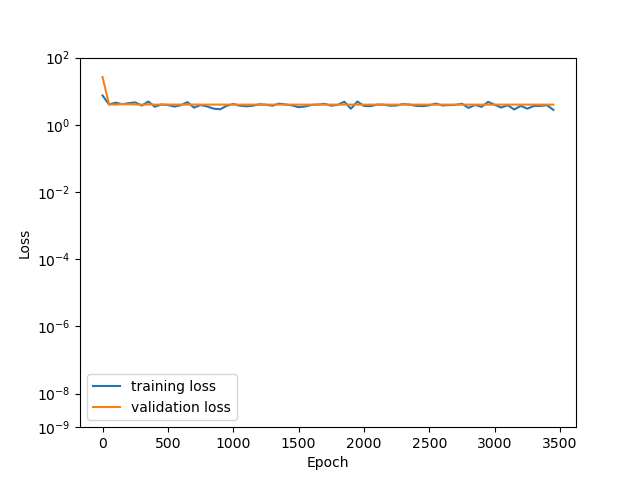}
		\end{minipage}
		\label{subfigure:single_train_error_SGL1}
	}
 \subfigure[SGL-2.]{
		\begin{minipage}{0.45\textwidth}
			\includegraphics[width=1\textwidth]{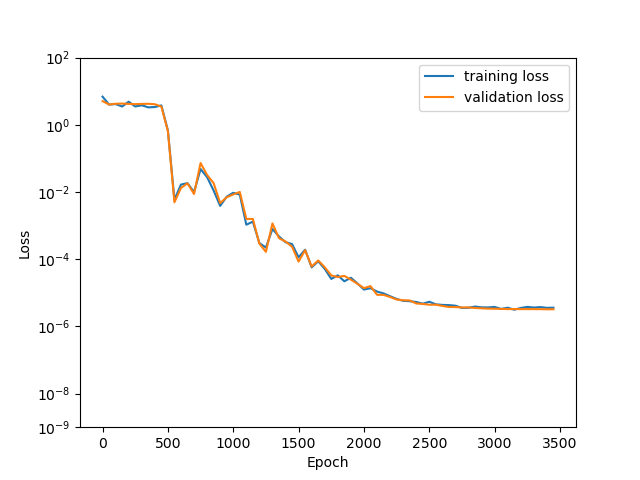}
		\end{minipage}
		\label{subfigure:single_train_error_SGL2}
	}
    \subfigure[SGL-3.]{
		\begin{minipage}{0.45\textwidth}
			\includegraphics[width=1\textwidth]{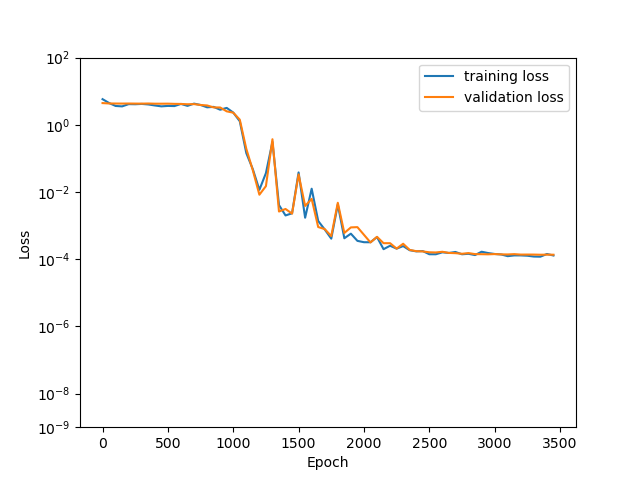}
		\end{minipage}
		\label{subfigure:single_train_error_SGL3}
	}
   \subfigure[MGDL.]{
		\begin{minipage}{0.45\textwidth}
			\includegraphics[width=1\textwidth]{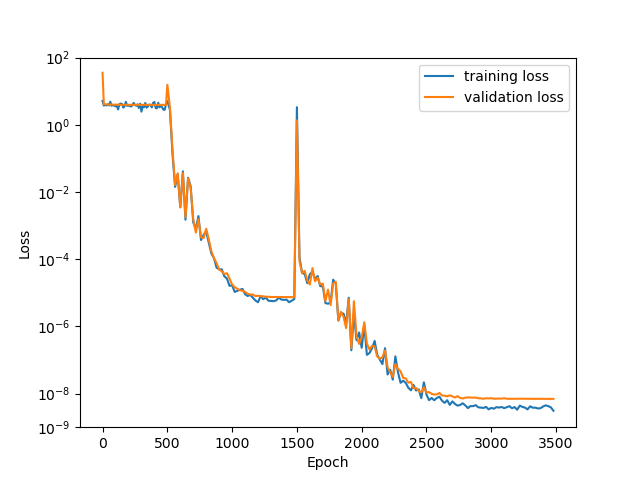}
		\end{minipage}
		\label{subfigure:multi_train_error}
	}

    \caption{The training loss and validation loss for SGL-1, SGL-2, SGL-3 and MGDL for the case $\kappa=350, N_\kappa=12\kappa+1$.}
    \label{figure:training_error}
\end{figure*}

\begin{table}[!ht]
\centering
\begin{tabular}{c||c|c|c||c|c|c}
\hline
 & SGL-1 & SGL-2 & SGL-3 & Grade 1 & Grade 2 & Grade 3 \\ \hline
Training loss & 3.83 & 2.51e-6 & 7.19e-5 & 3.61 & 6.31e-6 & 3.34e-9 \\ 
Validation loss & 3.98 & 2.89e-6 & 6.83e-5 & 3.93 & 8.87e-6 & 6.88e-9 \\ 
Relative error & 9.83e-1 & 7.71e-4 & 3.71e-3 & 9.81e-1 & 1.19e-3 & 4.54e-5 \\ \hline
\end{tabular}
\caption{Training loss, validation loss and relative error of the solution for SGL-1, SGL-2, SGL-3 and MGDL for the case $\kappa=350, N_\kappa=12\kappa+1$.}
\label{error_of_model}
\end{table}

\begin{figure*}[!ht]
    \centering
    \subfigure[The real part for SGL-3.]{
    	\begin{minipage}{0.45\textwidth}
           \includegraphics[width=1\textwidth]{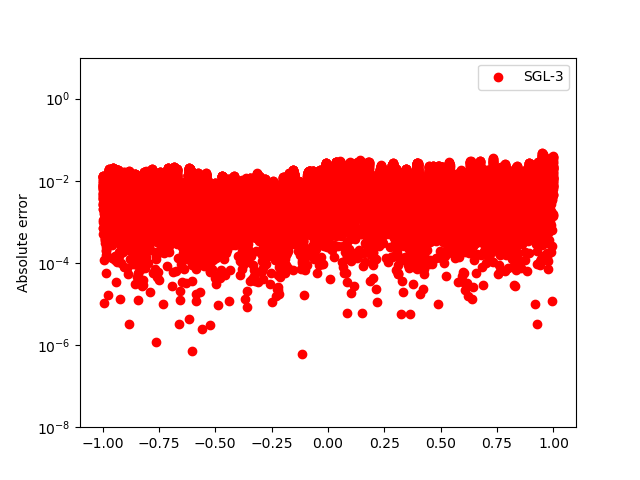}
           \label{real_single}
    	\end{minipage}
      }
      \hfil
    \subfigure[The real part for  MGDL.]{
    	\begin{minipage}{0.45\textwidth}
           \includegraphics[width=1\textwidth]
           {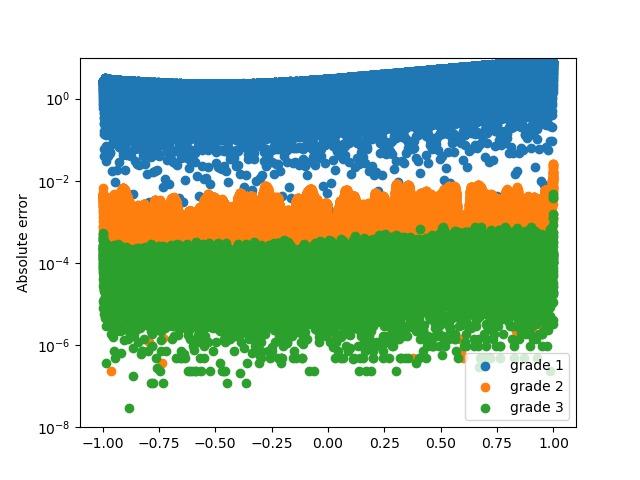}
           \label{real_multi}
           
    	\end{minipage}
    }
   \qquad
   \subfigure[The  imaginary part for SGL-3.]{
    	\begin{minipage}{0.45\textwidth}
           \includegraphics[width=1\textwidth]
           {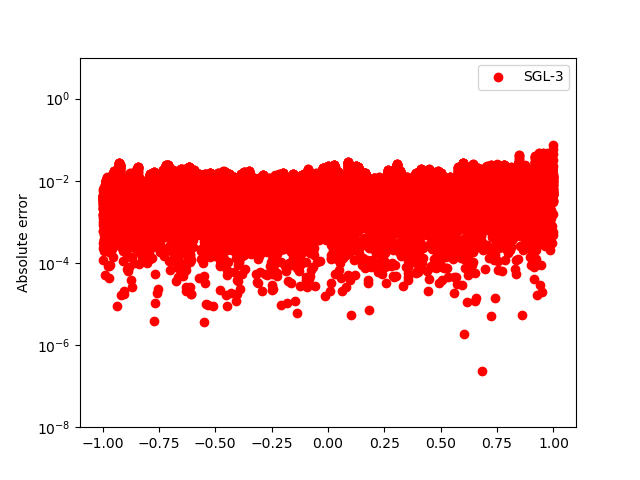}
           \label{imag_single}
    	\end{minipage}
      }
      \hfil
    \subfigure[The imaginary part for MGDL.]{
    	\begin{minipage}{0.45\textwidth}
           \includegraphics[width=1\textwidth]
           {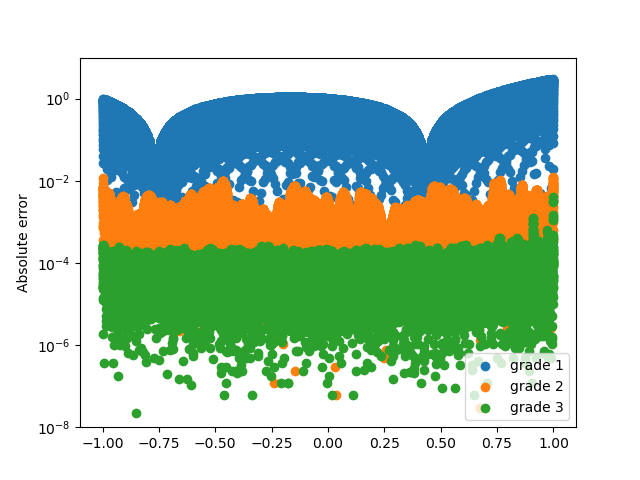}
           \label{imag_multi}
    	\end{minipage}
    }
    \caption{Absolute errors of the approximate solutions of SGL-3 and MGDL at $s_j:=-1+j/10000$, $j\in \mathbb{Z}_{20001}$ for the case $\kappa:=350, N_\kappa:=12\kappa+1$.}
    \label{figure:absolute_value}
\end{figure*}

\begin{figure*}[!ht]
   \centering
    \subfigure[Real part of $\mathcal{T}\mathbf{f}_1$.]{
    	\begin{minipage}{0.3\textwidth}
           \includegraphics[width=1\textwidth]{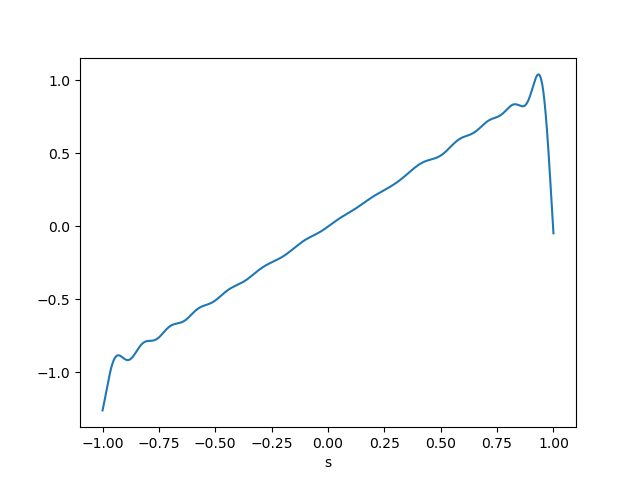}
    	\end{minipage}
     \label{subfigure:grade1_time_real}
      }
      \hfil
   \subfigure[Real part of $\mathcal{T}\mathbf{f}_2$.]{
		\begin{minipage}{0.3\textwidth}
			\includegraphics[width=1\textwidth]{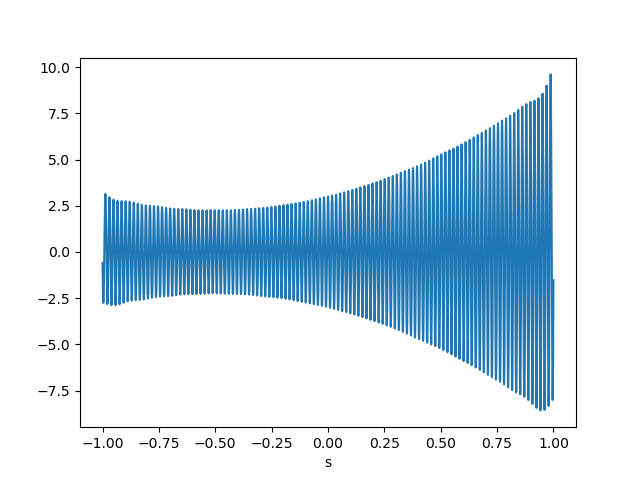}
		\end{minipage}
	}
 \hfil
   \subfigure[Real part of $\mathcal{T}\mathbf{f}_3$.]{
		\begin{minipage}{0.3\textwidth}
			\includegraphics[width=1\textwidth]{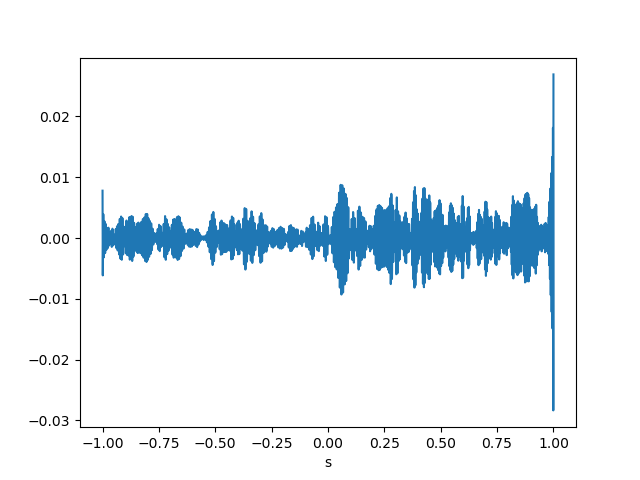}
		\end{minipage}
	}
    \subfigure[Imaginary part of $\mathcal{T}\mathbf{f}_1$.]{
    	\begin{minipage}{0.3\textwidth}
           \includegraphics[width=1\textwidth]{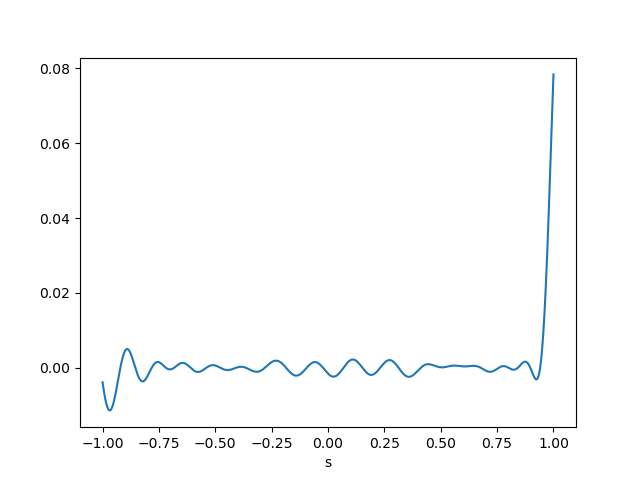}
           \label{subfigure:grade1_time_img}
    	\end{minipage}
      }
      \hfil
   \subfigure[Imaginary part of $\mathcal{T}\mathbf{f}_2$.]{
		\begin{minipage}{0.3\textwidth}
			\includegraphics[width=1\textwidth]{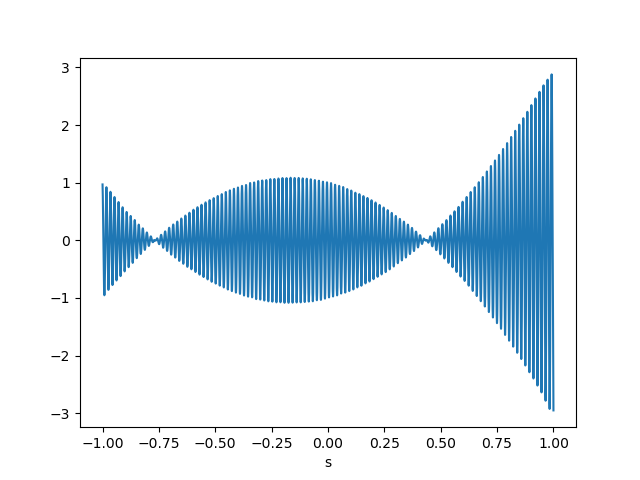}
		\end{minipage}
	}
 \hfil
   \subfigure[Imaginary part of $\mathcal{T}\mathbf{f}_3$.]{
		\begin{minipage}{0.3\textwidth}
			\includegraphics[width=1\textwidth]{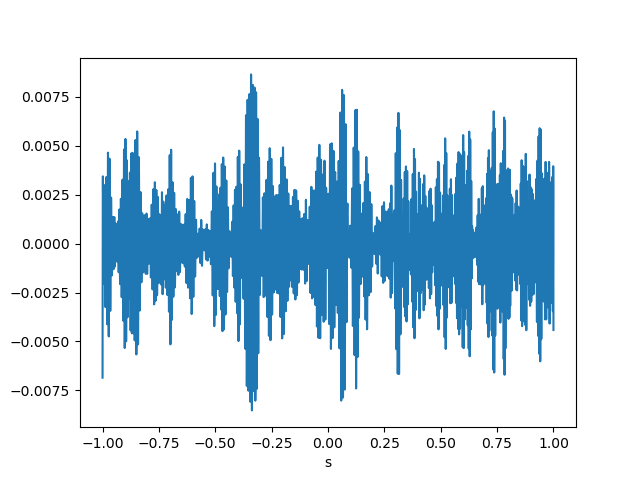}
		\end{minipage}
	}
    \caption{Grade components of MGDL for the case $\kappa=350, N_\kappa=12\kappa+1$.}
    \label{figure:solution_from_model}
\end{figure*}

Figure \ref{figure:solution_from_model} plots different solution components generated by different grades of the MGDL model. It illustrates that the MGDL model can effectively extract the intrinsic multiscale information hidden in the oscillatory solution of the integral equation. This well explains why the MGDL model can overcome the spectral bias from which single-grade learning models suffer.

\begin{figure*}
 \includegraphics[width=1\textwidth]{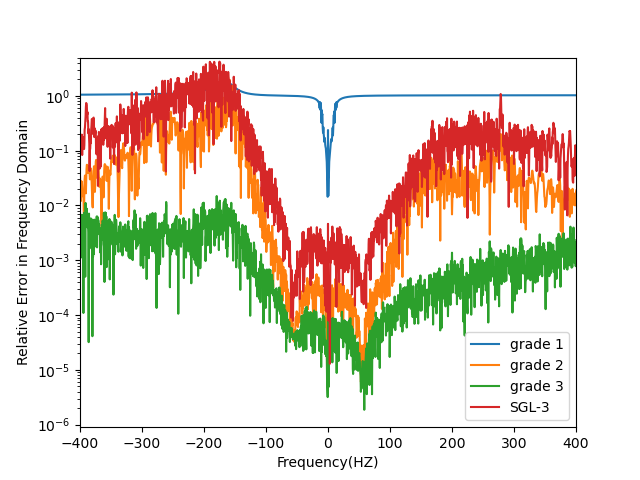}
     \caption{Relative errors of the approximate solutions generated by  SGL-3 and grades 1, 2, 3 of MGDL in the frequency domain
     for the case $\kappa=350, N_\kappa=12\kappa+1$.}
     \label{figure:freq_re}
\end{figure*}

To close this section, we show how the MGDL model improves approximation grade-by-grade looking from the frequency domain. To this end, we denote by $(\mathcal{F}v)(z)$ the fast Fourier transform of a vector $v$, where $z$ denotes the frequency variable. We then use 
\begin{equation*}
    \frac{|\mathcal{F}([y(s_j):j\in \mathbb{N}_{20001}]^T)(z)-\mathcal{F}([Y(s_j):j\in \mathbb{N}_{20001}]^T)(z)|}{|\mathcal{F}([y(s_j):j\in \mathbb{N}_{20001}]^T)(z)|}
\end{equation*}
to compute the relative error of an approximate solution $Y$ at the frequency $z\in \{0.5j-5000.5:j\in \mathbb{N}_{20001}\}$, where $y$ denotes the exact solution and $s_j:=-1+\frac{2j-2}{20000}$, for $j\in \mathbb{N}_{20001}$.
Here, $Y=\tilde{y}^*$ for the single-grade learning model and $Y=Y_l:=\sum_{d=1}^l\mathcal{T}\mathbf{f}_d$ for $l\in \mathbb{N}_L$ for the $l$-th grade solution of the $L$-grade learning model. In Figure \ref{figure:freq_re}, we plot the relative errors between the frequency of the exact solution and that of approximate solutions for SGL-3 and MGDL. Figure \ref{figure:freq_re} demonstrates that the single-grade learning model favours low frequency components.  When the frequency greater than 100, the single-grade learning model gives large errors, which are larger than the errors for the grade 2 solution $Y_2$. On the contrary, errors of the MGDL model reduce grade-by-grade across all frequency levels and for frequencies higher than 100, the errors for the grade 3 solution $Y_3$ reduce remarkably. This pinpoints that the higher grade can capture the high-frequency component of the solution and thus, the MGDL model can overcome the spectral bias from which the single-grade learning suffers.



\section{Conclusive Remarks}
We developed a DNN method for the numerical solution of the oscillatory Fredholm integral equation of the second. The proposed  methodology includes two major components: the numerical quadrature scheme that tailors to computing oscillatory integrals in the context of DNNs and the multi-grade deep learning model that aims at overcoming the spectral bias issue of neural networks. We established the error of the single-grade neural network approximate solution of the equation bounded by the training loss and the quadrature error. We demonstrated by numerical examples that the multi-grade deep learning model is effective in extracting multiscale information of the oscillatory solution and overcoming the spectral bias issue from which the traditional single-grade learning model suffers. 

\bigskip

\noindent{\bf Acknowledgement:} Y. Xu is supported in part by the US National Science Foundation under grant DMS-2208386 and by the US National Institutes of Health under grant R21CA263876. Send all correspondence to Y. Xu.
\bibliographystyle{unsrt}  
\bibliography{REF}

\begin{thebibliography}{10}

\bibitem{2010The}
Hermann Brunner, Arieh Iserles, and Syvert~P. N{\o}rsett.
\newblock The spectral problem for a class of highly oscillatory {F}redholm
  integral operators.
\newblock {\em IMA Journal of Numerical Analysis}, 30(1):108--130, 2010.

\bibitem{2003Inverse}
David~L Colton and Rainer Kress.
\newblock {\em Inverse acoustic and electromagnetic scattering theory},
  volume~93.
\newblock Springer, New York, 1998.

\bibitem{Atkinson}
Kendall~E. Atkinson.
\newblock {\em The Numerical Solution of Integral Equations of the Second
  Kind}.
\newblock Cambridge University Press, Cambridge, 1997.

\bibitem{2015Multiscale}
Zhongying Chen, Charles~A Micchelli, and Yuesheng Xu.
\newblock {\em Multiscale methods for {F}redholm integral equations},
  volume~28.
\newblock Cambridge University Press, Cambridge, 2015.

\bibitem{wang2015oscillation}
Yinkun Wang and Yuesheng Xu.
\newblock Oscillation preserving {G}alerkin methods for {F}redholm integral
  equations of the second kind with oscillatory kernels.
\newblock {\em arXiv preprint arXiv:1507.01156}, 2015.

\bibitem{Xiang2023}
Shuhuang Xiang and Qingyang Zhang.
\newblock Asymptotics on the fredholm integral equation with a highly
  oscillatory and weakly singular kernel.
\newblock {\em Applied Mathematics and Computation}, 456:128112, 2023.

\bibitem{raissi2018deep}
Maziar Raissi.
\newblock Deep hidden physics models: Deep learning of nonlinear partial
  differential equations.
\newblock {\em The Journal of Machine Learning Research}, 19(1):932--955, 2018.

\bibitem{raissi2019physics}
Maziar Raissi, Paris Perdikaris, and George~E Karniadakis.
\newblock Physics-informed neural networks: A deep learning framework for
  solving forward and inverse problems involving nonlinear partial differential
  equations.
\newblock {\em Journal of Computational physics}, 378:686--707, 2019.

\bibitem{2018The}
Bing Yu et~al.
\newblock The deep {R}itz method: {A} deep learning-based numerical algorithm
  for solving variational problems.
\newblock {\em Communications in Mathematics and Statistics}, 6(1):1--12, 2018.

\bibitem{Brunner2015}
Hermann Brunner, Y.~Ma, and Y.~Xu.
\newblock The oscillation of solutions of volterra integral and
  integro-differential equations with highly oscillatory kernels.
\newblock {\em Journal of Integral Equations and Applications}, 27(4):455--487,
  2015.

\bibitem{Rahaman2019}
Nasim Rahaman, Aristide Baratin, Devansh Arpit, Felix Draxler, Min Lin, Fred~A.
  Hamprecht, Yoshua Bengio, and Aaron Courville.
\newblock On the spectral bias of neural networks.
\newblock {\em Proceedings of the 36 th International Conference on Machine
  Learning, Long Beach, California, PMLR 97}, 2019.

\bibitem{Arieh2005Efficient}
Arieh Iserles and Syvert~P N{\o}rsett.
\newblock Efficient quadrature of highly oscillatory integrals using
  derivatives.
\newblock {\em Proceedings of the Royal Society A: Mathematical, Physical and
  Engineering Sciences}, 461(2057):1383--1399, 2005.

\bibitem{1960A}
EA~Flinn.
\newblock A modification of {F}ilon's method of numerical integration.
\newblock {\em Journal of the ACM}, 7(2):181--184, 1960.

\bibitem{2004Stability}
Penny~J Davies and Dugald~B Duncan.
\newblock Stability and convergence of collocation schemes for retarded
  potential integral equations.
\newblock {\em SIAM Journal on Numerical Analysis}, 42(3):1167--1188, 2004.

\bibitem{2011Asymptotic}
Haiyong Wang and Shuhuang Xiang.
\newblock Asymptotic expansion and {F}ilon-type methods for a {V}olterra
  integral equation with a highly oscillatory kernel.
\newblock {\em IMA Journal of Numerical Analysis}, 31(2):469--490, 2011.

\bibitem{MaXu2018}
Yunyun Ma and Yuesheng Xu.
\newblock Computing integrals involved the gaussian function with a small
  standard deviation.
\newblock {\em Journal of Scientific Computing}, 78:1744--1767, 2018.

\bibitem{2015Computing}
Yunyun Ma and Yuesheng Xu.
\newblock Computing highly oscillatory integrals.
\newblock {\em Mathematics of Computation}, 87(309):309--345, 2018.

\bibitem{1997Analysis}
David Levin.
\newblock Analysis of a collocation method for integrating rapidly oscillatory
  functions.
\newblock {\em Journal of Computational and Applied Mathematics},
  78(1):131--138, 1997.

\bibitem{Olver2009GMRES}
Sheehan Olver.
\newblock Gmres for the differentiation operator.
\newblock {\em SIAM Journal on Numerical Analysis}, 47(5):3359--3373, 2009.

\bibitem{2010Fast}
Sheehan Olver.
\newblock Fast, numerically stable computation of oscillatory integrals with
  stationary points.
\newblock {\em BIT Numerical Mathematics}, 50:149--171, 2010.

\bibitem{1984Asymptotic}
John~P Boyd.
\newblock Asymptotic coefficients of {H}ermite function series.
\newblock {\em Journal of Computational Physics}, 54(3):382--410, 1984.

\bibitem{Xu:2023aa}
Yuesheng Xu.
\newblock Multi-grade deep learning.
\newblock {\em arXiv preprint arXiv:2302.00150}, 02 2023.

\bibitem{XuZeng2023}
Y.~Xu and T.~Zeng.
\newblock Multi-grade deep learning for partial differential equations with
  applications to the burgers equation.
\newblock {\em arXiv preprint arXiv:2309.07401}, 2023.

\bibitem{Xu:2021aa}
Yuesheng Xu and Haizhang Zhang.
\newblock Convergence of deep convolutional neural networks.
\newblock {\em Neural Networks}, 153:553--563, 07 2022.

\bibitem{XuZhang2023}
Y.~Xu and H.~Zhang.
\newblock Convergence of deep relu networks.
\newblock {\em Neurocomputing}, page 127174, 2023.

\bibitem{he2015delving}
Kaiming He, Xiangyu Zhang, Shaoqing Ren, and Jian Sun.
\newblock Delving deep into rectifiers: {S}urpassing human-level performance on
  imagenet classification.
\newblock In {\em Proceedings of the IEEE International Conference on Computer
  Vision}, pages 1026--1034, 2015.

\bibitem{davis2007methods}
Philip~J Davis and Philip Rabinowitz.
\newblock {\em Methods of numerical integration}.
\newblock Courier Corporation, New York, 2007.

\bibitem{kingma2014adam}
Diederik~P Kingma and Jimmy Ba.
\newblock Adam: {A} method for stochastic optimization.
\newblock {\em arXiv preprint arXiv:1412.6980}, 2014.

\bibitem{zeng2022sparse}
Y.~Xu and T.~Zeng.
\newblock Sparse deep neural network for nonlinear partial differential
  equations.
\newblock {\em Numerical Mathematics: Theory, Methods and Applications}, 16(1),
  2022.

\end{thebibliography}

\end{document}